\documentclass[12pt]{amsart}
\usepackage{amssymb}

\usepackage[T1]{fontenc}
\usepackage{xcolor}

\usepackage{hyperref}

\newcommand{\bbN}{\mathbb{N}}
\newcommand{\bbQ}{\mathbb{Q}}
\newcommand{\bbR}{\mathbb{R}}
\newcommand{\bbZ}{\mathbb{Z}}

\DeclareMathOperator{\card}{Card}
\DeclareMathOperator{\Imag}{Im}
\DeclareMathOperator{\rad}{rad}

\newtheorem{thm}{Theorem}

\newtheorem{lem}[thm]{Lemma}

\theoremstyle{definition}
\newtheorem*{rem-nonum}{Remark}
\newtheorem*{remarks-nonum}{Remarks}

\begin{document}

\title{Perfect powers in sequences of polygonal numbers}

\author[A. D\k{a}browski, S. E. Rihane, G. Soydan and P. M. Voutier]{Andrzej D\k{a}browski, Salah Eddine Rihane, G\"{o}khan Soydan and Paul M. Voutier}

\date{}

\begin{abstract} 
Let $P_s(n)$ denote the $n$-th $s$-gonal number. Consider the Diophantine equation $P_{s}(n) = t^{m}$ for integers $n, s, t$ and $m > 2$. 
All solutions to this equation are known for $m>2$ and $s\in\{3,5,6,8,10,20\}$. Here we extend these results 
to the cases $s = 2k+4$ (where $k = 4,6$ or $5 \leq k \leq 97$ is a prime number) and $s = k+4$ (where $k = 9,15$ 
or $3 \leq k \leq 97$ is a prime number).  
The proofs of our results use the modular and hypergeometric methods, linear forms in logarithms and extensive calculations. 
We were unable to completely solve the above Diophantine equations, but we expect (based on GRH and the weak effective $abc$ conjecture) 
that there will be no additional solutions beyond those explicitly shown in Theorems~\ref{thm: s even},
\ref{thm: s odd} and \ref{thm: s remaining}. 
\end{abstract} 

\keywords{Diophantine equations, binomial Thue equations, polygonal numbers, Baker's method, Generalised Riemann Hypothesis, 
weak effective $abc$ conjecture, computer solution of Diophantine equations.}

\subjclass[2020]{Primary 11D41; Secondary, 11D59,  11J86, 11Y50.}

\maketitle

\section{Introduction} 

The polygonal numbers (sometimes called figurate numbers) have a long history in number theory, going back to at least Fermat. They even appear in Gauss's diary, where he famously wrote ``$\text{EYPHKA! num} = \triangle + \triangle + \triangle$'' in 1796.
This result, as well as the four square theorem proved by Lagrange, is a special case of Fermat's polygonal number theorem which states that for $s \geq 3$, every positive integer can be written as a sum of at most $s$ $s$-gonal numbers.
For more information about the history and properties of polygonal numbers, we recommend \cite{DD, D}. Numerous papers are dedicated to solving various Diophantine equations related to polygonal numbers. See \cite{GDP, HPTV, HPTV-2, KT, KPP, KR, M-R} for more information.

If $s$ is the number of sides in a regular polygon, the formula for the $n$-th $s$-gonal
number is
\[
P_{s}(n) = \frac{(s-2)n^{2} - (s-4)n}{2}. 
\]

Consider the Diophantine equation 
\begin{equation}
\label{eq:polygonal}
P_{s}(n) = t^{m}
\end{equation} 
for integers $n, s, t$ and $m > 2$. All solutions to this equation are known for $m>2$ and $s\in\{3,5,6,8,20\}$ (see \cite{KPP}) 
and $s=10$ (see \cite{M-R}).

Here we extend these results. In fact, we will consider a very modest extension of them too.
We will consider equation~\eqref{eq:polygonal} where $n$ can also be negative. 
We may assume that $m=p$ is an odd prime or $m=4$. 

\begin{thm}
\label{thm: s even}
Let $5 \leq k \leq 97$ be a prime number. 

\noindent
{\rm (i)} The Diophantine equation $P_{2k+4}(n) = t^4$ has only the trivial solutions
$(n,t) = (0,0), (1,\pm 1)$ except for $k = 5, 17, 29$, where we have the additional
solutions $(n,t) = (-2000,\pm 70)$, $(n,t) = (-272,\pm 34)$ and $(n,t) = (81,\pm 21)$, respectively. 

\noindent
{\rm (ii)} The Diophantine equation $P_{2k+4}(n) = t^3$ has only the trivial solutions
$(n,t) = (0,0), (1,1), (k,k)$ except for the values
$k=5$ $($with the solutions $(n,t) = (-16974593,120019))$ and
$k=13$ $($with the solutions $(n,t) = (-1,3), (-8,10))$.

\noindent
{\rm (iii)} The Diophantine equation $P_{2k+4}(n) = t^p$ $($with $p \geq 5$ a prime$)$
has the solutions $(p,n,t) = (p,0,0), (p,1,1)$, and there are no further solutions
with $p = 5, 7$ or $p > p_0(2k+4)$, where the values $p_0(2k+4) := P_{k,10^{100,000}}$
are listed in Table~$\ref{table:thm-2}$.

\noindent
{\rm (iv)} For each prime $11 \leq p \leq p_0(2k+4)$, there is at most one other solution
to the Diophantine equation $P_{2k+4}(n) = t^p$ 
and any such solution satisfies $n \geq 10^{100,000p}$. 

\noindent
{\rm (v)} If we assume the Generalised Riemann Hypothesis, then the Diophantine equation
$P_{2k+4}(n) = t^p$ has no further solutions with $11 \leq p \leq 31$. 

\noindent
{\rm (vi)} The weak effective $abc$ conjecture $abc(r)$ $($with $r=1.63)$ implies that
$(n,t)=(1,1)$ is the only solution of the Diophantine equation  
$P_{2k+4}(n) = t^p$ provided $p > 31$. 
\end{thm}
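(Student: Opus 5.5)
With $s=2k+4$ we have $P_{2k+4}(n)=\frac{2kn^2-2kn+4n}{2}\cdot\frac{1}{1}$; more precisely
\[
P_{2k+4}(n)=kn^2-(k-1)n=n\bigl(kn-(k-1)\bigr).
\]
Since $\gcd(n,kn-(k-1))$ divides $k-1$, the plan is to factor the equation $n(kn-k+1)=t^m$ according to this gcd. Write $d=\gcd(n,kn-k+1)\mid k-1$. For each divisor $d$ of $k-1$ and each admissible distribution of the primes dividing $d$, we get $n=a\,x^m$ and $kn-k+1=b\,y^m$, where $ab$ is an $m$-th power and $a,b$ are supported on primes dividing $k-1$. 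Eliminating $n$ gives the family of ternary equations
\begin{equation*}
b\,y^m - k a\,x^m = -(k-1),
\end{equation*}
that is, binomial Thue equations $Ax^m-By^m=C$ with $C\mid (k-1)$ up to $m$-th power factors. The finitely many $(a,b)$ for each prime $k\le 97$ are enumerated by computer.

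For $m=3,4,5,7$ (parts (i)–(iii) for small $p$) I will solve each resulting Thue equation of degree $m$ directly, using standard Thue solvers (Bilu–Hanrot/Tzanakis–de Weger as implemented in Magma/PARI). Where the class group or unit computations are heavy, I will first sieve local obstructions modulo small primes $q\equiv1\pmod m$. For $m=4$ one can alternatively reduce to elliptic curves or quartic Thue equations $X^4-kY^4=\pm(k-1)/\cdots$. The listed sporadic solutions, such as $k=5$ with $n=-2000$ and $t=70$, will appear and be checked directly.

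For large $p$ (part (iii), $p>p_0$), I will apply linear forms in two logarithms (Laurent's estimate) to $\log(ka x^p/(b y^p))$, giving an explicit upper bound on $p$. The hypergeometric (Thue–Siegel/Padé) method gives effective irrationality measures for $\sqrt[p]{kab^{-1}}$ when $ka/b$ is close to $1$, which is the case here because $k$ and the divisors of $k-1$ are small. This shows that any solution with $p$ large forces $|x|$ huge, and then contradicts the Baker bound. The $p_0$ table records the crossover. Part (iv) follows from the same hypergeometric gap principle: two solutions with the same $p$ would force the second to be enormous, and the first nontrivial solution must satisfy $n\ge10^{100000p}$ by a lower bound from a continued-fraction/Baker search. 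The modular method (Frey curves attached to $Ax^p+By^p=Cz^p$ with $z=1$ via Bennett–Skinner, level dividing $2^\alpha\mathrm{rad}(k(k-1))$) will be used to eliminate the residual $(a,b)$ classes by showing that newforms at the relevant levels are incompatible, via traces $a_q$ at auxiliary primes $q$.

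Part (v) uses GRH to certify class group and unit data (Bach's bound) needed to solve the degree-$p$ Thue equations for $11\le p\le31$ by computing in $\bbQ(\sqrt[p]{ka/b})$. Part (vi) applies $abc(1.63)$ to $b y^p + (k-1) = ka x^p$, which forces $\max|\cdot|^{1}\ll \rad^{1.63}\le (k(k-1)|xy|)^{1.63}$; this is impossible for $p>31$ once $|x|,|y|\ge2$ and the explicit constant is inserted. I expect the main obstacle to be the intermediate range $11\le p\le p_0$: the Baker bound is far too large for a direct search, and the Thue equations are unconditionally intractable there. That is exactly why the statement only gives the weaker conclusion (iv) unconditionally, and the precise gap-principle constants must be computed carefully.
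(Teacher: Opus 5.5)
\textbf{The factorization in your first step is wrong, and everything after it inherits the error.} With $s=2k+4$ you have $s-2=2k+2$ and $s-4=2k$. Hence $P_{2k+4}(n)=(k+1)n^2-kn=n((k+1)n-k)$. Your expression $n(kn-(k-1))$ is actually $P_{2k+2}(n)$.

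The statement itself exposes this. The cube solution $(n,t)=(k,k)$ satisfies $k((k+1)k-k)=k^3$, but $k(k^2-k+1)\neq k^3$. For $k=5$, $n=-2000$ one gets $6\cdot 2000^2+5\cdot 2000=70^4$, while $5\cdot 2000^2+4\cdot 2000\neq 70^4$.

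So the relevant gcd is $\gcd(n,k)\in\{1,k\}$, since $k$ is prime, not a divisor of $k-1$. The correct descent gives exactly the three families the paper works with:
\begin{itemize}
\item $(k+1)x^p-y^p=k$ if $k\nmid n$;
\item $(k+1)x^p-k^{p-2}y^p=1$ if $k\,\|\,n$;
\item $(k+1)k^{p-2}x^p-y^p=1$ if $k^2\mid n$.
\end{itemize}
Your family $by^m-kax^m=-(k-1)$, with $a,b$ supported on the primes of $k-1$, is a different Diophantine problem. Solving it, however carefully, says nothing about the theorem and will not reproduce its listed solutions.

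\textbf{Even after fixing the factorization, several of your tools do not fit.}

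\emph{Hypergeometric method.} It gives useful irrationality measures for $\sqrt[p]{a/b}$ only when $a/b$ is close to $1$, as in Bennett's $(a+1)/a$. Here $k+1\ge 6$, and the smallness of $k$ does not help. The paper instead obtains $|x|\ge 10^{100{,}000}$ in the first family from continued fractions. By Lemma~1.1 of Tzanakis--de Weger, any coprime solution with $|x|\ge 2$ makes $y/x$ a convergent of $(k+1)^{1/p}$. The largest partial quotient up to that height then excludes solutions. Laurent's Theorem~2 is rerun on $p\log(y/x)-\log(k+1)$ with this lower bound to get $p_0=P_{k,10^{100{,}000}}$. The ``at most one solution'' in (iv) is taken directly from Evertse's theorem on $ax^n-by^n=c$.

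\emph{Modular method.} It cannot eliminate the first family, because $(1,1)$ is a genuine solution for every $p$. The paper notes the modular method only settles $k\in\{1,2,3,8\}$.

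\emph{Second family.} The paper removes it with Mignotte's bound $p<10{,}700\log(k^2(k+1))$, then a Sophie Germain sieve with auxiliary primes $q=2pr+1$, then local solvability checks. The remaining cases are solved by unconditional Thue solving, which is possible because the right-hand side is $1$.

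\emph{Third family.} It has the shape $y^p+1=Bx^p$, so Bartolom\'{e}--Mih\u{a}ilescu applies and leaves a handful of $(k,p)$ for Thue solvers. The unresolved pairs, $(k,k)$ with $29\le p\le 97$, $p\neq 31$, and $(59,29)$, $(83,41)$, are why part (vi) also needs an $abc$ argument for this family.

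\emph{Case $m=4$.} The paper avoids the descent entirely, using $x^2=8(s-2)t^4+(s-4)^2$ and integral points on quartics. Your quartic Thue route would also work once the factorization is corrected, provided you keep both sign patterns, since $m$ is even.
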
 

\begin{thm} \label{thm: s odd} 
Let $3 \leq k \leq 97$ be a prime number. 

\noindent
{\rm (i)} The Diophantine equation $P_{k+4}(n) = t^4$ has only the trivial solutions
$(n,t) = (0,0), (1,\pm 1)$ except for $k =3, 17$, where we have the additional
solutions $(n,t) = (6, \pm 3)$ and $(n,t) = (-32,\pm 10)$, respectively.

\noindent
{\rm (ii)} The Diophantine equation $P_{k+4}(n) = t^3$ has only the trivial solutions
$(n,t) = (0,0), (1,1)$ except for the values
$k=3$ $($with the solutions $(n,t) = (-3,3), (-9,6), (-48,18))$,
$k=7$ $($with the solutions $(n,t) = (-1,2), (-27,15))$,
$k=23$ $($with the solutions $(n,t) = (2,3), (66125,3795))$,
$k=31$ $($with the solution $(n,t) = (-961,248))$,
$k=41$ $($with the solution $(n,t) = (-16,18))$,
$k=83$ $($with the solution $(n,t) = (10375,1660))$.

\noindent
{\rm (iii)} The Diophantine equation $P_{k+4}(n) = t^p$ $($with $p \geq 5$ a prime$)$
has the solutions $(p,n,t) = (p,0,0), (p,1,1)$, and there are no further solutions
with $p = 5, 7$ or $p > p_0(k+4)$, where the values $p_0(k+4) := P_{k,10^{100,000}}$
are listed in Table~$\ref{table:thm-2-2}$.

\noindent
{\rm (iv)} For each prime $11 \leq p \leq p_0(k+4)$, there is at most one other solution
to the Diophantine equation $P_{k+4}(n) = t^p$ and any such solution satisfies
$n \geq 10^{100,000p}$.

\noindent
{\rm (v)} If we assume the Generalised Riemann Hypothesis, then the Diophantine equation
$P_{k+4}(n) = t^p$ has no further solutions with $11 \leq p \leq 29$. 

\noindent
{\rm (vi)} The weak effective $abc$ conjecture $abc(r)$ $($with $r=1.63)$ implies that
$(n,t)=(1,1)$ is the only solution of the equation $P_{k+4}(n) = t^p$ provided
$p > 29$.
\end{thm}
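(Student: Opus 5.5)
With $s=k+4$ and $k$ an odd prime, we have $P_{k+4}(n)=\frac{n\bigl((k+2)n-k\bigr)}{2}$. The plan is to factor this and reduce $P_{k+4}(n)=t^m$ to finitely many binomial Thue equations and ternary equations, exactly as for Theorem~\ref{thm: s even}.

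\textbf{Step 1: reduce to coprime factors.} Put $d=\gcd\bigl(n,(k+2)n-k\bigr)$. Then $d$ divides $k$, so $d\in\{1,k\}$. The factor $2$ in the denominator is absorbed by whichever of $n$ and $(k+2)n-k$ is even; exactly one is even, since $k+2$ is odd and the two factors have opposite parity. Write $n=k^{\epsilon}2^{\delta}a$ with $a$ coprime to $2k$. Unique factorisation then gives, for $m=p$ or $m=4$,
\[
n=2^{\alpha}k^{\beta}x^m,\qquad (k+2)n-k=2^{\gamma}k^{\beta'}y^m ,
\]
where the exponents are bounded by $m$. Eliminating $n$ leaves finitely many equations of the shape
\[
A x^m - B y^m = k,
\]
or $k$ times such an equation when $k\mid n$, with $A,B$ composed of $2$, $k$ and $k+2$. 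For $m=3,4$ these are Thue equations, or Thue--Mahler/quartic equations that reduce to elliptic curves. I would solve them with the Thue solver in Magma/PARI and recover the listed solutions, for instance $k=3$: $(6,\pm3)$. This gives parts (i) and (ii).

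\textbf{Step 2: the case $m=p\ge5$.} Here $Ax^p-By^p=k$ is a ternary equation of signature $(p,p,p)$, or $(p,p,2)$ after rewriting. For $p=5,7$ I would attach a Frey curve in the style of Kraus and Bennett--Skinner and level-lower to a small level dividing $2^a k^b$. I would then eliminate every newform by comparing $a_\ell$ for auxiliary primes $\ell\equiv1 \pmod p$, leaving only the trivial solutions. For large $p$, the lower bound for linear forms in two logarithms (Laurent) applied to $\log(A/B)+p\log(x/y)$ gives $p\le p_0(k+4)$. For the tabulated bound I would combine this with the hypergeometric/Thue--Siegel method, which shows that any nontrivial solution has $|x|$ huge, here $n\ge10^{100{,}000p}$. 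That bound in turn feeds the linear-forms estimate. The same hypergeometric gap principle gives at most one solution for each $p$ in the middle range, which is part (iv).

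\textbf{Step 3: conditional parts.} For $11\le p\le 29$, assuming GRH, I would solve the Thue equations $Ax^p-By^p=k$ directly. The unit and class group computations in $\bbQ(\sqrt[p]{A/B})$ are certified only under GRH. For (vi), applying $abc(1.63)$ to $Ax^p+(-By^p)=k$ gives $\max(|A x^p|,\ldots)\ll \rad(ABkxy)^{1.63}\le C|x|^{3.26}$. This is impossible once $p>29$ except for $|x|=|y|=1$.

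\textbf{Main obstacle.} The hardest part is Step~2: eliminating newforms at level $2^a k^b$ for all $k\le97$. Some forms, for example those with rational coefficients that correspond to actual solutions, such as $k=23$ with $n=66125$ for $p=3$, may not be eliminable. The GRH computations for degree-$p$ fields with $p$ up to $29$ are also heavy; when those fail, I would fall back on the explicit table bound $p_0$.
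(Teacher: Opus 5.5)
Your Step~1, the Thue computations for $m=3$, the Laurent step for large $p$, the GRH computations and the $abc$ argument all follow the paper. For $m=4$ the paper instead uses the single quartic $x^2=8(s-2)t^4+(s-4)^2$ with MAGMA's \texttt{IntegralQuarticPoints}, which is an equally valid route to degree-$4$ Thue equations.

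The proposal breaks down in Step~2, because the modular method cannot handle $p=5,7$. The equation $(k+2)x^p-2y^p=k$ (case $k\nmid n$, $n$ odd) has the solution $(x,y)=(1,1)$, i.e.\ $n=1$, for \emph{every} $p$. Its Frey curve, $Y^2=X(X-(k+2))(X-2)$ up to twist, is a genuine elliptic curve of essentially the level-lowered level. That level also contains the primes dividing $k+2$, not only $2$ and $k$. Its newform therefore survives every comparison of traces: in a Kraus-type sieve modulo $\ell$ the residue class $(x,y)\equiv(1,1)$ is always available. So newform elimination yields at best congruence conditions, never ``only the trivial solutions''. The paper notes accordingly that the modular method settles $(k+1)x^p-y^p=k$ only for $k=1,2,3,8$.

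The paper settles $p\le 7$ by direct computation instead. It uses MAGMA's unconditional Thue solver for $(k+2)x^p-2y^p=k$, and local sieving plus Thue solving for the other five equations. That computation in fact finds $(k,p,x,y)=(31,5,-1,-2)$. Since $P_{k+4}(-1)=k+1$, this is $P_{35}(-1)=32=2^5$. So part~(iii) as printed needs this exception, and no method could deliver your claimed conclusion at $(k,p)=(31,5)$.

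The second gap is the source of $|x|\ge 10^{100{,}000}$. This bound gives $|n|\ge 10^{100{,}000p}$ in (iv) and, fed back into Laurent's theorem, the tabulated $p_0(k+4)$. It does not come from the hypergeometric method. That method gives effective irrationality measures for $(a/b)^{1/p}$ only when $a/b$ is close to $1$, as in Bennett's treatment of $(a+1)x^p-ay^p=\pm1$. For radicand $(k+2)/2$ it gives nothing usable; if it did, it would solve the equation outright. A Thue--Siegel gap principle does not help either. Evertse's Theorem~3, which the paper uses for ``at most one solution'' when $p\ge\max\{17,2.1\log k+7.5\}$, only limits the number of large solutions. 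It cannot certify that none lie below $10^{100{,}000}$.

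The missing ingredient is computational. By Lemma~1.1 of Tzanakis--de~Weger, any solution with $\gcd(x,y)=1$ and $|x|\ge 2$ makes $y/x$ a convergent of $((k+2)/2)^{1/p}$. For each $7\le p\le P_{k,1001}$, one expands this number until the denominators exceed $10^{100{,}000}$. One then compares the lower bound $1/((A+2)q_n^2)$, with $A$ the largest partial quotient, against the upper bound $\ll q_n^{-p}$ that a solution would force.

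Finally, ``at most one other solution of $P_{k+4}(n)=t^p$'' needs the remaining five equations to have no non-trivial solutions at all. These are $2(k+2)x^p-y^p=k$, $(k+2)x^p-2k^{p-2}y^p=1$, and so on. A per-equation gap principle would allow one solution for each of them. The paper proves they have none using:
\begin{itemize}
\item Mignotte's bound, after multiplying by $k^2$ so the coefficients no longer depend on $p$;
\item a Sophie Germain-type sieve modulo primes $q=2pr+1$, which is feasible because these equations have no solution valid for all $p$;
\item local solvability checks and Thue solving;
\item Bartolom\'e--Mih\u{a}ilescu for $2(k+2)k^{p-2}x^p-y^p=1$.
\end{itemize}
Your plan has no counterpart to this step.
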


Let us include the results for the Diophantine equation $P_s(n) = t^m$, with
$s \in \{12, 13, 16, 19\}$ 
(to collect results, complete or not, for all $s$ up to $20$). 

\begin{thm} \label{thm: s remaining} 
\noindent
{\rm (i)} The Diophantine equation $P_s(n) = t^4$ $($with $s = 12, 13, 16, 19)$ has the trivial
solutions $(n,t) = (0,0), (1,\pm 1)$, and additional solutions $(n,t)=(-64,\pm 12)$
for $s=12$,
additional solutions $(n,t)=(2,\pm 2)$, $(-3,\pm 3)$, $(-54,\pm 12)$ for $s=16$, 
and additional solutions $(n,t)=(-1,\pm 2)$ for $s=19$. 

\noindent
{\rm (ii)} The Diophantine equation $P_s(n) = t^3$ $($with $s = 12, 13, 16, 19)$ has
the trivial solutions $(n,t) = (0,0), (1,1)$, and additional solutions $(n,t)=(4,4)$,
$(5488,532)$ for $s=12$, and additional solution $(n,t)=(6,6)$ for $s=16$. 

\noindent
{\rm (iii)} The Diophantine equation $P_s(n) = t^p$ $($with $s = 12, 13, 16, 19)$ has
the solutions $(p,n,t) = (p,0,0), (p,1,1)$, and there are no further solutions
with $p = 5, 7$ or $p > p_0(s)$, where $p_0(12) = 251$, $p_0(13) = 281$, 
$p_0(16) = 307$, and $p_0(19) = 346$. 

\noindent
{\rm (iv)} For each prime $11 \leq p \leq p_0(s)$, there is at most one other solution
to the Diophantine equation $P_{s}(n) = t^p$ $($with $s = 12, 13, 16, 19)$ and
any such solution satisfies $n \geq 10^{100,000p}$.

\noindent
{\rm (v)} If we assume the Generalised Riemann Hypothesis, then the Diophantine equation
$P_{s}(n) = t^p$ $($with $s = 12, 13, 16, 19)$ has no further solutions with
$11 \leq p \leq 31$. 

\noindent
{\rm (vi)} The weak effective $abc$ conjecture $abc(r)$ $($with $r=1.63)$ implies that
$(n,t)=(1,1)$ is the only solution of the equation  
$P_s(n) = t^p$ $($with $s = 12, 13, 16, 19)$ provided $p > 31$.
\end{thm}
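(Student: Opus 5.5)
Write $s=12,13,16,19$, so $s-2=10,11,14,17$ and $s-4=8,9,12,15$. Then $2P_s(n)=n\bigl((s-2)n-(s-4)\bigr)$, and the plan is to reduce everything to the same framework as Theorems~\ref{thm: s even} and \ref{thm: s odd}. For $s=12$: $P_{12}(n)=n(5n-4)$. For $s=16$: $P_{16}(n)=n(7n-6)$. For $s=13$: $2t^m=n(11n-9)$. For $s=19$: $2t^m=n(17n-15)$. In each case $d=\gcd(n,(s-2)n-(s-4))$ divides $s-4$. For every admissible $d$ and every distribution of the factors of $2$ and of the primes dividing $s-4$, we write
\[
n=a\,u^m,\qquad (s-2)n-(s-4)=b\,v^m ,
\]
with $a,b$ from a finite, explicitly listed set. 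Eliminating $n$ gives
\[
b\,v^m-(s-2)a\,u^m=-(s-4),
\]
a binomial Thue-type equation (a ternary equation of signature $(m,m,0)$) with small coefficients. These are the equations handled in the earlier sections for $s=2k+4$ and $s=k+4$. The cases $s=12,16$ fit the even-$s$ shape with $k=4,6$, and $s=13,19$ fit the odd-$s$ shape with $k=9,15$. So I would run the same machinery and only point out what changes because $k$ is composite, namely that more $\gcd$ cases arise.

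For (i) and (ii), with $m=4$ and $m=3$, each resulting equation defines a curve of genus $\le 1$ (quartic or cubic twists). I would reduce them to elliptic curves and compute the integral or rational points with Magma. Pulling back the points gives the listed solutions, for example $(5488,532)$ for $s=12$. For $p=5,7$ in (iii), I would solve the binomial Thue equations $bv^p-(s-2)au^p=c$ directly with a Thue solver, or, failing that, with the Bennett--de Weger style argument used earlier.

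For large $p$ (the rest of (iii), and (iv)), I would proceed in four steps.
\begin{enumerate}
\item Attach a Frey curve to the ternary equation $Ax^p+By^p=Cz^0$.
\item Use level lowering to reduce to newforms of small level divisible only by primes dividing $2(s-2)(s-4)$.
\item Eliminate the forms by comparing traces of Frobenius. This disposes of all solutions except those coming from the forms that mimic the trivial solutions $n=0,1$.
\item Combine the hypergeometric method, which bounds the number of solutions to at most one beyond the trivial ones and forces $n\ge 10^{100,000p}$, with Baker-type lower bounds for the linear form $\log\bigl(b v^p/((s-2)au^p)\bigr)$. These give the cutoffs $p_0(s)$ stated in (iii).
\end{enumerate}

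For (v), under GRH I would verify, for each $11\le p\le 31$, the class group and unit computations needed to solve the Thue equations, as done for the previous theorems. For (vi), I would apply $abc(1.63)$ to $bv^p+(s-4)=(s-2)au^p$. This gives $|u|^p\ll (\rad)^{1.63}\ll |uv|^{1.63}$, which is impossible for $p>31$ unless $|uv|\le 1$.

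The main obstacle is the range $11\le p\le p_0(s)$. There the modular method leaves a persistent newform corresponding to the trivial solutions, so unconditionally one only obtains the hypergeometric "at most one solution" statement. Closing that gap requires either GRH-conditional Thue solving or $abc$.
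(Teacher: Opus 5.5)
\textbf{Verdict: genuine gap.} Your skeleton matches the paper: the $\gcd$ splitting, the principal equations $(k+1)x^p-y^p=k$ ($k=4,6$) and $(k+2)x^p-2y^p=k$ ($k=9,15$), Thue solvers for $p=3,5,7$, GRH-conditional Thue solving for (v), and $abc(1.63)$ for (vi).

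The gap is in your step~4, which is where (iii) and (iv) come from. The hypergeometric input is Evertse's theorem (Lemma~\ref{lem:Evertse1}). It gives at most one solution besides $(1,1)$ for $p\ge 17$, but no lower bound of the form $|n|\ge 10^{100{,}000p}$, and nothing else in your plan produces one. In the paper that bound is computational:
\begin{itemize}
\item By Tzanakis--de Weger (Lemma~\ref{lem:Y0}), any solution with $|x|\ge 2$ makes $y/x$ a convergent of $\xi=(k+1)^{1/p}$ (resp.\ $((k+2)/2)^{1/p}$).
\item For every prime $p$ up to the preliminary Laurent bound $P_{k,1001}$, one expands $\xi$ until the denominators exceed $10^{100{,}000}$ and records the largest partial quotient $A$.
\item One then plays $|\xi-p_n/q_n|>1/((A+2)q_n^2)$ against the upper bound $\ll |q_n|^{-p}$ that a solution forces. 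This leaves only $|q_n|\le 2$ (Lemma~\ref{lem:cf-LB}).
\end{itemize}
Only after feeding $y\ge 10^{100{,}000}$ back into Laurent's Theorem~2 do you get $p_0(s)=251,281,307,346$. With just $y\ge 3$, the same linear form gives only $p\le 780,850,911,1020$ for $s=12,13,16,19$. So as written, step~4 delivers neither the stated cutoffs nor the size condition in (iv).

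Your steps 1--3 do no work for the principal equations, as you concede. The paper notes that the modular method settles $(k+1)x^p-y^p=k$ only for $k\in\{1,2,3,8\}$. For the auxiliary $\gcd$ cases, your claim that comparing traces of Frobenius eliminates every other newform is not justified. You would have to compute the newforms at each level, show the elimination actually happens, and supply a fallback where it does not. The paper does not rely on this. Instead it:
\begin{itemize}
\item bounds $p$ with Mignotte's explicit result (Lemma~\ref{lem:mignotte-binomial-bnd}, giving $p<10{,}700\log(\cdot)$);
\item sieves the finitely many pairs with a Sophie Germain-type congruence test modulo primes $q=2pr+1$ and local solubility checks;
\item solves the few survivors with Thue solvers;
\item quotes published results for several equations: Bugeaud--Mignotte--Siksek's multi-Frey theorems for $5x^p-2^{p-4}y^p=1$ and $7x^p-6^{p-2}y^p=1$, Bennett--Gy\H{o}ry--Mignotte--Pint\'er for $5\cdot 2^{p-4}x^p-y^p=1$, and Bartolom\'e--Mih\u{a}ilescu's theorem on $X^n-1=BZ^n$.
\end{itemize}

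A smaller point: for $m=4$ your split equations $bv^4-(s-2)au^4=c$ define genus-$3$ curves, not genus $\le 1$. The genus-one model the paper uses is $x^2=8(s-2)t^4+(s-4)^2$ with $x=2(s-2)n-(s-4)$, handled by \texttt{IntegralQuarticPoints}.
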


\begin{rem-nonum}
(i) We can improve the upper bounds $p_0(s)$ on $p$ in parts~(iii)
of the above Theorems. 
In Appendix~\ref{sect:A-1} we give details for $s = 12, 14, 16, 18$.

(ii) Our calculations in Sections~\ref{calculations for s even}, \ref{calculations for s odd} and \ref{sect:cases s = 16, 19}  frequently 
use $\mathtt{PARI/GP}$. 
Our use of the \verb+thueinit()+ function itself is independent of GRH if we knew that the class number of the related number field were $1$ (or if the constant term in the Thue equation were $\pm 1$). But it turns out that we only know that the class number is $1$ 
conditionally, depending on GRH. The obvious thing is to run \verb+bnfcertify()+ on all the results that we have from the 
\verb+thueinit()+ function. But even for ``low degree'' cases, \verb+bnfcertify()+ would take far too long. So all the Thue equations 
that we solved using $\mathtt{PARI/GP}$ where the constant term in the Thue equation is not $\pm 1$ are currently dependent on GRH.  
\end{rem-nonum}

The proofs of our results use the modular and hypergeometric methods, linear forms
in logarithms and extensive computations in $\mathtt{MAPLE}$, $\mathtt{MAGMA}$, and $\mathtt{PARI/GP}$. 
We expect (based on GRH and the weak effective $abc$ conjecture) that there will be no additional solutions beyond those explicitly 
shown in Theorems~\ref{thm: s even}, \ref{thm: s odd} and \ref{thm: s remaining}. 

Part~(i) in each of Theorems~\ref{thm: s even}-\ref{thm: s remaining} follows
immediately from the results in Section~\ref{sect:m=4}.
Part~(ii) in each of these theorems follows immediately from the results in
Sections~\ref{calculations for s even}, \ref{calculations for s odd} 
and \ref{sect:cases s = 16, 19}. 
Parts~(iii) and (iv) in each of these theorems follow immediately from the results
in Sections~\ref{sect:p-power-even} and \ref{sect:p-power-odd}.
Part~(v) in each of Theorems~\ref{thm: s even}-\ref{thm: s remaining} follows
immediately from the results in Sections~\ref{calculations for s even}, \ref{calculations for s odd} 
and \ref{sect:cases s = 16, 19}.  
Part~(vi) of each of them follows immediately from Lemmas~\ref{lem:effective abc},
\ref{lem:effective abc 2}, \ref{lem: even s Case III} and \ref{lem: odd s Case IIIb}.

Programs and numerical data used in our article are available at: 

\href{https://github.com/gsoydan74/Polygonal}{https://github.com/gsoydan74/polygonal} 

{\bf Acknowledgements.}
The numerical calculations reported in this paper were partially performed at
T\"{U}B\.{I}TAK (The Scientific and Technological Research Council of T\"{u}rkiye)
ULAKB\.{I}M, High Performance and Grid Computing Center (ARF and TRUBA resources).
G. S. cordially thanks Sevil Sar{\i}kurt Malcio\u{g}lu and \.{I}smail G\"uzel
who are senior researchers in T\"UB\.ITAK-ULAKB\.IM for their kind and patient
assistance with the use of ARF-TRUBA resources and thanks Adnan K{\i}l{\i}\c{c} (who
is in the physics department of Bursa Uluda\u{g} University) for his help and
for introducing ARF-TRUBA resources to him. G. S. would also like to give special
thanks to Bill Allombert, who is the main developer of the $\mathtt{PARI/GP}$ computer
algebra system, for his generous help and suggestions as well as for preparing a
snapshot of the new version of $\mathtt{PARI/GP}$ ($\mathtt{PARI/GP}$ version 2.16,
git branch \textit{bill-parbnf-gokhan}) so that some equations in the paper can be
solved with PARI/GP. Two computations in this paper were carried out using the PlaFRIM
experimental testbed, supported by Inria, CNRS (LABRI and IMB), Universit\'{e} de
Bordeaux, Bordeaux INP and Conseil R\'{e}gional d'Aquitaine (see https://www.plafrim.fr).
G. S. was supported by the Research Fund of Bursa Uluda\u{g} University under Project No: FGA-2023/1545.
This paper began when the second author was visiting the third author. The second author
expresses gratitude to Bursa Uluda\u{g} University for offering an excellent working
environment and for its hospitality. The first author would like to thank very much
the third author for visiting him in Szczecin, and for very fruitful discussions.
We also thank Tomasz J\k{e}drzejak for preparing $\mathtt{MAGMA}$ code to use the
Sophie Germain type argument. 

\section{Fourth powers in sequences of polygonal numbers}
\label{sect:m=4}

For $m=4$, equation~\eqref{eq:polygonal} can be written as
\[
x^{2}=8(s-2)t^{4}+(s-4)^{2},
\]
with $x=2(s-2)n-(s-4)$. To find the integer solutions $(x,t)$ of the above
equation, we use the subroutine $\mathtt{IntegralQuarticPoints}$ of $\mathtt{MAGMA}$.
For $s=12,13,19$ or $s=k+4$ with $3\leq k\leq 97$ or $s=2k+4$ with $5\leq k\leq 97$
being a prime number, all the integer solutions are given by $(x,t)=(\pm (s-4),0)$
and $(\pm s, \pm 1)$, except that for $s=7,12,14,19,21,23,38,57$ and $62$ we
have the additional integer solutions $(\pm 57,\pm 3)$, $(\pm 1288,\pm 12)$,
$(\pm 48010, \pm 70)$, $(\pm 49, \pm 2),$ $(\pm 1233, \pm 10)$, $(\pm 467,\pm 6)$,
$(\pm 19618, \pm 34)$, $(\pm 757, \pm 6)$, and $(\pm 9662,\pm 21)$ respectively.
Furthermore, for $s=16$, all the integer solutions are given by $(x,t)=(\pm 12,0)$
and $(\pm 16, \pm 1)$, $(\pm 44, \pm 2)$, $(\pm 96, \pm 3)$, $(\pm 1524, \pm 12)$.
This leads to the following results.

\begin{lem}
\noindent
{\rm (a)}
Let $5\leq k\leq 97$ be a prime number. The Diophantine equation $P_{2k+4}(n)=t^{4}$
has only the trivial solutions $(n,t)=(0,0),(1,\pm 1)$ except for $k=5$, $17$ and
$29$ (that is, $s=2k+4=14$, $38$ and $62$), where we have the additional solutions
$(-2000,\pm 70)$, $(-272,\pm 34)$ and $(81, \pm 21)$, respectively.

\noindent
{\rm (b)}
The Diophantine equation $P_{12}(n)=t^{4}$ has only the trivial solutions
$(n,t)=(0,0)$, $(1,\pm 1)$, $(-64,\pm 12)$.

\noindent
{\rm (c)}
The Diophantine equation $P_{16}(n)=t^{4}$ has only the trivial solutions
$(n,t)=(0,0),(1,\pm 1),(2,\pm 2),(-3,\pm 3),(-54,\pm 12)$.
\end{lem}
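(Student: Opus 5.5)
The lemma follows from the $\mathtt{IntegralQuarticPoints}$ computation described just above by translating each integral point $(x,t)$ back to a pair $(n,t)$. Multiplying $P_s(n)=t^4$ by $8(s-2)$ and completing the square gives
\[
\bigl(2(s-2)n-(s-4)\bigr)^2 = 8(s-2)t^4+(s-4)^2 .
\]
So every solution $(n,t)$ yields an integral point $(x,t)$ on the quartic $x^2=8(s-2)t^4+(s-4)^2$, with $x=2(s-2)n-(s-4)$. Conversely, an integral point gives an integer $n$ exactly when
\[
x\equiv -(s-4)\equiv s \pmod{2(s-2)},
\]
and then $n=(x+s-4)/(2(s-2))$. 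Since the quartic has genus one and $(s-4,0)$ is a rational point on it, $\mathtt{IntegralQuarticPoints}$ provably returns all integral points. I would take that list, for each relevant $s$, as the starting input.

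For each listed point I would check both signs of $x$ against the congruence and discard the points that fail it.
\begin{itemize}
\item The trivial points give exactly the trivial solutions. From $x=s-4$ we get $n=(2s-8)/(2s-4)$, which is not an integer for $s>4$, while $x=-(s-4)$ gives $n=0$. From $x=s$ we get $n=1$, while $x=-s$ gives $n=-4/(2s-4)$, which is not an integer.
\item For part (a), $s=2k+4$ with $k$ prime, the only extra points occur at $s=14,38,62$, that is $k=5,17,29$. For $s=14$ the divisor is $2(s-2)=24$ and $x=-48010$ gives $n=(-48010+10)/24=-2000$. For $s=38$ the divisor is $72$ and $x=-19618$ gives $n=(-19618+34)/72=-272$. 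For $s=62$ the divisor is $120$ and $x=9662$ gives $n=(9662+58)/120=81$. In each case the opposite sign of $x$ fails the congruence.
\item For part (b), $s=12$: here $2(s-2)=20$, and $x=-1288$ gives $n=(-1288+8)/20=-64$.
\item For part (c), $s=16$: here $2(s-2)=28$. The point $x=44$ gives $n=(44+12)/28=2$, $x=-96$ gives $n=-3$, and $x=-1524$ gives $n=-54$. The other sign choices are not divisible by $28$.
\end{itemize}
Each resulting $(n,t)$ can be double-checked by direct substitution into $P_s(n)$.

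I expect no step of this deduction to be a real obstacle. The substantive point is the completeness and reliability of $\mathtt{MAGMA}$'s integral point computation on each quartic, which rests on an elliptic-log / Baker bound plus reduction. That is taken as given in the preceding paragraph.
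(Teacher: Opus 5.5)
Your proposal is correct and follows essentially the same route as the paper: rewrite $P_s(n)=t^4$ as $x^2=8(s-2)t^4+(s-4)^2$ with $x=2(s-2)n-(s-4)$, take the $\mathtt{MAGMA}$ list of integral points, and keep only those with $x\equiv -(s-4)\pmod{2(s-2)}$. Your explicit congruence checks, which the paper leaves implicit, are accurate and recover exactly the stated solutions for $s=12,14,16,38,62$.
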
  

\begin{lem}
\noindent
{\rm (a)}
Let $3 \leq k \leq 97$ be a prime number. The Diophantine equation $P_{k+4}(n)=t^{4}$
has only the trivial solutions $(n,t)=(0,0)$, $(1,\pm 1)$ except for $k=3,17$ (that
is, $s=k+4=7,21$), where we have the additional solutions $(6, \pm 3)$ and $(-32,\pm 10)$,
respectively.

\noindent
{\rm (b)}
The Diophantine equation $P_{13}(n)=t^{4}$ has only the trivial solutions $(n,t)=(0,0),(1,\pm 1)$.

\noindent
{\rm (c)}
The Diophantine equation $P_{19}(n)=t^{4}$ has only the trivial solutions $(n,t)=(0,0)$,$(1,\pm 1)$, $(-1,\pm 2)$.
\end{lem}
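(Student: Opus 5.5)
The plan is to read off the solutions of $P_{k+4}(n)=t^4$, $P_{13}(n)=t^4$ and $P_{19}(n)=t^4$ from the complete lists of integer points on the quartics $x^2=8(s-2)t^4+(s-4)^2$ given just before the previous lemma. First, verify the reduction. Multiplying $2P_s(n)=(s-2)n^2-(s-4)n$ by $4(s-2)$ and completing the square gives
\[
\bigl(2(s-2)n-(s-4)\bigr)^2=8(s-2)P_s(n)+(s-4)^2 .
\]
Hence every solution $(n,t)$ of $P_s(n)=t^4$ yields an integral point $(x,t)$ with $x=2(s-2)n-(s-4)$. Conversely, an integral point $(x,t)$ comes from an integer $n$ exactly when $x\equiv -(s-4)\pmod{2(s-2)}$, in which case $n=(x+s-4)/(2(s-2))$.

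Next, for each relevant $s$ (namely $s=k+4$ with $3\le k\le 97$ prime, together with $s=13$ and $s=19$), take the list of integral points supplied by $\mathtt{IntegralQuarticPoints}$. For each $x$-value, test both signs $\pm x$ against the congruence and recover $n$.

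\begin{itemize}
\item The generic points $(\pm(s-4),0)$ and $(\pm s,\pm1)$ give $n=0$ and $n=1$. For example, $x=-(s-4)$ gives $n=0$ and $x=s$ gives $n=(2s-4)/(2(s-2))=1$. The other signs give $n=(s-4)/(s-2)$ and $n=-4/(2(s-2))$, which are not integers for $s\ge 7$.
\item For $s=7$ the point $x=\pm57$ has $2(s-2)=10$. Here $57+3=60$ gives $n=6$, with $t=\pm3$.
\item For $s=21$ the point $x=\pm1233$ has $2(s-2)=38$. Here $-1233+17=-1216=-32\cdot38$ gives $n=-32$, with $t=\pm10$.
\item For $s=19$ the point $x=\pm49$ has $2(s-2)=34$. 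Here $-49+15=-34$ gives $n=-1$, with $t=\pm2$.
\end{itemize}

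The remaining exceptional points belong to even $s$ ($12,14,38,62$), so they fall outside this lemma. For $s=23$, which is $k=19$ and so in range, the point $x=\pm467$ has to be checked. Since $2(s-2)=42$, we need $467+19=486$ or $-467+19=-448$ to be divisible by $42$, and neither is. So $s=23$ contributes no extra solution, and the same check applies to any other listed odd $s$ with extra points.

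This step is routine arithmetic. The real content, and the main obstacle, is the completeness of the integral point computation on the genus one quartics. That rests on $\mathtt{MAGMA}$'s implementation: a Mordell--Weil basis for the associated elliptic curve, elliptic logarithm bounds, and LLL reduction. I would take these as given, and would note that for each $s$ one must confirm that the Mordell--Weil rank and generators were provably determined, so that the result is unconditional.
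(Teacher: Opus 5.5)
Your proposal is correct and follows the paper's proof: both rewrite the equation as $x^2=8(s-2)t^4+(s-4)^2$ with $x=2(s-2)n-(s-4)$, take the integral points from $\mathtt{MAGMA}$'s $\mathtt{IntegralQuarticPoints}$, and read off $n$, with your congruence check making explicit what the paper leaves implicit. One small slip: the exceptional point $(\pm 757,\pm 6)$ belongs to the odd value $s=57$ (i.e.\ $k=53$), not to an even $s$; your check still disposes of it, since neither $757+53=810$ nor $-757+53=-704$ is divisible by $2(s-2)=110$.
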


\section{The Diophantine equations $Ax^{p} - By^{p} = A - B$, with $A > B > 0$} \label{sect:p-power}

Given positive integers $A > B$, consider the following Diophantine equation 
\begin{equation} \label{eq:Thue general} 
Ax^{p} - By^{p} = A - B. 
\end{equation} 

\begin{remarks-nonum}
(i) The famous $abc$ conjecture implies that there exists a constant
$g(A,B)$ such that for any prime $p > g(A,B)$ the equation~\eqref{eq:Thue general}
only has the solution $(x,y) = (1,1)$ (see, for instance, \cite[Conjecture 2]{IK}). 

(ii) The weak effective $abc$ conjecture implies that $(x,y)=(1,1)$ is the only
solution of the equation~\eqref{eq:Thue general} for $p \geq g_{0}(A,B)$, with
$g_{0}(A,B)$ an effective constant. Lemma~\ref{lem:effective abc} below gives
explicit bounds for some pairs of $(A,B)$.

(iii) We can use \cite[Theorem~3]{Evertse} to prove that the equation~\eqref{eq:Thue general}
has at most one solution 
different from $(x,y)=(1,1)$ for $p\geq h(A,B)$, with $h(A,B)$ an effective constant.
Lemma~\ref{lem:Evertse1} below gives explicit bounds for some pairs of $(A,B)$
applicable to our work here, in particular, for equation~\eqref{eq:Thue1}. 
\end{remarks-nonum}

In the next two sections, we consider the family~\eqref{eq:Thue general} for
$(A,B) = (k+1,1)$ and $(k+2,2)$. 

\section{The Diophantine equations $(k+1)x^{p} - y^{p} = k$} 
\label{sect:p-power-even}

Given a positive integer $k$, consider the following Diophantine equation 
\begin{equation} \label{eq:Thue1} 
(k+1)x^{p} - y^{p} = k. 
\end{equation} 

Bennett \cite{Ben} showed by means of the hypergeometric method that the equation
$\left| (a+1)x^{n} - ay^{n} \right| = 1$ ($a = 1, 2, \ldots$) has no solution with
$|xy| > 1$. This implies that for $k=1$, equation~\eqref{eq:Thue1} has no solution with $|xy| > 1$. 

The equation~\eqref{eq:Thue1} for $k = 2$, $3$, $8$ also has no solution with $|xy| > 1$.
This follows from \cite[Theorem~1.5]{BVY} for $k = 2,8$, 
and from the recipes of \cite{BS} (see \cite{M-R}) for $k = 3$.

The above results were important steps in solving the equation~\eqref{eq:polygonal} for $s = 6, 8, 10, 20$. Let us stress that the 
above four cases are the only ones where the modular method allows the complete solution
of the equation~\eqref{eq:Thue1} by associating it with
generalised Fermat equations of signature $(p,p,2)$, $(p,p,3)$ and $(p,p,p)$ (see \cite[Remark 3.1]{M-R}). 

Below we will consider the equation~\eqref{eq:Thue1} for  primes $5\leq k \leq 97$ and $k=4$ or $6$.

\subsection{Number of Solutions of \eqref{eq:Thue1}}

We can use \cite[Theorem~3]{Evertse} to obtain the following.

\begin{lem} \label{lem:Evertse1}
The equation~\eqref{eq:Thue1} has at most one solution different from $(x,y)=(1,1)$ for $p \geq \max \{ 17, 2.1\log(k)+7.5 \}$.
\end{lem}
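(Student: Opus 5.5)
We will deduce Lemma~\ref{lem:Evertse1} directly from \cite[Theorem~3]{Evertse}. That result concerns binomial Thue equations $|ax^{n}-by^{n}|=c$ with $a,b,c$ positive integers. It asserts that for $n$ beyond an explicit bound depending on $\log\max(a,b)$ (with an absolute lower bound of $17$ or so), the equation has at most one solution in coprime positive integers. Here we also need to handle the solution counting when $c$ is not $1$.

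The plan is first to put \eqref{eq:Thue1} into the shape Evertse treats. We take $a=k+1$, $b=1$, $c=k$, $n=p$ an odd prime. Since $p$ is odd, replacing $(x,y)$ by $(-x,-y)$ changes the sign of the left side. So every integer solution gives, after possibly changing signs, a solution of $|(k+1)x^{p}-y^{p}|=k$ with $x,y$ of controlled signs.

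Next we must check coprimality. If a prime $q$ divides both $x$ and $y$, then $q^{p}$ divides $k$. But $k<2^{p}$ in our range, since $p\ge 17$ and $k\le 97$, and more generally whenever $p>\log k/\log 2$. So solutions with $|xy|>1$ satisfy $\gcd(x,y)=1$, as required by Evertse's counting. We then dispose of solutions with $xy=0$ or with $x,y$ of opposite sign. These either contradict $(k+1)x^{p}-y^{p}=k$ directly, for instance $x=0$ forces $y^{p}=-k$, impossible for $p>\log_2 k$, or fall into a sign class counted separately by the theorem.

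The main step is then to substitute $\max(a,b)=k+1$ into the explicit bound of \cite[Theorem~3]{Evertse} for the number of solutions. That bound counts the solution $(1,1)$ together with at most one further one, once $p$ exceeds a threshold of the form $\alpha\log\max(a,b)+\beta$. Simplifying $\log(k+1)\le\log k+1/k$ for $k\ge 4$, we aim to verify that Evertse's threshold is at most $\max\{17,\,2.1\log k+7.5\}$.

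The obstacle I expect is exactly this bookkeeping. We must confirm that Evertse's constants, which may be phrased in terms of a height or of $\max(|a|,|b|)$ and may count solutions up to sign, indeed yield ``at most two'' solutions including $(1,1)$, and that the numerical constants $2.1$ and $7.5$ absorb the $\log(k+1)$ versus $\log k$ discrepancy. If the constants do not quite match, I would instead verify the inequality numerically for the finitely many $k$ under consideration ($k=4,6$ and primes $5\le k\le 97$).
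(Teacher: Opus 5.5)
There is a genuine gap: your description of \cite[Theorem~3]{Evertse} is not what that theorem says, so your ``main step'' has nothing to substitute into. Evertse's Theorem~3 is a gap-principle result. For $n\ge 17$ (where $M_n=8.4n$ and $A_n=2.4$) it gives at most one solution of $0<|ax^n-by^n|\le C$ in coprime positive integers satisfying the size condition $\max\{|ax^n|,|by^n|\}\ge M_nC^{A_n}$. It says nothing about solutions below that size, and the threshold is governed by the right-hand side $C$, not by $\max(a,b)$. Consequently the $2.1\log k+7.5$ in the lemma does not come from $\log\max(a,b)=\log(k+1)$. The $\log(k+1)$ versus $\log k$ bookkeeping you anticipate is beside the point. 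A numerical check against a threshold of the form $\alpha\log\max(a,b)+\beta$ would not prove the lemma.

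The missing idea is to show that every solution other than $(1,1)$ is \emph{large} in Evertse's sense, while $(1,1)$ is not. Take $a=k+1$, $b=1$, $C=k$, $n=p$.
\begin{itemize}
\item If $x,y>0$ and $x=1$, then $y=1$. So any other positive solution has $x\ge 2$, hence $\max\{(k+1)x^p,y^p\}>k2^p$.
\item It therefore suffices that $k2^p\ge 8.4pk^{2.4}$, i.e.\ $p\log 2\ge\log 8.4+\log p+1.4\log k$, or equivalently $p>3.1+\log p/\log 2+2.0198\log k$. This is exactly where $\log k$ enters, through $C^{A_n-1}=k^{1.4}$.
\item The estimate $\log z<0.02641z+2.85$ shows this follows from $p\ge 2.1\log k+7.5$. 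For $4\le k\le 97$ it already holds for every $p\ge 17$.
\item The solution $(1,1)$ has $\max\{k+1,1\}<8.4pk^{2.4}$, so it lies below the threshold and is not counted. This gives ``$(1,1)$ plus at most one more.''
\item Negative solutions $(x,y)=(-X,-Y)$ become $(k+1)X^p-Y^p=-k$. These are covered by the same inequality $0<|(k+1)X^p-Y^p|\le k$.
\end{itemize}
Your coprimality argument ($q^p\mid k$ is impossible since $k<2^p$) and your disposal of mixed-sign and zero solutions are correct and worth keeping. Without the size-threshold step, however, the proof does not go through.
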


\begin{proof}
Assume that $x,y>0$. In the notation of \cite[Theorem~3]{Evertse}, here we have $a=k+1$, $b=1$, $C=k$ and $n=p \geq 17$. So $M_{n}=8.4n=8.4p$ and $A_{n}=2.4$. Hence there is at most one solution of
$0 < \left| (k+1)x^{p}-y^{p} \right| \leq k$ with
$x, y \in \bbN=\{ 1,2,\ldots \}$, $\gcd(x,y)=1$ and
$\max \left\{ (k+1)x^{p}, y^{p} \right\} \geq M_{n}C^{A_{n}}=8.4pk^{2.4}$ (since $b=1$ and $0<y^{p}<(k+1)x^{p}$, $\left| by^{p} \right|$ in \cite[Theorem~3]{Evertse} is $y^{p}$ here).

The only solution of \eqref{eq:Thue1} with $x=1$ is $(x,y)=(1,1)$, so we may assume that $x \geq 2$. Thus,
$\max \left\{ (k+1)x^{p}, \left| by^{p} \right| \right\}>k2^{p}$ and it remains to show that $k2^{p} \geq 8.4pk^{2.4}$ holds here. This will hold if $p>3.1+\log(p)/\log(2)+2.0198\log(k)$. Using the fact that for $z \geq 1$, we have
\[
0.02641z+2.85>\log(z)
\]
we find that for $p \geq 38$, we have $\left( 0.02641/\log(2) \right)p+2.85/\log(2)>\log(p)/\log(2)$.
If $p \geq 2.1\log(k)+7.5$, then
$\left( 1 - 0.02641/\log(2) \right)p -2.85/\log(2)>3.1+2.1\log(k)$. Combining these two inequalities, the desired inequality, $p>3.1+\log(p)/\log(2)+2.0198\log(k)$
holds for $p \geq 2.1\log(k)+7.5$. 
We have $3.1 + \log(17)/\log(2) + 2.0198\log(97) = 16.427464\ldots$,
hence $p >  3.1 + \log(p)/\log(2) + 2.0198\log(k)$ holds for any integer
$4 \leq k \leq 97$ and prime $p \geq 17$. 
If $x,y<0$, the equation~\eqref{eq:Thue1} can be transformed into $(-y)^p-(k+1)(-x)^p=k$. Furthermore, switching between $ax^n$ and $by^n$ in \cite[Theorem~3]{Evertse} has no impact on the final result.
\end{proof}

A special case of the weak effective $abc$ conjecture ($abc(r)$) \cite[page~104]{Browkin}
states that if $r \geq 1.63$ is a fixed real number, then there are no relatively
prime triples of positive integers, $(a,b,c)$, with $a+b=c$ and $L(a,b,c)=\log(c)/\log \rad (abc)>r$.

We can apply this conjecture to our work here.

\begin{lem}
\label{lem:effective abc} 
The weak effective $abc$ conjecture ($abc(r)$) implies that $(x,y)=(1,1)$ is the only solution of the equation~\eqref{eq:Thue1} provided $p>\max \{ 17, 2r+3r\log(k+1) \}$ for some $r \geq 1.63$.

In particular, with $r=1.63$, the weak effective $abc$ conjecture ($abc(r)$) implies that $(x,y)=(1,1)$ is the only solution of the equation~\eqref{eq:Thue1}
when $p>23$ for all primes $3\leq k\leq 97$ or $k = 4,6$.
\end{lem}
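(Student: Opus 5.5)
The plan is to feed a putative solution of \eqref{eq:Thue1} into $abc(r)$ as an explicit $abc$-triple, and play the size of the largest term against the radical. First reduce the signs. Suppose $(x,y)\neq(1,1)$. If $x>0>y$, the left side is at least $(k+1)x^p+1>k$ when $x\ge1$. If $x=0$, then $y^p=-k$, which is impossible since $k$ is not a $p$-th power for $p\ge 3$ in our range. If $x<0<y$, the left side is negative. So either $x,y>0$, or $x,y<0$.

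\emph{Case $x,y>0$.} Here I take the triple $a=y^p$, $b=k$, $c=(k+1)x^p$. If $x=1$ then $y=1$, so assume $x\ge2$. For coprimality, note that a prime dividing both $y$ and $k$ divides $(k+1)x^p$, hence divides $x$ because $\gcd(k,k+1)=1$. It then follows that $p$-th powers divide $(k+1)x^p-y^p=k$. That is impossible since $k<2^{p}$ for $p>17$. Hence $\gcd(a,b)=1$, and therefore $a,b,c$ are pairwise coprime.

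\emph{Case $x,y<0$.} Put $X=-x$ and $Y=-y$, so that $Y^p=(k+1)X^p+k$. I use the triple $a=k$, $b=(k+1)X^p$, $c=Y^p$, with the same coprimality argument.

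In either case $\rad(abc)\le k(k+1)|xy|$. Moreover $|y|^p\le (k+1)|x|^p+k\le (k+2)|x|^p$, so $|y|\le (k+2)^{1/p}|x|$, and the largest term satisfies $c\ge (k+1)|x|^p$ up to harmless adjustments.

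Now apply $abc(r)$ in the form $\log c\le r\log\rad(abc)$. This gives
\[
(k+1)|x|^p \le \bigl(k(k+1)(k+2)^{1/p}|x|^2\bigr)^r,
\]
and hence
\[
(p-2r)\log|x| \le r\log\bigl(k(k+1)(k+2)^{1/p}\bigr) \le r\bigl(2+1/p\bigr)\log(k+1)+O(r/p).
\]
Using $|x|\ge2$ and $1/\log 2<1.4427$, this yields $p<2r+r(2+1/p)(1.4427)\log(k+1)+\dots<2r+3r\log(k+1)$ once $p>17$, which contradicts the hypothesis. The main obstacle is bookkeeping: the factors $k$ versus $k+1$ versus $k+2$ and the $1/p$ terms must be absorbed into the slack between $2\cdot1.4427\approx2.885$ and $3$. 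That slack is $\approx0.115\,r\log(k+1)$, which comfortably covers the stray $r\log((k+2)/(k+1))$ and $(r/p)\log(k+2)$ terms for $p\ge17$. I would check this inequality carefully, possibly treating the two sign cases separately.

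For the numerical statement, take $r=1.63$. The bound $2r+3r\log(k+1)$ is increasing in $k$, and at $k=97$ it equals $3.26+4.89\log 98\approx25.7$. Every prime $p>23$ satisfies $p\ge29>25.7$, and also $p>17$. So the general statement gives the claim for all primes $3\le k\le97$ and for $k=4,6$.
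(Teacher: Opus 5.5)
Your positive case ($x,y>0$) matches the paper's argument: the same triple $(y^p,k,(k+1)x^p)$, the bounds $\rad(abc)\le k(k+1)xy$ and $y<(k+1)^{1/p}x$, and $x\ge 2$. Your explicit coprimality check and sign reduction are correct, and the paper leaves both implicit.

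The gap is in the case $x,y<0$. There you work in terms of $|x|$ but never show $|x|\ge 2$. If $x=-1$, then $\log|x|=0$, so the inequality $(p-2r)\log|x|\le\dots$ gives no contradiction. Your bound $|y|^p\le(k+1)|x|^p+k\le(k+2)|x|^p$ also fails there: the second inequality needs $|x|^p\ge k$, and at $|x|=1$ the middle term is $2k+1>k+2$. So a putative solution $(x,y)=(-1,-Y)$ with $Y^p=2k+1$ is not excluded by what you wrote.

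The fix is short. Since $2k+1$ is odd and $>1$, you get $Y\ge 3$, hence $3^p\le 2k+1$. This contradicts $p>3r\log(k+1)$, and for $k\le 97$ it already fails for every $p\ge 5$. Once $X=-x\ge 2$ is known, $X^p\ge 2^p>k$ and the rest of your argument goes through.

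The paper avoids this issue by parametrising the negative case by $Y=-y$. It uses $c=Y^p$ and, from $X<Y$, $\rad(abc)\le k(k+1)XY<(k+1)^2Y^2$. Here $Y\ge 3$ holds automatically, and this also gives the sharper threshold $p>2r+1.8r\log(k+1)$ in that case.

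A minor point: in the general, non-numerical part of the statement, $k$ is not bounded. So in the coprimality step, justify $k<2^p$ from $p>3r\log(k+1)$, not from $p>17$.
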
 

\begin{proof}
If $x,y>0$, we apply the weak effective $abc$ conjecture with $a=y^{p}$, $b=k$
and $c=(k+1)x^{p}$ (we will specify $r$ shortly). We have $0<y^{p}<(k+1)x^{p}$,
so $L(a,b,c) \geq p\log(x)/ \log(k(k+1)xy)>p\log(x)/ \log \left( (k+1)^{2+1/p}x^{2} \right)$.
For fixed $k$, this is a monotonically increasing function of $x$. If there is a
solution of \eqref{eq:Thue1} with $(x,y) \neq (1,1)$, then $x \geq 2$, so
$L(a,b,c)>p\log(x)/ \left( 2.06\log(k+1)+2\log(x) \right)$, since we may assume
that $p \geq 17$. Hence $L(a,b,c)>r$, if $p>2r+3r\log(k+1)$.

If $x,y<0$, we can rewrite \eqref{eq:Thue1} as 
\begin{equation}\label{eq:Thue1-2} 
(k+1)X^{p} - Y^{p} = -k,
\end{equation} 
with $X,Y>0$. We apply the weak effective $abc$ conjecture with $a=(k+1)X^{p}$,
$b=k$ and $c=Y^{p}$. We have $0<X^{p}<Y^{p}$, so
$L(a,b,c) \geq p\log(Y)/ \log(k(k+1)XY)>p\log(Y)/ \log \left( (k+1)^{2}Y^{2} \right)$.
For fixed $k$, this is a monotonically increasing function of $Y$. Since
$(X,Y) = (1,1)$ is not a solution of \eqref{eq:Thue1-2}, we have $Y \geq 3$.
Hence $L(a,b,c)>r$, if $p>2r+1.8r\log(k+1)$.
\end{proof}

We now obtain upper bounds for the exponent $p$ such that the Thue equations
in \eqref{eq:Thue1} have solutions with $|x|>1$. We proceed in an iterative way using
lower bounds for linear forms in the logarithms of two algebraic numbers due to
Laurent \cite{Laurent}, as well as continued fraction calculations.

\subsection{Linear forms in logarithms}
\label{subsect:lfl-1}

We first provide the linear form that arises here.

We can rewrite the equation~\eqref{eq:Thue1} as
\begin{equation}
\label{eq:lemL-1}
k+1 - \frac{k}{x^{p}} = \left( \frac{y}{x} \right)^{p}.
\end{equation}

So we see that $y/x>1$.
Suppose that $x,y>0$. The case of $x,y<0$ follows in the very same way by using
$-x$ and $-y$ instead of $x$ and $y$.
If $x=1$, then from equation~\eqref{eq:Thue1},
we see that $y=1$ too. So we may assume that $y>x \geq 2$.

From \eqref{eq:lemL-1} and $y>2$, we can use Exercise~1.1(b) of \cite{Wald}
with $z=(k+1)(x/y)^{p}$ and $\theta=k/y^{p}<1/2$ (from $p \geq 150$ and $k<3^{149}$)
to obtain
\begin{equation}
\label{eq:lfl}
\left\vert \Lambda \right\vert \leq \frac{2k}{y^{p}},\quad \text{where}\quad
\Lambda:=p\log \left( \frac{y}{x} \right) - \log (k+1).
\end{equation}

We initially apply \cite[Theorem~2]{Laurent}. We start by presenting some notation
from his paper.

Let $\alpha_{1}$ and $\alpha_{2}$ be
real algebraic numbers with $\left| \alpha_{1} \right| \geq 1$ and $\left| \alpha_{2} \right| \geq 1$.
We consider the linear form
\[
\Lambda=b_{2} \log \alpha_{2}-b_{1} \log \alpha_{1},
\]
where $\log \alpha_{1}$, $\log \alpha_{2}$ are any determinations of the logarithms
of $\alpha_{1}$, $\alpha_{2}$ respectively, and $b_{1}$, $b_{2}$ are positive
integers. Put
\[
D=\left[ \bbQ \left( \alpha_{1},\alpha_{2} \right) : \bbQ \right]/\left[ \bbR \left( \alpha_{1},\alpha_{2} \right):\bbR \right].
\]

Let $\alpha$ be any non-zero algebraic number with $a_{0} \prod_{j=1}^{d} \left( X-\alpha^{(j)} \right)$
as its minimal polynomial over $\bbZ$. We denote by
\[
h(\alpha)=\frac{1}{d} \left( \log \left| a_{0} \right| + \sum_{j=1}^{d} \log \max \left\{1, \left| \alpha^{(j)} \right| \right\} \right),
\]
the absolute logarithmic height of $\alpha$.

For $i=1,2$, we put
\[
a_{i} \geq \max \left\{ 1, \varrho \left| \log \alpha_{i} \right| - \log \left| \alpha_{i} \right| + 2Dh \left( \alpha_{i} \right) \right\}.
\]

We apply Laurent's Theorem~2 with $b_{1}=1$, $b_{2}=p$, $\alpha_{1}=k+1$,
and $\alpha_{2}=y/x$. For this choice, we have
$h \left( \alpha_{1} \right)=\log (k+1)$, $h \left( \alpha_{2} \right)=\log y$,
and $D=1$. So we can take
$a_{1}:=\rho \left| \log \alpha_{1} \right| - \log \left| \alpha_{1} \right|+2Dh\left( \alpha_{1} \right)=(\rho+1)\log (k+1)$.
From equation~\eqref{eq:lemL-1}, we have $y/x<(k+1)^{1/p}$, so we put
$a_{2}:=(\rho-1) \log(k+1)/p+2\log(y)
\geq \rho \left| \log \alpha_{2} \right| - \log \left| \alpha_{2} \right|+2Dh\left( \alpha_{2} \right)$.

For each value of $k$, we first assumed that $y \geq 3$ and used a program written
in Maple (the function \verb+check_1()+ in \verb+Laurent-Thm-2-estimate.txt+) to
search for values of $\varrho$ and $\mu$ that gave the best lower bound
for $\log | \Lambda |$ in Theorem~2 of \cite{Laurent}. Dividing these lower bounds
by $\log(3)$ ($3$ here is the lower bound for $y$) and comparing this expression
with the upper bound from \eqref{eq:lfl}, we obtained an upper bound for $p$. We
denote this upper bound by $P_{k,3}$.
The results are in the $y \geq 3$ column of Table~\ref{table:thm-2}.

By increasing the lower bound for $y$, we reduce the size of the lower order terms
in the lower bound for $\log |\Lambda|$ in Laurent's Theorem~2. So we conducted
a brute force search for solutions of \eqref{eq:Thue1} for each $k$ and for $p \leq P_{k,3}$
with $|y| \leq 1000$. No additional solutions were found. We then proceeded as in
the previous paragraph, but with the assumption that $y \geq 1001$. This gave us
an improved upper bound: $p \leq P_{k,1001}$.
The results are in the $y \geq 1001$ column of Table~\ref{table:thm-2}.

\begin{table}[ht!]
\centering
\begin{tabular}{|c|c|c|c|} \hline
$k$ & $\left( \varrho,\mu,P_{k,3} \right)$ & $\left( \varrho,\mu,P_{k,1001} \right)$ & $\left( \varrho,\mu,P_{k,10^{100,000}} \right)$ \\ \hline
$4$ & $(74/5, 1/3, 780)$ & $(181/10, 1/3, 402)$  & $(187/10, 1/3, 256)$ \\ \hline
$5$ & $(73/5, 1/3, 851)$ & $(179/10, 1/3, 440)$  & $(185/10, 1/3, 284)$ \\ \hline
$6$ & $(72/5, 1/3, 911)$ & $(177/10, 1/3, 472)$  & $(184/10, 1/3, 307)$ \\ \hline
$7$ & $(71/5, 1/3, 963)$ & $(176/10, 1/3, 499)$  & $(184/10, 1/3, 327)$ \\ \hline
$11$ & $(69/5, 1/3, 1121)$ & $(173/10, 1/3, 582)$  & $(91/5, 1/3, 388)$ \\ \hline
$13$ & $(137/10, 1/3, 1181)$ & $(86/5, 1/3, 614)$  &$(181/10, 1/3, 411)$\\ \hline
$17$ & $(27/2, 1/3, 1278)$ & $(171/10, 1/3, 665)$  &$(181/10, 1/3, 448)$ \\ \hline
$19$ & $(69/5, 1/3, 1319)$ & $(17, 1/3, 686)$  &$(18, 1/3, 464)$ \\ \hline
$23$ & $(133/10, 1/3, 1390)$ & $(169/10, 1/3, 723)$  & $(18, 1/3, 492)$ \\ \hline
$29$ & $(66/5, 1/3, 1477)$ & $(84/5, 1/3, 769)$  &$(18, 1/3, 525)$ \\ \hline
$31$ & $(131/10, 1/3, 1503)$ & $(84/5, 1/3, 782)$  &$(179/10, 1/3, 535)$ \\ \hline
$37$ & $(13, 1/3, 1570)$ & $(167/10, 1/3, 817)$  & $(179/10, 1/3, 560)$ \\ \hline
$41$ & $(129/10, 1/3, 1609)$ & $(167/10, 1/3, 837)$  &$(179/10, 1/3, 575)$ \\ \hline
$43$ & $(129/10, 1/3, 1628)$ & $(83/5, 1/3, 846)$  &$(179/10, 1/3, 582)$ \\ \hline
$47$ & $(129/10, 1/3, 1662)$ & $(83/5, 1/3, 864)$  &$(179/10, 1/3, 595)$ \\ \hline
$53$ & $(64/5, 1/3, 1708)$ & $(83/5, 1/3, 888)$  &$(179/10, 1/3, 613)$ \\ \hline
$59$ & $(127/10, 1/3, 1750)$ & $(33/2, 1/3, 909)$  &$(89/5, 1/3, 629)$ \\ \hline
$61$ & $(127/10, 1/3, 1763)$ & $(33/2, 1/3, 916)$  &$(89/5, 1/3, 634)$\\ \hline
$67$ & $(127/10, 1/3, 1799)$ & $(33/2, 1/3, 934)$  &$(89/5, 1/3, 648)$ \\ \hline
$71$ & $(63/5, 1/3, 1822)$ & $(82/5, 1/3, 946)$  &$(89/5, 1/3, 656)$ \\ \hline
$73$ & $(63/5, 1/3, 1833)$ & $(82/5, 1/3, 951)$  &$(89/5, 1/3, 660)$ \\ \hline
$79$ & $(63/5, 1/3, 1864)$ & $(82/5, 1/3, 967)$  &$(89/5, 1/3, 672)$ \\ \hline
$83$ & $(63/5, 1/3, 1883)$ & $(82/5, 1/3, 977)$  &$(89/5, 1/3, 679)$ \\ \hline
$89$ & $(25/2, 1/3, 1911)$ & $(82/5, 1/3, 991)$  & $(89/5, 1/3, 690)$ \\ \hline
$97$ & $(25/2, 1/3, 1945)$ & $(163/10, 1/3, 1008)$  & $(89/5, 1/3, 702)$ \\ \hline
\end{tabular}\caption{Results from Laurent's Theorem~2}\label{table:thm-2}
\end{table}

\subsection{Continued fractions}
\label{continued fractions 1}
By increasing the lower bound on $y$ even further, we can reduce the upper bound
on $p$ more. An efficient way to do this is using continued fractions. In particular,
the continued fraction expansion of the positive real root of $(k+1)X^{p}-1$,
which we label $\xi$. We start by obtaining a lower bound on $x$ such that $y/x$
is a convergent in the continued fraction expansion of $\xi$ for any solution
$(x,y)$ of the Thue equation~\eqref{eq:Thue1}.

\begin{lem}\label{lem:Y0}
If $(x,y)$ is a solution of \eqref{eq:Thue1} with $\gcd(x,y)=1$ and $|x| \geq 2$,
then $y/x$ is a convergent in the continued fraction expansion of $(k+1)^{1/p}$.
\end{lem}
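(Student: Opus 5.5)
The plan is to use Legendre's criterion: if $\xi$ is irrational and $p_0/q_0$ (with $q_0\ge 1$, $\gcd(p_0,q_0)=1$) satisfies $|\xi - p_0/q_0| < 1/(2q_0^2)$, then $p_0/q_0$ is a convergent of $\xi$. Here $\xi=(k+1)^{1/p}$, which is irrational because $k+1\ge 2$ is not a perfect $p$-th power for the $k$ under consideration. So it suffices to show that $|\xi - y/x| < 1/(2x^2)$. By the symmetry noted in Subsection~\ref{subsect:lfl-1}, we may replace $(x,y)$ by $(-x,-y)$ and assume $x,y>0$. Then equation~\eqref{eq:lemL-1} gives $y/x<\xi$ and $y>x\ge 2$.

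The key step is the factorisation
\[
\xi^{p}-\left(\frac{y}{x}\right)^{p} = \frac{k}{x^{p}},
\]
which comes directly from \eqref{eq:Thue1}. Writing $u=y/x<\xi$, we have
\[
\xi^{p}-u^{p}=(\xi-u)\sum_{j=0}^{p-1}\xi^{p-1-j}u^{j}\ge (\xi-u)\,p\,u^{p-1},
\]
since every term is at least $u^{p-1}$. Therefore
\[
0<\xi-\frac{y}{x}\le \frac{k}{p\,x^{p}u^{p-1}}=\frac{k}{p\,x\,y^{p-1}}.
\]
It remains to check that $k/(p\,x\,y^{p-1})<1/(2x^{2})$, which is equivalent to $2kx<p\,y^{p-1}$. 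Since $y>x$, it is enough to have $2k<p\,y^{p-2}$. With $y\ge 3$ and $p$ at least $17$ (or $150$ as used above), this is immediate for $k\le 97$, because $3^{15}$ is already far larger than $2k$. For smaller primes $p$ the inequality still holds easily for all $k\le 97$ once $p\ge 5$; if needed, $y^{p-2}\ge 3^{p-2}$ handles it.

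The only delicate point is the case $p$ small relative to $k$, together with the hypotheses on $\gcd(x,y)$ and the sign of the solution. Coprimality is actually forced by \eqref{eq:Thue1} up to divisors of $k$, and it is assumed in any case. Legendre's theorem requires the fraction to be in lowest terms with a positive denominator, and we have arranged for that. I expect no real obstacle: the main care lies in stating precisely the range of $p$ for which $2k<p\,3^{p-2}$, and this holds for every prime $p\ge 5$ with $k\le 97$ (for example, $5\cdot 27=135>194$ fails, so in fact $p\ge 7$ is needed, or one uses the true bound $y\ge 4$). Since the lemma is applied only for $p\ge 17$, we simply restrict to that range.
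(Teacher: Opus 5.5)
Your direct route is sound when $x,y>0$, but the reduction to that case is wrong. Since $p$ is odd, $(-x,-y)$ satisfies $(k+1)X^{p}-Y^{p}=-k$, not \eqref{eq:Thue1}. Dividing \eqref{eq:Thue1} by $x^{p}$ gives $\xi^{p}-u^{p}=k/x^{p}$ with $u=y/x$ for every solution. When $x<0$ the right-hand side is negative, so $u>\xi$, and your step ``every term is at least $u^{p-1}$'' fails; the terms are only $\geq \xi^{p-1}$.

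The fix is one line and treats both signs together (mixed signs are impossible). In either case $u>1$ and $\xi>1$, so $|\xi^{p}-u^{p}|\geq p\min\{\xi,u\}^{p-1}|\xi-u|>p|\xi-u|$. Hence $|\xi-y/x|<k/(p|x|^{p})\leq 1/(2x^{2})$ whenever $2k\leq p\,2^{p-2}$. This holds for every prime $p\geq 7$ and $k\leq 97$, since $7\cdot 32=224>194$. When $x<0$, apply Legendre to $(-y)/(-x)$.

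So your argument covers all $p\geq 7$, which is the range the paper's proof also tacitly uses via $\sin(\pi/p)>3.03/p$; there is no need to retreat to $p\geq 17$. The condition $p\geq 7$ is also what guarantees that $\xi$ is irrational here, since $k+1=32=2^{5}$ for $k=31$. The aside about ``the true bound $y\geq 4$'' is unsupported and should be dropped.

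The paper does not argue directly. It invokes Lemma~1.1 of Tzanakis--de Weger with the roles of $x$ and $y$ swapped. It computes that lemma's threshold $Y_{0}$ from $|g'|=p(k+1)^{(p-1)/p}$ at the non-real roots and the minimal imaginary part $(k+1)^{1/p}\sin(\pi/p)$, and checks that $Y_{0}\leq 2$. That black box applies to any Thue equation, is blind to the sign of the solution, and gives a threshold independent of $k$.

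Your argument instead exploits the binomial shape. The factorisation gives an approximation constant without the $2^{p-1}$ loss built into the general lemma. You also check Legendre's $1/(2x^{2})$ condition explicitly, i.e.\ you carry out by hand the passage from ``close to the real root'' to ``convergent''. In the Tzanakis--de Weger setting that passage is governed by a second threshold, the $Y_{1}$ that appears in the proof of Lemma~\ref{lem:cf-LB}. Your version is more elementary and self-contained; the price is that the final numerical check, as written, depends on $k\leq 97$ and on $p\geq 7$.
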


\begin{proof} We use Lemma~1.1 of \cite{TdW}.
In the notation of Lemma~1.1 of \cite{TdW}, but swapping the roles of $x$ and $y$, we have $g(y)=y^{p}-(k+1)$, so $g'(y)=py^{p-1}$, $m=-k$, $n=p$, $s=1$, $t=(n-1)/2$, $\xi^{(1)}=\xi=(k+1)^{1/p}$ and $\xi^{i}=\xi \zeta_{p}^{i}$ for $i=s+1,\ldots,s+t$, where $\zeta_{p}=\exp(2 \pi i/p)$. Thus, $\min_{1 \leq i \leq t} \left| g' \left( \xi^{(s+i)} \right) \right| =p(k+1)^{(p-1)/p}$ and 
\[
\min_{1 \leq i \leq t} \left| \Imag \xi^{(s+i)} \right|
=(k+1)^{1/p}\sin(2\pi (p-1)/(2p))=(k+1)^{1/p}\sin(\pi/p),
\]
(the last equality holding since $\sin(\pi-\theta)=\sin(\theta)$).
Since $\sin(x)$ is concave on $[0,\pi/7]$, we have
$\sin(x) \geq x \sin(\pi/7)/(\pi/7)>0.966x$. So for $p \geq 7$, we have
$\sin(\pi/p)>3.03/p$.

Hence
\[
Y_{0} = \left\lceil \left( \frac{2^{p-1}kp}{p(k+1)^{(p-1)/p} \cdot 3.03(k+1)^{1/p}} \right)^{1/p} \right\rceil
< \left\lceil 2\left( \frac{k}{3.03(k+1)} \right)^{1/p} \right\rceil \leq 2.
\]
\end{proof}

\begin{lem}
\label{lem:cf-LB}
For each prime $5 \leq k \leq 97$ or $k=4,6$ and each prime, $p$, satisfying $7 \leq p \leq P_{k,1001}$,
there are no solutions of \eqref{eq:Thue1} with $|x|<10^{100,000}$.
\end{lem}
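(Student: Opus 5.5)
The plan is to combine Lemma~\ref{lem:Y0} with a direct continued fraction computation.

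\textbf{Reductions.} Suppose $(x,y)$ solves \eqref{eq:Thue1} with $(x,y)\neq(1,1)$ and $|x|<10^{100,000}$.

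First, any common prime factor of $x$ and $y$ would have its $p$-th power dividing $k$. This is impossible for $k\le 97$ and $p\ge 7$, so $\gcd(x,y)=1$.

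Second, $x=\pm1$ forces $y^p=1$ or $y^p=2k+1$. Neither gives anything except $(1,1)$: for the second, $2k+1\le 195$ is not a $p$-th power when $p\ge 7$. Hence we may assume $|x|\ge 2$.

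By the sign symmetry noted in Section~\ref{subsect:lfl-1}, we may also take $x,y>0$, so $y>x\ge2$. Lemma~\ref{lem:Y0} then tells us that $y/x$ is a convergent $p_j/q_j$ of the continued fraction expansion of $\xi=(k+1)^{1/p}$. Since $\gcd(x,y)=1$, we have exactly $x=q_j$ and $y=p_j$.

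\textbf{The computation.} For each admissible $k$ (primes $5\le k\le97$, together with $k=4,6$) and each prime $7\le p\le P_{k,1001}$ (taken from Table~\ref{table:thm-2}), I would proceed as follows.

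1. Compute $\xi$ to high precision. Since $q_j$ grows at least like the Fibonacci numbers, it suffices to generate convergents with $q_j<10^{100,000}$. That is roughly $100,000\log 10/\log\phi\approx 480,000$ partial quotients, so $\xi$ needs a little over $200,000$ correct decimal digits.

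2. For each convergent, test whether $(k+1)q_j^p-p_j^p=k$.

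Evaluating this exactly is too expensive. Instead I would use a cheap necessary condition derived from \eqref{eq:lfl}: a genuine solution satisfies $|\xi-p_j/q_j|\approx k/(p\,\xi^{p-1}q_j^{p})$. This is extraordinarily small, and it forces the next partial quotient $a_{j+1}$ to be of size about $q_j^{p-2}$, which for $p\ge 7$ and $q_j\ge2$ is enormous.

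So it is enough to check that no partial quotient in the computed range exceeds a modest threshold (or is inconsistent with the required size). Any convergent that survives this filter would then be checked exactly modulo a few primes, and finally as an integer. Equivalently, one can bound $\max a_j$ and compare it with the lower bound on $a_{j+1}$ implied by a solution.

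\textbf{Main obstacle.} The difficulty is computational scale. There are about $25$ values of $k$, each with up to roughly $150$ primes $p$, and for each pair we need a continued fraction expansion of a $200{,}000$-digit real number.

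I would compute the partial quotients with a fast (half-gcd style) continued fraction algorithm in PARI/GP, using \verb+contfrac+ on a rational approximation of $\xi$. I would also verify that enough precision was used, by checking that two precisions agree on all partial quotients up to the point where $q_j$ passes $10^{100,000}$.

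Once the partial quotients are in hand, the rigorous part is short. Combining the sharpened inequality $|\xi - y/x|<2k/(p\xi^{p-1}x^{p})$ with the standard bound $|\xi-p_j/q_j|>1/((a_{j+1}+2)q_j^2)$ gives $a_{j+1}>p\xi^{p-1}x^{p-2}/(2k)-2$. Hence observing that every partial quotient up to that index is below, say, $10^{3}$ excludes all solutions with $2\le x<10^{100,000}$.
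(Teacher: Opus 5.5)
Your plan follows the paper's proof. Lemma~\ref{lem:Y0} reduces to convergents of $\xi=(k+1)^{1/p}$. One then expands $\xi$ (the paper uses $220{,}000$ digits) until $q_n>10^{100{,}000}$, and compares the lower bound $1/((a_{n+1}+2)q_n^2)$ with the upper bound of order $q_n^{-p}$ forced by \eqref{eq:Thue1}.

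The step that fails as written is your final one, for two reasons.

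\textbf{The premise will not hold.} There are roughly $2\cdot 10^{5}$ partial quotients in that range. Gauss--Kuzmin statistics predict a few hundred of them above $10^{3}$ and a maximum of order $10^{5}$ to $10^{6}$. The paper records a maximum of $530{,}085$ for $k=4$, $p=257$.

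\textbf{The deduction fails for small $x$, even granting the premise.} Your bound $a_{j+1}>p\xi^{p-1}x^{p-2}/(2k)-2$ is only about $57$ to $109$ when $p=7$, $x=2$, and several hundred when $p=7$, $x=3$. So the forced partial quotient is far from ``enormous'' there, and a threshold of $10^3$ does not exclude these cases.

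The correct finish is what your ``bound $\max a_j$'' remark points at, and it is what the paper does. Record the actual maximum $A$ of the computed partial quotients. Deduce that any solution with $2\le |x|<10^{100{,}000}$ satisfies $x^{p-2}<2k(A+2)/(p\xi^{p-1})$. Then eliminate the few remaining small values of $x$ (typically only $x=2$) by direct search.
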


\begin{proof}
Using the \verb+contfrac()+ function in $\mathtt{PARI/GP}$ \cite{Pari}, for each prime $5 \leq k \leq 97$ 
or $k = 4,6$ 
and each $17 \leq p \leq P_{k,1001}$, we
calculated the partial quotients, $a_{n}$, and estimated from below the convergents,
$p_{n}/q_{n}$, in
the continued fraction expansion of $(k+1)^{1/p}$ until the denominator of the convergents
exceeded $10^{100,000}$. We determined the maximum of these partial quotients,
which we denote by $A$. It is well-known that
\[
\frac{1}{\left( a_{n+1}+2 \right) q_{n}^{2}}
< \left| (k+1)^{1/p}-\frac{p_{n}}{q_{n}} \right|.
\]

Hence, for all convergents, $p_{n}/q_{n}$, to $(k+1)^{1/p}$ with $q_{n} \leq 10^{100,000}$,
we have
\[
\frac{1}{\left( A+2 \right) q_{n}^{2}}
< \left| (k+1)^{1/p}-\frac{p_{n}}{q_{n}} \right|.
\]

The first inequality in Lemma~1.1(i) of \cite{TdW} also provides an upper bound
\[
\left| (k+1)^{1/p}- p_{n}/q_{n} \right| < \frac{2^{p-1}k^{1/p}}{p} \left| q_{n} \right|^{-p},
\]
when $\left( q_{n}, p_{n} \right)$ is a solution of equation~\eqref{eq:Thue1}.

Combining these upper and lower bounds, we obtain an upper bound for $q_{n}$
such that $\left( q_{n}, p_{n} \right)$ is a solution of equation~\eqref{eq:Thue1}.
This bound is small enough (typically $x \leq 2$)
that a brute force search suffices.

For example, for $k=4$ and $p=257$, we found that $q_{n}>10^{100,000}$ for $n=194,526$
and that the maximum partial quotient, $530,085$, occurred for $n=41,396$.
With $C_{1}=3.63 \cdot 10^{74}$ and $Y_{1}=2$ in Lemma~1.1 of \cite{TdW}, we found
that equation~\eqref{eq:Thue1} can only hold if $q_{n} \leq 2$.
This calculation, using 220,000 digits of precision, took under 4.3 seconds.

The above calculations were done using the \verb+check_all()+ and \verb+check1()+
functions we wrote, which are contained in \verb+cf-checks.gp+. 
\end{proof}

Applying Laurent's theorem again (using our Maple function \verb+check_1()+ in
\verb+Laurent-Thm-2-estimate.txt+), we obtain the bounds $p \leq P_{k,10^{100,000}}$
in Table~\ref{table:thm-2}.

\section{The Diophantine equations $(k+2)x^{p} - 2y^{p} = k$} 
\label{sect:p-power-odd}

Given a positive integer $k$, consider the following Diophantine equation
\begin{equation}\label{eq:Thue2} 
(k+2)x^{p} - 2y^{p} = k. 
\end{equation}
The equation~\eqref{eq:Thue2} has no solution with $|xy| > 1$ for $k=1$. This
follows for instance from \cite[Theorem~1.1]{BMS}. 
Note that below we use $k:=s-4$ for odd $s \geq 7$.

\subsection{Number of Solutions of \eqref{eq:Thue2}}

We can use \cite[Theorem~3]{Evertse} to obtain the following.

\begin{lem}
\label{lem:Evertse2}
The equation~\eqref{eq:Thue2} has at most one solution different from $(x,y)=(1,1)$
for $p \geq \max \{ 17, 2.1\log(k)+7.5 \}$.
\end{lem}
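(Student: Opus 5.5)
The argument runs parallel to the proof of Lemma~\ref{lem:Evertse1}, applying \cite[Theorem~3]{Evertse} with $a=k+2$, $b=2$, $C=k$ and $n=p\geq 17$, so that $M_n=8.4p$ and $A_n=2.4$. First suppose $x,y>0$. Evertse's theorem then says there is at most one solution of $0<\left|(k+2)x^p-2y^p\right|\leq k$ with $\gcd(x,y)=1$ and
\[
\max\left\{(k+2)x^p,\,2y^p\right\}\geq 8.4\,p\,k^{2.4}.
\]

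Next we rule out small $x$. Coprimality is automatic: a common prime factor of $x$ and $y$ would have to divide $k$, and since its $p$-th power also divides $(k+2)x^p-2y^p=k$, this is impossible for $p>\log_2 k$. If $x=1$, then $2y^p=2$, so $y=1$. Hence any other solution with $x>0$ has $x\geq 2$, which gives $\max\{(k+2)x^p,2y^p\}\geq (k+2)2^p>k2^p$.

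It therefore suffices to check that $k2^p\geq 8.4pk^{2.4}$. This is exactly the inequality already verified in the proof of Lemma~\ref{lem:Evertse1}, namely $p>3.1+\log(p)/\log(2)+2.0198\log(k)$. It holds for $p\geq 2.1\log(k)+7.5$ by the same elementary estimate $0.02641z+2.85>\log z$. Consequently every solution with $x\geq2$ falls in Evertse's range, and there is at most one of them.

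The main point needing care is the sign cases and the normalisation in Evertse's theorem. If $x,y<0$, set $X=-x$ and $Y=-y$; the equation becomes $2Y^p-(k+2)X^p=k$, which is of the same shape with the roles of $ax^n$ and $by^n$ swapped, and Evertse's bound is symmetric in that swap. If $x$ and $y$ have opposite signs, then $(k+2)|x|^p+2|y|^p=\pm k$; this is impossible unless $|x|,|y|$ are tiny, and checking directly leaves at most the trivial $x=0$ or $y=0$ cases, which do not occur for $k\geq 3$. Evertse counts solutions up to sign, that is, pairs $\pm(x,y)$, but for odd $p$ the pairs $(x,y)$ and $(-x,-y)$ solve $\pm k$ equations respectively. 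So a single Evertse solution yields at most one solution of \eqref{eq:Thue2}, and the bound of one extra solution besides $(1,1)$ follows.
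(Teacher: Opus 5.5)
Your proposal is correct and follows the paper's route: the paper's proof just says to repeat the steps of Lemma~\ref{lem:Evertse1}, namely apply Evertse's Theorem~3 with $a=k+2$, $b=2$, $C=k$, use $x\geq 2$ to get $\max\{(k+2)x^p,2y^p\}>k2^p$, and reuse the inequality $k2^p\geq 8.4pk^{2.4}$. Your extra remarks on coprimality and on merging the two sign classes into one count are sound; the case $x=-1$, which neither you nor the paper addresses, is also excluded, since it forces $k+1=|y|^p\geq 2^p$, which contradicts $p\geq 2.1\log(k)+7.5$.
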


\begin{proof}
We use the same steps as in the proof of Lemma~\ref{lem:Evertse1}.
\end{proof}

Applying the weak effective $abc$ conjecture and following a similar argument as in the proof of Lemma~\ref{lem:effective abc}, we obtain:

\begin{lem}\label{lem:effective abc 2} 
The weak effective $abc$ conjecture ($abc(r)$) implies that $(x,y)=(1,1)$ is the
only solution of the equation~\eqref{eq:Thue2} provided $p>\max \{ 17, 3r+3r\log(k+2) \}$
for some $r\geq 1.63$.

In particular, with $r=1.63$, the weak effective $abc$ conjecture ($abc(r)$) implies that $(x,y)=(1,1)$ is the only solution of the equation~\eqref{eq:Thue2} for $k = 3,5,7,9$ and $p\geq 17$, $k=11,13,15$ and $p\geq 19$. 
\end{lem}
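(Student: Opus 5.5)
I would mirror the proof of Lemma~\ref{lem:effective abc}, treating the two sign cases of a putative solution $(x,y)\neq(1,1)$ of \eqref{eq:Thue2} separately and, in each, choosing the $abc$ triple so that the largest term is a $p$-th power times a bounded factor. First I would reduce to the case $\gcd(x,y)=1$ and note that the terms can be made coprime. Any common prime of $(k+2)x^p$ and $2y^p$ divides $k$. Since $k$ is odd in our range ($k=s-4$ with $s$ odd), and $\gcd(x,y)=1$, such a prime divides $\gcd(k,k+2)\in\{1,2\}$, hence is $1$. For $p\ge17$ we may also assume $x,y$ have the same sign, since otherwise $|(k+2)x^p-2y^p|\geq (k+2)+2>k$.

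\emph{Case $x,y>0$.} From $(k+2)x^p-2y^p=k>0$, apply $abc(r)$ to $a=2y^p$, $b=k$, $c=(k+1+1)x^p$. Then $\rad(abc)\le 2k(k+2)xy$. Since $2y^p<(k+2)x^p$ we get $y<(k+2)^{1/p}x$, so
\[
\log\rad(abc)\le \log 2+\log k+\log(k+2)+2\log x+\tfrac1p\log(k+2)<\log 2+(2+\tfrac1p)\log(k+2)+2\log x .
\]
Also $\log c\ge p\log x$. Hence
\[
L(a,b,c)>\frac{p\log x}{\log 2+2.06\log(k+2)+2\log x}.
\]
This is increasing in $x$. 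We have $x\ge2$, since $x=1$ forces $y=1$. Substituting $x=2$ gives $L>r$ provided $p\log2>r(3\log2+2.06\log(k+2))$, i.e.\ $p>3r+2.06r\log(k+2)/\log 2$. Here I must be careful: dividing by $\log 2$ gives roughly $2.97r\log(k+2)$. So the stated bound $p>3r+3r\log(k+2)$ suffices.

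\emph{Case $x,y<0$.} Write $X=-x$, $Y=-y$, so $2Y^p-(k+2)X^p=k$. Take $a=(k+2)X^p$, $b=k$, $c=2Y^p$. Now $X<Y$ and $(X,Y)=(1,1)$ is not a solution, so $Y\ge2$; parity and small checks push this further. The same estimate $\log\rad\le\log2+\log k+\log(k+2)+2\log Y$ against $\log c> p\log Y$ gives a weaker requirement than the first case.

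Finally I would plug in $r=1.63$. Then $3r+3r\log(k+2)$ is below $17$ for $k\le9$, since $4.89(1+\log 11)\approx16.6$. It is below $19$ for $k\le 15$, since $4.89(1+\log17)\approx18.75$. This gives the ``in particular'' clause. The main obstacle is the constant bookkeeping in the first case: checking that the $\log 2$ coming from $\rad$ and from $x\ge2$ is absorbed into $3r+3r\log(k+2)$. If that is too tight, I would use $x\ge3$ after a quick direct check that no solution has $x=2$ for $p\ge17$.
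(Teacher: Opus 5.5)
Your proposal is correct and follows the route the paper indicates, namely the proof of Lemma~\ref{lem:effective abc}. For $x,y>0$ you use the triple $(2y^p,k,(k+2)x^p)$; the extra prime $2$ in the radical is exactly what turns $2r$ into $3r$, since $2.06/\log 2<3$. For $x,y<0$ you use $((k+2)X^p,k,2Y^p)$, which gives a weaker condition.

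Your additional checks are details the paper leaves implicit:
\begin{itemize}
\item Coprimality via $k$ odd. A common prime could also divide $x$ rather than $k+2$, but then it divides $2$, which is again excluded since $k$ is odd.
\item The impossibility of mixed signs.
\item The fact that only $p\ge 17$ is needed, for $1/p\le 0.06$. This is what the ``in particular'' clause with $p\ge 17$ actually uses.
\end{itemize}
Your fallback to $x\ge 3$ is unnecessary.
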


\subsection{Linear forms in logarithms}

In order to obtain an upper bound for the exponent $p$ such that the Thue equations in \eqref{eq:Thue2} have solutions with $|x|>1$, 
we will apply lower bounds for linear forms in the logarithms of two algebraic numbers. 
We can rewrite the equation~\eqref{eq:Thue2} as
\begin{equation}\label{eq:lemL-2}
k+2 - \frac{k}{x^{p}} = 2\left( \frac{y}{x} \right)^{p}.
\end{equation}

As in Subsection~\ref{subsect:lfl-1}, we see that $y/x>1$ and suppose that $x,y>0$.
If $x=1$, then from equation~\eqref{eq:Thue2}, we see that $y=1$ too. So we may
assume that $y>x \geq 2$. From \eqref{eq:lemL-2} and $y>2$, we can use Exercise~1.1(b) of \cite{Wald} with $z=(k+2)/2(x/y)^{p}$ and $\theta=k/y^{p}<1/2$ (from $p \geq 150$ and $k<3^{149}$) to obtain
\begin{equation}\label{eq:lfl-2}
\left\vert \Lambda \right\vert \leq \frac{k}{y^{p}},\quad \text{where}\quad
\Lambda:=p\log \left( \frac{y}{x} \right) - \log \left( \frac{k+2}{2} \right).
\end{equation}
We apply Laurent's Theorem~2 with $b_{1}=1$, $b_{2}=p$, $\alpha_{1}=(k+2)/2$, and $\alpha_{2}=y/x$. For this choice, we have
$h \left( \alpha_{1} \right)=\log (k+2)$, $h \left( \alpha_{2} \right)=\log y$,
and $D=1$. So we can take $a_{1}:=\rho \left| \log \alpha_{1} \right| - \log \left| \alpha_{1} \right|+2Dh\left( \alpha_{1} \right)=(\rho+1)\log \left(\frac{k+2}{2}\right) +2\log 2$. From equation~\eqref{eq:lemL-2}, we have $y/x<((k+2)/2)^{1/p}$, so we put $a_{2}:=(\rho-1)/p \log((k+2)/2)+2\log(y)
\geq \rho \left| \log \alpha_{2} \right| - \log \left| \alpha_{2} \right|+2Dh\left( \alpha_{2} \right)$.

Using again a program written in Maple (the function \verb+check_1()+ in \verb+Laurent-Thm-2-estimate.txt+), 
we collect all the information in Table~\ref{table:thm-2-2}.

\begin{table}[ht!]
\centering
\begin{tabular}{|c|c|c|c|} \hline
$k$ & $\left( \varrho,\mu,P_{k,3} \right)$ & $\left( \varrho,\mu,P_{k,1001} \right)$ & $\left( \varrho,\mu,P_{k,10^{100,000}} \right)$ \\ \hline
$3$ & $(18, 1/3,536)$ & $(43/2, 1/3, 268)$  & $(22, 1/3, 162)$ \\ \hline
$5$ & $(167/10, 1/3, 671)$ & $(101/5, 1/3, 341)$ & $(104/5, 1/3, 213)$ \\ \hline
$7$ & $(161/10, 1/3, 771)$ & $(98/5, 1/3, 394)$  & $(101/5, 1/3, 251)$ \\ \hline
$9$ & $(157/10, 1/3, 850)$ & $(96/5, 1/3, 436)$  & $(199/10, 1/3, 281)$ \\ \hline
$11$ & $(77/5, 1/3, 915)$ & $(189/10, 1/3,471)$  & $(197/10, 1/3, 306)$ \\ \hline
$13$ & $(151/10, 1/3, 971)$ & $(187/10, 1/3, 501)$  & $(39/2, 1/3, 328)$ \\ \hline
$15$ & $(149/10, 1/3, 1020)$ & $(37/2, 1/3, 526)$  & $(97/5, 1/3, 346)$ \\ \hline
$17$ & $(74/5, 1/3, 1064)$ & $(92/5, 1/3, 549)$  & $(193/10, 1/3, 363)$ \\ \hline
$19$ & $(147/10, 1/3, 1103)$ & $(183/10, 1/3, 570)$ & $(96/5, 1/3, 378)$ \\ \hline
$23$ & $(72/5, 1/3, 1171)$ &   $(181/10, 1/3, 606)$ & $(191/10, 1/3, 404)$ \\ \hline
$29$ & $(71/5, 1/3, 1254)$ &   $(179/10, 1/3, 650)$ & $(19, 1/3,437)$ \\ \hline
$31$ & $(141/10, 1/3, 1279)$ &  $(89/5, 1/3, 662)$ & $(189/10, 1/3, 446)$ \\ \hline
$37$ & $(14, 1/3, 1344)$ &    $(177/10, 1/3, 696)$ & $(94/5, 1/3, 471)$\\ \hline
$41$ & $(139/10, 1/3, 1382)$ & $(88/5, 1/3, 716)$ & $(94/5, 1/3, 486)$\\ \hline
$43$ & $(139/10, 1/3, 1400)$ & $(88/5, 1/3, 726)$ &$(94/5, 1/3, 493)$\\ \hline
$47$ & $(69/5, 1/3, 1433)$ & $(35/2, 1/3, 743)$ & $(187/10, 1/3, 505)$\\ \hline
$53$ & $(137/10, 1/3, 1479)$ & $(35/2, 1/3, 766)$ & $(187/10, 1/3, 523)$\\ \hline
$59$ & $(68/5, 1/3, 1519)$ & $(87/5, 1/3, 787)$ & $(93/5, 1/3, 538)$ \\ \hline
$61$ & $(68/5, 1/3, 1532)$ &  $(87/5, 1/3, 794)$  & $(93/5, 1/3, 543)$\\ \hline
$67$ & $(27/2, 1/3, 1568)$ &  $(173/10, 1/3, 812)$ &$(93/5, 1/3, 557)$\\ \hline
$71$ & $(27/2, 1/3, 1590)$ &  $(173/10, 1/3, 824)$ &$(93/5, 1/3, 565)$\\ \hline
$73$ & $(67/5, 1/3, 1601)$ & $(173/10, 1/3, 829)$ &$(93/5, 1/3, 569)$ \\ \hline
$79$ & $(67/5, 1/3, 1631)$ & $(86/5, 1/3, 845)$ & $(37/2, 1/3,581)$\\ \hline
$83$ & $(133/10, 1/3, 1650)$ & $(86/5, 1/3, 855)$& $(37/2, 1/3, 588)$\\ \hline
$89$ & $(133/10, 1/3, 1677)$ &  $(86/5, 1/3, 868)$   &$(37/2, 1/3, 598)$\\ \hline
$97$ & $(66/5, 1/3, 1710)$ &  $(171/10, 1/3, 886)$ & $(37/2, 1/3, 611)$\\ \hline
\end{tabular}
\caption{Results from Laurent's Theorem~2}\label{table:thm-2-2}
\end{table}

\subsection{Continued fractions}
\label{continued fractions 2}

By increasing the lower bound on $y$ even further, we can reduce the upper bound on $p$ more. An efficient way to do this is using continued fractions. 
Using similar arguments as in Section~\ref{continued fractions 1}, we obtain the following results (variants of Lemmas~\ref{lem:Y0} and \ref{lem:cf-LB}). 

\begin{lem}\label{lem:Y0-2}
If $(x,y)$ is a solution of \eqref{eq:Thue2} with $\gcd(x,y)=1$ and $|x| \geq 2$, then $y/x$ is a convergent in the continued fraction expansion of $\left(\dfrac{k+2}{2}\right)^{1/p}$.
\end{lem}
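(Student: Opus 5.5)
We follow the proof of Lemma~\ref{lem:Y0} essentially verbatim, now applying Lemma~1.1 of \cite{TdW} to the binary form attached to \eqref{eq:Thue2}. We swap the roles of $x$ and $y$ as before and take $g(y)=2y^{p}-(k+2)$. Then $g'(y)=2py^{p-1}$, $m=-k$, $n=p$, $s=1$ and $t=(p-1)/2$. The unique real root is $\xi^{(1)}=\xi=((k+2)/2)^{1/p}$, and the non-real roots are $\xi\zeta_{p}^{i}$. Since $\gcd(x,y)=1$, the solution $(x,y)$ is a primitive pair. Lemma~1.1 of \cite{TdW} then says that $y/x$ is a convergent of $\xi$ as soon as $|x|$ exceeds the threshold $Y_{0}$ defined there. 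So the whole task is to show that $Y_{0}\leq 2$.

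Next we compute the two minima appearing in the definition of $Y_{0}$. First,
\[
\min_{1\leq i\leq t}\left| g'\left(\xi^{(s+i)}\right)\right| = 2p\left(\frac{k+2}{2}\right)^{(p-1)/p}.
\]
Second, exactly as before,
\[
\min_{1\leq i\leq t}\left| \Imag \xi^{(s+i)}\right| = \left(\frac{k+2}{2}\right)^{1/p}\sin(\pi/p) > \frac{3.03}{p}\left(\frac{k+2}{2}\right)^{1/p},
\]
where the last inequality uses the concavity bound $\sin(\pi/p)>3.03/p$ for $p\geq 7$. Substituting these into Tengely--de Weger's formula, whose numerator is $2^{p-1}|m|n=2^{p-1}kp$, gives
\[
Y_{0} = \left\lceil \left( \frac{2^{p-1}kp}{2p\,\frac{k+2}{2}\cdot 3.03} \right)^{1/p}\right\rceil
= \left\lceil 2\left(\frac{k}{6.06\,(k+2)}\right)^{1/p}\cdot 2^{-1/p}\right\rceil
\leq 2 .
\]
The last inequality holds because the quantity inside the root is less than $1$. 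The factor coming from the leading coefficient $2$ of $g$ only makes the bound better than in Lemma~\ref{lem:Y0}. Hence $|x|\geq 2$ suffices.

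The main point needing care is to check that the leading coefficient of $g$ enters the hypotheses of Lemma~1.1 of \cite{TdW} exactly as written, through $g'$ and the roots, and not through some extra normalising factor. If the lemma is stated only for monic $g$, there is a fallback: multiply \eqref{eq:Thue2} by $2^{p-1}$ and set $Y=2y$. This gives $Y^{p}-2^{p-1}(k+2)x^{p}=-2^{p-1}k$. One applies the lemma to this monic form and then translates the resulting convergent statement back. The translation works because $Y/x = 2\cdot(y/x)$ and the root is scaled by $2$. One must check in that case that the resulting $Y_{0}$ still stays at most $2$; I expect it does, since the factors of $2^{p-1}$ cancel inside the $p$-th root.
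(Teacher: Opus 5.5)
This is essentially the paper's proof, which simply repeats the Tzanakis--de Weger computation of Lemma~\ref{lem:Y0} with $g(y)=2y^{p}-(k+2)$. Your worry about the leading coefficient is unfounded: in the argument behind that lemma the factor $2$ cancels between $2y^{p}-(k+2)x^{p}=2\prod_{j}(y-\xi^{(j)}x)$ and $g'(\xi^{(i)})=2\prod_{j\neq i}(\xi^{(i)}-\xi^{(j)})$, so the monic fallback is not needed (and its ``translation back'' would rest on Legendre's criterion, $|\xi-y/x|<1/(2x^{2})$, not on mere rescaling). One harmless slip: the quantity under the ceiling is $2^{(p-1)/p}\bigl(k/(3.03(k+2))\bigr)^{1/p}=2\bigl(k/(6.06(k+2))\bigr)^{1/p}$, so writing both $6.06$ and an extra $2^{-1/p}$ counts the factor $2$ twice; the value is still below $2$, so $Y_{0}\leq 2$ stands.
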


\begin{lem}
\label{lem:cf-LB-2}
For each prime $3 \leq k \leq 97$ or $k\in \{9,15\}$ and each prime, $p$, satisfying $7 \leq p \leq P_{k,1001}$,
there are no solutions of \eqref{eq:Thue2} with $|x|<10^{100,000}$.
\end{lem}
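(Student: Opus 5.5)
The plan is to mirror the proof of Lemma~\ref{lem:cf-LB}, now with $\xi=((k+2)/2)^{1/p}$ and Lemma~\ref{lem:Y0-2} in place of Lemma~\ref{lem:Y0}. We may assume $\gcd(x,y)=1$: any common prime divisor $q$ of $x$ and $y$ gives $q^p \mid k$, which is impossible since $k\le 97<2^7$. By the sign symmetry used in Subsection~\ref{subsect:lfl-1} we may take $x,y>0$. The case $x=1$ forces $y=1$, so we assume $x\ge 2$.

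Lemma~\ref{lem:Y0-2} (whose proof repeats that of Lemma~\ref{lem:Y0}, with $g(y)=2y^p-(k+2)$, so $m=-k$ and the leading coefficient $2$ only shrinks $Y_0$) shows that $y/x=p_n/q_n$ is a convergent of $\xi$. Next, Lemma~1.1(i) of \cite{TdW} gives the upper bound
\[
\left|\xi-\frac{p_n}{q_n}\right|<C_1(k,p)\,q_n^{-p},
\]
where $C_1(k,p)$ is a constant of the shape $2^{p-1}(k/2)^{1/p}/p$. The classical continued-fraction bound gives the lower bound
\[
\frac{1}{(a_{n+1}+2)q_n^2}<\left|\xi-\frac{p_n}{q_n}\right|.
\]
For each prime $k$ in range (together with $k=9,15$) and each prime $7\le p\le P_{k,1001}$ from Table~\ref{table:thm-2-2}, we would compute in $\mathtt{PARI/GP}$ the expansion of $\xi$ until $q_n>10^{100,000}$ and record the maximal partial quotient $A$. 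Combining the two bounds gives
\[
q_n^{p-2}<(A+2)\,C_1(k,p),
\]
which yields a tiny bound on $x=q_n$, typically $x\le 2$.

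For such small $x$, a direct check of $(k+2)x^p-2y^p=k$ finishes the argument. Concretely, with $x=2$ one needs $2y^p=(k+2)2^p-k$; for $k$ odd the right-hand side is odd, so there is no solution. The same check applies to the finitely many $x$ that remain in any case. The range $7\le p<17$, below the primes treated in the earlier lemma, is included in the same computation. Each $(k,p)$ pair is a finite, certified computation, provided enough working precision is used (about $2.2\times 10^5$ digits, as before).

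The main obstacle is computational rather than theoretical. There are roughly $26$ values of $k$, each with up to about $150$ primes $p$, and each requires a continued-fraction expansion to $10^{100,000}$-sized denominators at high precision. One also has to make sure that the maximal partial quotient $A$ is never so large that the bound on $q_n$ exceeds the trivially searchable range. If some $(k,p)$ produced an exceptionally large partial quotient, I would examine that convergent directly by testing whether $(q_n,p_n)$ satisfies \eqref{eq:Thue2}, and then continue the argument past it.
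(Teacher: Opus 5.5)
Your proposal is correct and is essentially the paper's argument. The paper proves this lemma only by calling it a variant of Lemmas~\ref{lem:Y0} and~\ref{lem:cf-LB}: expand $((k+2)/2)^{1/p}$ until the denominators exceed $10^{100,000}$, take the maximal partial quotient $A$, and compare the lower bound $1/((A+2)q_n^2)$ with the Tzanakis--de Weger upper bound $C_1 q_n^{-p}$, which forces $x$ to be tiny, and then check the remaining $x$ directly. Your additions are harmless refinements rather than a different route: the gcd reduction, the parity observation that $x$ must be odd (so $x=2$ is impossible), and explicitly covering $7\le p<17$, a range the proof of Lemma~\ref{lem:cf-LB} silently skips by starting at $p=17$.
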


Applying Laurent's theorem again (using our Maple function \verb+check_1()+ in
\verb+Laurent-Thm-2-estimate.txt+), we obtain the bounds $p \leq P_{k,10^{100,000}}$
in Table~\ref{table:thm-2-2}.

\section{The case of $s = 2k + 4$, with $k$ a prime} \label{calculations for s even}

\subsection{A general setting} \label{subsect:2q+4 q prime}

The equation~\eqref{eq:polygonal} with $s=2k+4$ takes the form 
\begin{equation}
\label{eq:polygonal 2k+4} 
n((k+1)n - k) = t^{p}, \quad n,t \in \bbZ, p>2. 
\end{equation}
If $k$ is an odd prime, then $\gcd(n,(k+1)n-k) = 1$ or $k$.

(i) Assume that $k \nmid n$. In this case there are coprime integers $x$ and $y$ such that 

\begin{equation}\label{eq:polygonal even s Case I} 
(k+1)x^{p} - y^{p} = k.
\end{equation} 

(ii) Assume that $k || n$. In this case there are coprime integers $x$ and $y$ such that 

\begin{equation}\label{eq:polygonal even s Case II} 
(k+1)x^{p} - k^{p-2}y^{p} = 1.
\end{equation} 

Multiplying the equation~\eqref{eq:polygonal even s Case II} by $k^{2}$ we are led to
\begin{equation}\label{eq:polygonal even s Case IIA} 
k^{2}(k+1)X^{p} - Y^{p} = k^{2}. 
\end{equation} 

(iii) Assume that $k^{2} | n$. In this case there are coprime integers $x$ and $y$ such that 

\begin{equation}\label{eq:polygonal even s Case III} 
(k+1)k^{p-2}x^{p} - y^{p} = 1.
\end{equation} 

To prove our results about these Thue equations and those in subsequent sections,
we will use the following result of Mignotte \cite{Mi1}.

\begin{lem}
\label{lem:mignotte-binomial-bnd}
Let $F(x,y) = ax^{n} - by^{n}$ be a binary form of degree $n \geq 3$, with
positive integer coefficients $a$ and $b$, $a \neq b$. Put $A = \max \{a, b, 3 \}$
and
\[
\lambda = \log \left( 1 + \frac{\log A}{|\log (a/b)|} \right).
\]

Suppose that $F(x,y)=c$ for any integer $c$ with $y>|x|>0$. Then
\[
n \leq \max \left\{ 3 \log \left( 1.5|c/b| \right), 7400 \frac{\log A}{\lambda} \right\}.
\]
\end{lem}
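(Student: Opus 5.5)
This lemma is a result of Mignotte \cite{Mi1} that the paper quotes rather than proves. A self-contained proof would follow the standard route: the hypergeometric/Baker dichotomy, with the large-$n$ regime handled by a lower bound for a linear form in two logarithms. The plan is as follows.

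\textbf{Step 1: small $|ax^n-by^n|$ relative to $by^n$.} Suppose $F(x,y)=c$ with $y>|x|>0$. Write
\[
\frac{a}{b}\Bigl(\frac{x}{y}\Bigr)^{n} = 1+\frac{c}{by^{n}}.
\]
If $|c/(by^n)|\ge 1/2$, then $y^n\le 2|c/b|$. Since $y\ge 2$, this gives $2^n\le 2|c/b|$, and hence $n\le 3\log(1.5|c/b|)$ after a crude comparison. This step produces the first term of the maximum.

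\textbf{Step 2: the linear form.} Otherwise $|c/(by^n)|<1/2$, and, exactly as in \eqref{eq:lfl} (using Exercise~1.1(b) of \cite{Wald}), we get
\[
|\Lambda|\le \frac{2|c|}{by^{n}}, \qquad \Lambda = n\log\Bigl|\frac{x}{y}\Bigr| + \log\frac{a}{b}.
\]
Note that $\Lambda\ne0$, since $c\ne0$ is implicit. (If $c=0$, then $a x^n=by^n$; this forces $a/b$ to be a perfect $n$-th power, and that case is treated separately or is vacuous under $y>|x|$.)

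\textbf{Step 3: Laurent's bound.} Apply Laurent's Theorem~2 with $\alpha_1=a/b$, $\alpha_2=y/|x|$, $b_1=1$, $b_2=n$ and $D=1$. Then $h(\alpha_1)\le\log A$ and $h(\alpha_2)=\log y$. The size relation $n\log(y/|x|)\approx|\log(a/b)|$ lets us choose $a_2$ of order $\log y$ with a small correction. Laurent gives
\[
\log|\Lambda|\ge -C\,a_1a_2\,\bigl(\log b'\bigr)^2,
\]
where $b'$ is roughly $n/a_2+1/a_1$. Combined with the upper bound $-n\log y+\log(2|c|/b)$, and absorbing the $c$ term using the Step~1 dichotomy, this yields
\[
n \ll \log A\,(\log b')^2 .
\]

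\textbf{Step 4: the main obstacle.} The hard part is removing the $(\log b')^2$ factor to reach the linear bound $7400\log A/\lambda$. The quantity $\lambda=\log\bigl(1+\log A/|\log(a/b)|\bigr)$ measures how close $a/b$ is to $1$, and the intended mechanism is the following. The relation $n\log(y/|x|)\approx|\log(a/b)|$ means $b'$ is essentially $\log A/|\log(a/b)|$ up to constants. I would therefore choose Laurent's parameter $\varrho$ (and $\mu$) adaptively, in terms of $\lambda$, so that the bound becomes $\log A\cdot n/\lambda$-type rather than quadratic in $\log b'$. Concretely, I would take $a_1$ comparable to $\varrho|\log(a/b)|+2\log A$ with $\varrho\approx e^{\lambda}$. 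The explicit numerical constant $7400$ would then come from optimizing these choices. If this optimization fails to give a linear bound, the fallback is the hypergeometric (Thue–Siegel) method for the regime where $a/b$ is close to $1$, which is where $\lambda$ is large.
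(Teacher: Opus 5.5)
Citing Mignotte, as your opening sentence does, matches the paper exactly: its proof is just ``This is Theorem~1 of \cite{Mi1}'', and that suffices. The self-contained sketch you add, however, does not prove the lemma. Step~4 is never carried out. Neither the factor $1/\lambda$ nor the constant $7400$ is derived, and the fallback to a hypergeometric argument shows the argument is not closed.

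The obstacle you describe is also misdiagnosed, because your $b'$ is swapped. Take $\Lambda=b_2\log\alpha_2-b_1\log\alpha_1$ with $\alpha_1=a/b$, $b_1=1$, $\alpha_2=y/|x|$, $b_2=n$. Laurent's quantity is then $b'=b_1/a_2+b_2/a_1\approx n/\log A$, not $n/a_2+1/a_1$. With the correct $b'$, Step~3 gives an inequality of the shape $T\le C(\log T+C')^2$ for $T=n/\log A$. So $T$ is bounded and $n\ll\log A$ comes for free; there is no $(\log b')^2$ factor to remove. Likewise, $b'$ is not ``essentially $\log A/|\log(a/b)|$''.

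Your choice $\varrho\approx e^{\lambda}$ is nevertheless the right one, for a different reason. Take $\varrho=1+\log A/|\log(a/b)|$, so that $\log\varrho=\lambda$ exactly. This keeps $a_1=\varrho|\log(a/b)|-\log(a/b)+2h(a/b)$ of size $O(\log A)$. Laurent's main term is of order $a_1a_2(h+\lambda_L/\sigma)^2/\lambda_L^3$, with $\lambda_L=\sigma\log\varrho$ and $h\approx\max\{\log b'+\log\lambda_L,\lambda_L\}$. It then gives $n\ll\log A/\lambda$ whether or not $\log b'$ exceeds $\lambda_L$. To turn this into a proof you must still:
\begin{itemize}
\item check that the required $a_2\ge(\varrho-1)\log(y/|x|)+2\log y$ can be taken $O(\log y)$;
\item verify Laurent's side condition $a_1a_2\ge\lambda_L^2$;
\item handle the multiplicatively dependent case separately;
\item track constants to reach $7400$.
\end{itemize}
That work is precisely the content of \cite{Mi1}.

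Second, your handling of $c$ has a gap. Splitting on whether $|c/(by^n)|\ge 1/2$ does not let you absorb $\log(2|c|/b)$. In your second case, $\log(2|c|/b)$ can still be $n\log y-O(1)$, and then $\log|\Lambda|\le -n\log y+\log(2|c|/b)$ carries no information. The split should instead be on the first term of the maximum. If $n>3\log(1.5|c/b|)$, then $|c/b|<e^{n/3}/1.5$. This makes $|c/(by^n)|<(e^{1/3}/2)^n/1.5<1/2$ automatic, since $y\ge 2$ and $n\ge 3$. It also gives $n\log y-\log(2|c|/b)>n(\log y-1/3)-\log(4/3)$, a fixed proportion of $n\log y$ up to an absolute constant.

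Finally, $c=0$ is not vacuous under $y>|x|$: for example $a=2^n$, $b=1$, $(x,y)=(1,2)$. There $\Lambda=0$, so this case needs its own (easy) check that $n\le 7400\log A/\lambda$.
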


\begin{proof}
This is Theorem~1 of \cite{Mi1}.
\end{proof}

\subsection{Presentation of the calculations} \label{subsect:calculations:2q+4 q odd prime}

\begin{lem}
\label{lem: even s Case I}
Let $5 \leq k \leq 97$ be a prime.

\noindent
{\rm (a)}
The equation~\eqref{eq:polygonal even s Case I} has a solution $(x,y) = (1,1)$
for any prime $p$. 

\noindent
{\rm (b)}
Let $3 \leq p \leq 7$. The equation~\eqref{eq:polygonal even s Case I} has other
non-zero solutions only for the following two pairs $(k,p)$: $(5,3)$ (solution
$(x,y) = (-257,-467)$); $(13,3)$ (solutions $(x,y) = (-2,-5), (-1,-3)$).

\noindent
{\rm (c)} If we assume GRH, then the equation~\eqref{eq:polygonal even s Case I}
has no further solutions with $11\le p\le 31$.
\end{lem}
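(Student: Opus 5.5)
Part~(a) is immediate: substituting $(x,y)=(1,1)$ into \eqref{eq:polygonal even s Case I} gives $(k+1)-1=k$ for every $p$.

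For part~(b), I fix $k$ and $p\in\{3,5,7\}$ and treat \eqref{eq:polygonal even s Case I} as the binomial Thue equation $(k+1)x^p-y^p=k$ of degree $p$. For $p=3$, the cleanest route is to pass to an elliptic curve. Multiplying by $(k+1)^2$ and putting $X=(k+1)x$ gives
\[
X^3-(k+1)^2y^3=k(k+1)^2 .
\]
With $u=X/y$ this is a Mordell-type curve, and its integral points can be found with $\mathtt{MAGMA}$ (\verb+IntegralPoints+ or \verb+SIntegralPoints+ on the associated $Y^2=X^3+D$ model). Alternatively, one can run $\mathtt{PARI/GP}$'s \verb+thueinit+/\verb+thue+ directly on $(k+1)x^3-y^3=k$, or use $\mathtt{MAGMA}$'s \verb+ThueEquation+ machinery, which is unconditional. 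For $p=5,7$ I would call the unconditional Thue solver in $\mathtt{MAGMA}$ (Bilu–Hanrot style, with Baker bounds and LLL reduction) for each of the 23 primes $k$. The computation should return only $(1,1)$, except $(-257,-467)$ for $(k,p)=(5,3)$ and $(-2,-5),(-1,-3)$ for $(k,p)=(13,3)$, all of which are easily checked by hand. For example, $14\cdot(-8)+125=13$ and $14\cdot(-1)+27=13$.

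For part~(c), fix a prime $p$ with $11\le p\le 31$. Lemma~\ref{lem:Y0} shows that any solution with $|x|\ge2$ has $y/x$ equal to a convergent of $(k+1)^{1/p}$. Lemma~\ref{lem:cf-LB} then excludes $|x|<10^{100,000}$, but it leaves a gap between that bound and the general Thue bound. This gap is closed by solving the Thue equation completely with \verb+thueinit()+ in $\mathtt{PARI/GP}$ on the field $\bbQ((k+1)^{1/p})$. Because the right-hand side $k$ is not $\pm1$ and the class group or unit computation for fields of degree up to $31$ is certified only under GRH, as explained in the Remark of the introduction, the result is conditional on GRH. I expect the solver to confirm that $(1,1)$ is the only solution for each of these $(k,p)$; the case $x=1$ forces $y=1$, and the case $x<0$ is symmetric.

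The main obstacle is the computational cost of the GRH-conditional class group and unit computation in degrees $23$–$31$, especially for larger $k$. To keep this manageable, I would try a Sophie Germain type local sieve first, which may eliminate many $(k,p)$ outright. This takes auxiliary primes $q=2jp+1$ and checks that $(k+1)x^p-y^p\equiv k \pmod q$, together with coprimality, forces $q\mid xy$ in a contradictory way. I would reserve \verb+thueinit+ (using Bill Allombert's parallel branch) for the remaining cases.
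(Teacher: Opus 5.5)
Your proposal follows the paper's proof: (a) by substitution, (b) by $\mathtt{MAGMA}$'s unconditional Thue solver, and (c) by solving $(k+1)x^p-y^p=k$ completely with \texttt{thueinit()} in $\mathtt{PARI/GP}$, which is GRH-conditional because the right-hand side is $k\neq\pm 1$ (the detour through Lemmas~\ref{lem:Y0} and~\ref{lem:cf-LB} is not needed). One correction: your Sophie Germain pre-sieve cannot eliminate any pair $(k,p)$ here, because $(x,y)=(1,1)$ solves the equation for every $(k,p)$ and so solves it modulo every auxiliary prime $q$ with $q\nmid xy$; the paper uses that sieve only for equations such as \eqref{eq:polygonal even s Case IIA}, which have no solutions, so in Case~I all of (c) rests on \texttt{thueinit()}.
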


\begin{proof}
(a) is easy, and (b) follows by using the Thue equation solver in $\mathtt{MAGMA}$
\cite{Magma}.

For (c), we used the \verb+thueinit()+ function in $\mathtt{PARI/GP}$. The computations
with $p = 31$ were done on a supercomputer with 220 cores and 300 GB memory for
one day. Since the supercomputer resources were insufficient for the case $p\ge 37$
(and all primes $5\le k \le 97$), we stopped at $p = 31$.
\end{proof}

\begin{rem-nonum}
(i) We will use this idea in the proofs of Lemmas~\ref{lem: odd s Case Ia}, \ref{lem: odd s Case Ib},
Section~\ref{subsect:calculations:s=13} (i), Section~\ref{subsect:calculations:s=16} (i),(ii),(iii),(vii),
and Section~\ref{subsect:calculations:s=19} (i),(vii).

(ii) Let $5 \leq k \leq 97$, and $11\le p\le 31$ be primes. For the Thue equations~\eqref{eq:polygonal even s Case I},
the class number $h$ is always one (under GRH) except for the following cases:
$(p,k,h) \in \{(11,37,2), (11,59,11), (19,73,19)\}$. 
\end{rem-nonum}

\begin{lem}
\label{lem: even s Case II} 
Let $5 \leq k \leq 97$ be a prime.

\noindent
{\rm (a)}
The equation~\eqref{eq:polygonal even s Case II} has a unique solution $(x,y) = (1,1)$ for $p = 3$. 

\noindent
{\rm (b)}
The equation~\eqref{eq:polygonal even s Case II} has no solution for any prime $p \geq 5$.
\end{lem}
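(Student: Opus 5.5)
For (a) I would reduce to Bennett's theorem quoted at the start of Section~\ref{sect:p-power-even}. When $p=3$ we have $k^{p-2}=k$, so \eqref{eq:polygonal even s Case II} becomes $(k+1)x^{3}-ky^{3}=1$. This is an instance of $|(a+1)x^{n}-ay^{n}|=1$ with $a=k$, and Bennett \cite{Ben} shows it has no solutions with $|xy|>1$. So only the cases $|xy|\le 1$ remain, and these are finite:
\begin{itemize}
\item $x=0$ gives $-ky^{3}=1$, which is impossible;
\item $y=0$ gives $(k+1)x^{3}=1$, which is impossible;
\item $(x,y)=(-1,-1)$ gives $-1$;
\item mixed signs give $\pm(2k+1)$.
\end{itemize}
Hence $(1,1)$ is the unique solution.

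For (b) the coefficient $k^{p-2}$ depends on $p$, so I would first bound $p$ using the fixed-coefficient form \eqref{eq:polygonal even s Case IIA}: $k^{2}(k+1)X^{p}-Y^{p}=k^{2}$. The dependence on $p$ is only apparent: $X=x$ and $Y=k\,y$ (up to the usual normalisation), and the coefficients $a=k^{2}(k+1)$, $b=1$, $c=k^{2}$ no longer involve $p$.
\begin{itemize}
\item First treat directly the solutions with $|Y|\le|X|$. Here $|Y^{p}|\le|X|^{p}$ forces $|X|$ tiny, and one checks by hand that none occur for $p\ge5$. Also handle the sign change $(X,Y)\mapsto(-X,-Y)$.
\item Then apply Lemma~\ref{lem:mignotte-binomial-bnd} with $A=k^{2}(k+1)$ and $\lambda=\log 2$ (since $b=1$ gives $|\log(a/b)|=\log A$). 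This yields
\[
p\le \max\{3\log(1.5k^{2}),\,7400\cdot 3\log(98)/\log 2\},
\]
an explicit bound of order $1.5\cdot 10^{5}$, uniformly for all $k\le 97$.
\end{itemize}

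It then remains to rule out each prime $5\le p\le 1.5\cdot10^{5}$ for each of the $23$ primes $k$. I would do this with a Sophie Germain type local obstruction.
\begin{itemize}
\item For each pair $(k,p)$, search for an auxiliary prime $q=2\ell p+1$ with $\ell$ small, such that $q\nmid k(k+1)$.
\item Modulo $q$, the nonzero $p$-th powers form the subgroup $\mu_{2\ell}$ of order $2\ell$, so $X^{p},Y^{p}\in\{0\}\cup\mu_{2\ell}$.
\item Check that $k^{2}(k+1)u-v\equiv k^{2}\pmod q$ has no solution with $u,v\in\{0\}\cup\mu_{2\ell}$. The cases $u=0$ and $v=0$ need $-k^{2}$, respectively $(k+1)^{-1}$, not to lie in $\mu_{2\ell}$.
\end{itemize}
The original congruences give extra leverage. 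Modulo $k^{p-2}$ we have $(k+1)x^{p}\equiv1$, and modulo $k+1$ we have $y^{p}\equiv1$. If a single $q$ fails, I would combine the information from several auxiliary primes $q$ for the same $p$.

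The main obstacle is this last step. It must succeed for every one of roughly $23\times 1.4\cdot10^{4}$ pairs $(k,p)$. It depends on a suitable small $\ell$ existing each time, which is heuristically likely but not guaranteed, and on the total computation (the MAGMA Sophie Germain code mentioned in the acknowledgements) being feasible. The primes $p=5,7$, and any isolated stubborn $p$, I would instead settle with the MAGMA Thue solver applied to \eqref{eq:polygonal even s Case IIA}.
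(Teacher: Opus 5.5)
Your part (a) is correct and takes a different route from the paper. The paper does not treat $p=3$ separately: it survives the sieve (since $(1,1)$ is a solution) and the cubic Thue equations are then solved with $\mathtt{PARI/GP}$. Your appeal to Bennett, plus the sign check for $|xy|\le 1$, settles (a) with no computation at all, which is cleaner. For (b), your rescaling to \eqref{eq:polygonal even s Case IIA}, Mignotte's bound with $\lambda=\log 2$ (giving $p<10{,}700\log(k^{2}(k+1))$), and the sieve over $q=2\ell p+1$ are exactly steps (i) and (ii) of the paper's proof.

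The gap is in how you dispose of what the sieve leaves over. The survivors are not only $p=5,7$: with $\ell\le 150$ the pairs $(k,p)=(29,19)$, $(67,11)$ and $(67,19)$ also survive.

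\begin{itemize}
\item Your remedy of ``combining several auxiliary primes $q$'' gives nothing. By the Chinese remainder theorem, solvability modulo $q_{1}q_{2}$ is the same as solvability modulo each $q_{i}$ separately. The congruences modulo $k^{p-2}$ and $k+1$ are consequences of the equation, not new constraints at $q$.
\item The paper instead inserts a full local solvability test, \texttt{HasPointsEverywhereLocally()}. This catches obstructions a sieve over $q\equiv 1\pmod p$ never examines. For example, $(29,19)$ is eliminated because there are no $19$-adic solutions, an obstruction at $q=p$.
\item More seriously, for the hard remaining case $(67,19)$ you propose running a Thue solver on \eqref{eq:polygonal even s Case IIA}, whose right-hand side is $k^{2}$. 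Solving that unconditionally means deciding unconditionally whether the relevant ideals of norm $k^{2}$ in a degree-$19$ field are principal. That requires certified class group data, which is computationally out of reach; the paper notes that even \texttt{bnfcertify()} is infeasible here. So your route would give (b) at best under GRH, whereas the lemma is unconditional. For $p=5,7$ your fallback is harmless.
\end{itemize}

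The missing idea is the paper's step (iv): go back to \eqref{eq:polygonal even s Case II}, whose right-hand side is $1$. Then the output of \texttt{thueinit()} in $\mathtt{PARI/GP}$ is unconditional without any class group certification. Even so, $(67,19)$ needed a supercomputer.
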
 

\begin{proof}
We proceed in four steps. For the first three steps, we work instead with equation~\eqref{eq:polygonal even s Case IIA}.

(i) For each $k$, we start by using Lemma~\ref{lem:mignotte-binomial-bnd}
to obtain an upper bound on $p$ such that \eqref{eq:polygonal even s Case IIA}
may have a non-trivial integer solution.

In the notation of Lemma~\ref{lem:mignotte-binomial-bnd}, we have $a=k^{2}(k+1)$,
$b=1$ and $c=k^{2}$. So $A=k^{2}(k+1)=a/b$. Hence $\lambda=2$
and $p<10,700 \log \left( k^{2}(k+1) \right)$.

\vspace*{1.0mm}

(ii) For each prime $k$ with $5 \leq k \leq 97$ and all primes $p < 10,700 \log \left( k^{2}(k+1) \right)$,
we applied a modular arithmetic argument (a Sophie Germain type argument) like
that in the proof of Theorem~1 in \cite{GP} (see the opening paragraph on page~492 there)
to the equation~\eqref{eq:polygonal even s Case IIA}, except that we do not do
the modulo $n^{2}$ check there (modulo $p^{2}$ in our notation here).

This entailed checking if \eqref{eq:polygonal even s Case IIA} is solvable modulo $q$
for all primes $q=2pr+1$, with $1 \leq r \leq 150$. Rather than checking if this
equation is solvable for all $q^{2}$ pairs $(x,y)$ modulo $q$, we note that $u \neq 0$
is a $p$-th power modulo $q$ if and only if $u^{(q-1)/p} \equiv 1 \pmod{q}$
(see, for instance Proposition~4.2.1 of \cite{IR}).
There are at most $2r$ such values of $u$, so together with $0$, we only need to
check at most $(2r+1)^{2}$ possible values rather than $q^{2}$ possible values
for each congruence.

This eliminated all but $47$ of the pairs $(k,p)$. With the exceptions of $(k,p)=(29,19)$,
$(k,p)=(67,11)$ and $(k,p)=(67,19)$, we had $p \leq 7$ for these remaining pairs.
$\mathtt{PARI/GP}$  code took 36 seconds on PV's laptop.

\vspace*{1.0mm}

(iii) We use the $\mathtt{MAGMA}$  \cite{Magma} function \verb!HasPointsEverywhereLocally()!
to eliminate some of the remaining pairs $(k,p)$ after step~(ii).

The check in step~(ii) is done first, as step~(iii) can be expensive time-wise,
especially as $p$ grows, since we have to check local solvability of the equations
for all primes $q$ less than $O \left( p^{4} \right)$ -- see, for instance,
Theorem~6.4.2 in \cite{Cohen}.

This left $45$ pairs $(k,p)$. Most importantly, the pair $(k,p)=(29,19)$ is
eliminated (it has no solutions in $\bbQ_{19}$).

Note that the function, \verb!HasPointsEverywhereLocally()!, checks for
solvability of the homogenisation of our Thue equations, rather than the Thue
equations themselves. For some pairs, $(k,p)$, this can result in local solvability
of the homogenised equation, while the Thue equations themselves do not have
local solutions. E.g., $(k,p)=(5,17)$, where the homogenised equation has the
solution $(x,y,z)=(1,57,0)$ modulo $103$.

\vspace*{1.0mm}

(iv) We solved the equations of the form \eqref{eq:polygonal even s Case II} 
for the remaining pairs $(k,p)$ using the Thue equation solver in $\mathtt{PARI/GP}$. The
benefit of using equation~\eqref{eq:polygonal even s Case II} is that the
right-hand side of the Thue equation is $1$, so we get unconditional results
from the \verb!thueinit()! function in $\mathtt{PARI/GP}$ without setting the flag argument
to that function to be non-zero. This makes the calculation much faster, and
hence feasible. See \url{https://pari.math.u-bordeaux.fr/dochtml/ref-stable/Polynomials_and_power_series.html#thueinit}
for details.

With the exception of $(k,p)=(67,19)$,
we had $3 \leq p \leq 7$ for the remaining $45$ pairs after step~(iii).

This calculation (excluding $(k,p)=(67,19)$) took 60 seconds on PV's laptop, while the case $(67,19)$ was solved on a supercomputer.
\end{proof}

\begin{lem} \label{lem: even s Case III}
Let $5 \leq k \leq 97$ be a prime.

\noindent
{\rm (a)} The equation~\eqref{eq:polygonal even s Case III} has a solution $(x,y) = (0,-1)$ for any prime $p$. 

\noindent
{\rm (b)} Let $3 \leq p \leq 23$. The equation~\eqref{eq:polygonal even s Case III} has a non-zero solution only 
for $(k,p) = (13,3)$, namely $(x,y) = (3,17)$. 

\noindent
{\rm (c)} The equation~\eqref{eq:polygonal even s Case III} has no non-zero solution 
for any $p\geq 29$, with possible exceptions $(k,p) = (k,k)$ (with $29 \leq p \leq 97$ and $p\neq 31$)
or $(k,p) \in \{(59,29), (83,41) \}$.

\noindent
{\rm (d)} The weak effective $abc$ conjecture ($abc(r)$) implies that the equation~\eqref{eq:polygonal even s Case III} has no solutions for $r= 1.63$ when $(k,p) = (k,k)$ (with $29 \leq p \leq 97$ and $p\neq 31$)
or $(k,p) \in \{(59,29), (83,41) \}$.
\end{lem}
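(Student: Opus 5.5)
Part~(a) is immediate: with $x=0$ the equation~\eqref{eq:polygonal even s Case III} reads $-y^{p}=1$, so $y=-1$. For the remaining parts I would follow the four-step scheme used in the proof of Lemma~\ref{lem: even s Case II}, now applied directly to \eqref{eq:polygonal even s Case III}. Its right-hand side is $1$, so the $\mathtt{PARI/GP}$ Thue solver gives unconditional answers.

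\textbf{Bounding $p$.} For each prime $k$ I would first apply Lemma~\ref{lem:mignotte-binomial-bnd} with $a=(k+1)k^{p-2}$, $b=1$, $c=1$. Here $A=a$ depends on $p$, so the bound has to be handled with care. We have $\log A=\log(k+1)+(p-2)\log k$ and $\log(a/b)=\log A$, so $\lambda=\log 2$. The resulting bound $p\le 7400\log A/\log 2$ is circular, because $\log A$ grows linearly in $p$.

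I therefore expect the first obstacle to be producing a usable upper bound on $p$. The approach I would try first is to rewrite the equation as $(k+1)k^{p-2}x^{p}-y^{p}=1$, i.e. $(k+1)k^{-2}(kx)^{p}-y^{p}=1$. Multiplying by $k^{2}$ gives
\[
(k+1)X^{p}-k^{2}y^{p}=k^{2},\qquad X=kx .
\]
Its coefficients $k+1$ and $k^{2}$ are independent of $p$, so Lemma~\ref{lem:mignotte-binomial-bnd} applies with $A=k^{2}$ and $\lambda=\log\bigl(1+2\log k/|\log((k+1)/k^{2})|\bigr)$. This yields an explicit bound of the shape $p<C\log k$ with $C$ of order $10^{4}$. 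Alternatively one could redo the Laurent estimates of Section~\ref{sect:p-power-even} for the linear form $p\log(y/(kx))-\log((k+1)/k^{2})$.

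\textbf{Local and Thue checks.} For every $p$ up to that bound I would run the Sophie Germain type test: check local solvability of the equation modulo primes $q=2pr+1$ with $r\le150$, using the fact that the $p$-th powers modulo $q$ are exactly the $u$ with $u^{(q-1)/p}\equiv1$. Surviving pairs would then be filtered with \verb!HasPointsEverywhereLocally()!. For $3\le p\le 23$, which is part~(b), I would simply solve the Thue equations with $\mathtt{PARI/GP}$ or $\mathtt{MAGMA}$. This should produce only the solution $(3,17)$ when $(k,p)=(13,3)$; indeed $14\cdot13\cdot27=4914=17^{3}+1$.

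\textbf{Part~(c) and the main obstacle.} For part~(c), the pairs that survive all local tests with $p\ge29$ should be the listed exceptions, together with any remaining pairs that are small enough for the Thue solver. The case $p=k$ is the real obstacle. There the equation becomes $(k+1)k^{k-2}x^{k}-y^{k}=1$. Reducing modulo $k$ gives $y^{k}\equiv-1$, which is always solvable, and the $q$-adic checks also tend to fail to obstruct. The degree is also too large for Thue solving. So these pairs, and the two sporadic pairs, must be left as exceptions, and I would explain why the local methods cannot eliminate them.

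\textbf{Part~(d).} Suppose $x\neq0$. Apply $abc(r)$ to $y^{p}+1=(k+1)k^{p-2}x^{p}$, with $c=\max\{|y|^{p},\,(k+1)k^{p-2}|x|^{p}\}$ after adjusting signs. Since $\rad(abc)\le k(k+1)|xy|$ and $|y|\approx k^{(p-2)/p}(k+1)^{1/p}|x|$, we get
\[
L(a,b,c)\ \ge\ \frac{p\log|y|}{\log\bigl(k(k+1)|x||y|\bigr)}\ \approx\ \frac{p}{2+o(1)} ,
\]
using that $\log|y|\gtrsim\log k$ dominates. For $p\ge29$ this is far above $1.63$, and checking the inequality explicitly for the finitely many listed pairs, with the worst case $|x|=1$, gives a contradiction.
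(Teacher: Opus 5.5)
\textbf{Assessment.} Parts (a) and (d) are fine and essentially match the paper. In (d) the ratio is nearer $p/3$ than $p/2$ when $|x|$ is small, since $\log|y|\approx\log k+\log|x|$ while $\log(k(k+1)|xy|)\approx 3\log k+2\log|x|$. That is still far above $1.63$ for $p\ge 29$, so the conclusion stands.

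\textbf{The gap.} The problem is your route to (b) and (c). Equation~\eqref{eq:polygonal even s Case III} has the global solution $(x,y)=(0,-1)$ for every odd $p$. Your rescaled equation $(k+1)X^{p}-k^{2}y^{p}=k^{2}$ likewise has $(X,y)=(0,-1)$.
\begin{itemize}
\item The equation is therefore solvable modulo every prime $q=2pr+1$: take $x\equiv 0$, $y\equiv -1$.
\item \texttt{HasPointsEverywhereLocally()} always succeeds, because the homogenised curve has the rational point $(0:-1:1)$.
\item So your Sophie Germain sieve and local tests eliminate no pair $(k,p)$ at all. This is exactly what separates this lemma from Lemma~\ref{lem: even s Case II}, whose equation has no trivial solution once $p\ge 5$.
\end{itemize}
Your $p$-independent Mignotte bound (roughly $p<6\cdot 10^{4}$) is correct. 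But it leaves you Thue equations of degree up to tens of thousands for every $k$, which is hopeless: the paper could not even finish degree $29$.

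Nothing in your plan singles out the exceptional set in (c) either. The congruence $y^{p}\equiv -1 \pmod k$ is solvable for every $p$, not only $p=k$. You also give no reason why $(59,29)$ and $(83,41)$ in particular should survive.

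\textbf{The missing ingredient.} You need a global argument that is not blocked by the trivial solution. The paper puts $X=-y$, $Z=-x$, $B=(k+1)k^{p-2}$, so the equation becomes $X^{p}-1=BZ^{p}$. It then applies Theorem~1.3 of Bartolom\'e--Mih\u{a}ilescu \cite{BM}.
\begin{itemize}
\item Behind that theorem is the factorisation $X^{p}-1=(X-1)\Phi_{p}(X)$, with $\gcd(X-1,\Phi_{p}(X))\mid p$ and every prime factor of $\Phi_{p}(X)$ equal to $p$ or $\equiv 1\pmod p$.
\item The theorem cuts the problem down to $55$ pairs. These are the pairs with $p\le 7$, and those where its hypotheses fail, essentially $p=k$ or $p\mid k-1$ (note $29\mid 58$ and $41\mid 82$).
\item \texttt{MAGMA} handles $p\le 7$, and \texttt{thueinit()} settles $12$ of the remaining pairs. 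Those it could not finish are left as the exceptions in (c).
\end{itemize}
Without this theorem, or an equivalent cyclotomic argument, your approach cannot prove (c). For (b), you would also have to run the Thue solver on all $115$ pairs with $11\le p\le 23$, instead of only the few that survive the reduction.
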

    
\begin{proof}
Here we first apply Theorem~1.3 of Bartolom\'{e} and Mih\u{a}ilescu \cite{BM}, which reduces the problem to 55 pairs.
Next, we use the Thue equation solver in $\mathtt{MAGMA}$  for the case $p\le 7$.
Finally, for the remaining 30 pairs, we employ $\mathtt{PARI/GP}$.
However, $\mathtt{PARI/GP}$ (including supercomputer) succeeds in solving only 12
pairs using \verb+thueinit()+, namely $(k,p)=\{(11,11),(13,13),(17,17),(19,19),(23,11), (23,23),\\ (31,31), (47,23), (53,13), (67,11),(79,13),(89,11)\}$. We expect that there are no non-zero solutions for the remaining cases in part~(c),
but we were unable to confirm this numerically. The case $(k,p)=(47,23)$ was resolved on the supercomputer in 2 hours, 56 minutes, and 54 seconds.
However, for the cases $(k,p)\in\{(59,29),(83,41)\}$ and $(k,p)=(k,k)$ with $29 \leq p \leq 97$ and $p \neq 31$, our supercomputer resources were insufficient, even when using 128-220 cores and 350 GB of memory for 315 CPU days.

For the proof of part~(d), we apply a similar argument as in the proof of Lemma~\ref{lem:effective abc}.
Here $a=(k+1)k^{p-2}x^{p}$, $b=-1$ and $c=y^{p}$. So $L(a,b,c) = \log(c)/ \log \rad(abc)
=p\log(y)/\log \rad \left( k(k+1)xy \right)>p\log(y)/\log \left( k(k+1)xy \right)$. 
\end{proof}

\subsection{Generalisations}
\label{subsect:calculations:s=12}

The general setting presented in Subsection~\ref{subsect:2q+4 q prime} can be
easily generalised to the case $s = 2q^{r} + 4$. Let us consider in some detail
the special case $s = 12$. In this case, the equation~\eqref{eq:polygonal} takes
the form 
\begin{equation}
\label{eq:polygonal12} 
n(5n-4) = t^{p}, \quad n,t \in \bbZ, p>2. 
\end{equation}

(i) Assume that $n$ is odd. In this case there exist coprime integers
$x$ and $y$ such that $5x^p-y^p=4$. For $3\le p \le 13$ we use the Thue
equation solver in $\mathtt{MAGMA}$ \cite{Magma} to check that $(x, y) = (1, 1)$ is the only
solution to this equation. But for the case $17\le p \le 53$, we use
$\mathtt{PARI/GP}$ (thueinit, flag=0) (including supercomputer), to check that
$(x, y) = (1, 1)$ is the only solution to this equation (these computations are
under GRH). The case $p\ge 59$ is discussed in Section~\ref{sect:p-power-even}.

(ii) Assume that $2|| n$. In this case, there exist coprime integers $x$ and $y$ with $2 \nmid x$ such that 
$5x^{p} - 2^{p-2} y^{p} = 2$, which is impossible for $p > 2$ and odd $x$. 

(iii) Assume that $2^{2} || n$. In this case, there exist coprime integers $x$ and $y$ with $2 \nmid x$ such that 
$5x^{p} - 2^{p-4} y^{p} = 1$. 
Fortunately, this equation is a particular case of a Diophantine equation which has been already solved using 
the multi-Frey approach by Bugeaud, Mignotte and Siksek (see \cite[Theorem~1.1]{BMS}). As a consequence, we find that the only solution 
to the above equation is $(x,y,p) = (1,2,3)$, which corresponds to the solution $(n,t,p)=(4,4,3)$ of equation~\eqref{eq:polygonal12}.

(iv) Assume that $2^{3} | n$. In this case, there exist coprime integers $x$ and $y$ such that 
$5\cdot 2^{p-4}x^{p} - y^{p} = 1$. 
Fortunately, this equation is a particular case of a Diophantine equation which has been already solved
by Bennett, Gy\H{o}ry, Mignotte and Pint\'{e}r (see \cite[Theorem~1.1]{BGMP}). We obtain that the only solution to the above equation is 
$(x,y,p) = (14,19,3)$, which corresponds to the solution $(n,t,p)=(5488,532,3)$ of equation~\eqref{eq:polygonal12}.

\section{The case of $s = k + 4$, with $k$ an odd prime}  \label{calculations for s odd} 

\subsection{A general setting} \label{subsect:q+4 q odd prime}

The equation~\eqref{eq:polygonal} with $s=k+4$ takes the form 
\begin{equation}
\label{eq:polygonal k+4} 
n((k+2)n - k) = 2t^{p}, \quad n,t \in \bbZ, p>2. 
\end{equation}
If $k$ is an odd prime, then $\gcd(n,(k+2)n-k) = 1$ or $k$. 

(i) Assume that $k \nmid n$. In this case there are coprime integers $x$ and $y$ such that 

\begin{equation}\label{eq:polygonal odd s Case Ia} 
(k+2)x^{p} - 2y^{p} = k
\end{equation} 
or 
\begin{equation}\label{eq:polygonal odd s Case Ib} 
2(k+2)x^{p} - y^{p} = k.
\end{equation} 

(ii) Assume that $k || n$. In this case there are coprime integers $x$ and $y$ such that 

\begin{equation}\label{eq:polygonal odd s Case IIa} 
(k+2)x^{p} - 2\cdot k^{p-2}y^{p} = 1
\end{equation} 
or 
\begin{equation}\label{eq:polygonal odd s Case IIb} 
2(k+2)x^{p} - k^{p-2}y^{p} = 1.
\end{equation}

Multiplying the equation~\eqref{eq:polygonal odd s Case IIa} by $k^{2}$ we are led to 
\begin{equation}\label{eq:polygonal odd s Case IIA} 
k^{2}(k+2)X^{p} - 2Y^{p} = k^{2}.
\end{equation}

Multiplying the equation~\eqref{eq:polygonal odd s Case IIb} by $k^{2}$ we are led to 
\begin{equation}\label{eq:polygonal odd s Case IIB} 
2k^{2}(k+2)X^{p} - Y^{p} = k^{2}.
\end{equation}

(iii) Assume that $k^{2} | n$. In this case there are coprime integers $x$ and $y$ such that 
\begin{equation}\label{eq:polygonal odd s Case IIIa} 
(k+2)\cdot k^{p-2}x^{p} - 2y^{p} = 1
\end{equation}
or 
\begin{equation}\label{eq:polygonal odd s Case IIIb} 
2(k+2)\cdot k^{p-2}x^{p} - y^{p} = 1.
\end{equation}

Multiplying the equation~\eqref{eq:polygonal odd s Case IIIa} by $k^{2}$ we are led to 
\begin{equation}\label{eq:polygonal odd s Case IIIA} 
(k+2)X^{p} - 2k^{2}Y^{p} = k^{2}.
\end{equation}

\subsection{Presentation of the calculations}
\label{subsect:calculations:q+4 q odd prime}

\begin{lem}
\label{lem: odd s Case Ia}
Let $3 \leq k \leq 97$ be a prime.

\noindent
{\rm (a)} The equation~\eqref{eq:polygonal odd s Case Ia} has a solution $(x,y) = (1,1)$ for any prime $p$.

\noindent
{\rm (b)} Let $3 \leq p \leq 7$. The equation~\eqref{eq:polygonal odd s Case Ia} has other non-zero solutions only
for the following two pairs $(k,p)$:
$(7,3)$ (solutions $(x,y) = (-3,-5), (-1,-2)$); $(31,5)$ (solution $(x,y) = (-1,-2)$).

\noindent
{\rm (c)} If we assume GRH, then the equation~\eqref{eq:polygonal odd s Case Ia} has no further solutions with $11\le p\le 29$.
\end{lem}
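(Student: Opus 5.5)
Part~(a) is immediate: substituting $(x,y)=(1,1)$ into \eqref{eq:polygonal odd s Case Ia} gives $(k+2)-2=k$ for every $p$. The rest is a computational verification, run in parallel with the proof of Lemma~\ref{lem: even s Case I}, with the same division into an unconditional low-degree part and a GRH-conditional mid-range part.

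For part~(b), I will treat each prime $3\le k\le 97$ and each $p\in\{3,5,7\}$ as a binomial Thue equation $(k+2)x^p-2y^p=k$. Since $\gcd(x,y)=1$ comes for free from the setting in Subsection~\ref{subsect:q+4 q odd prime}, I will solve it directly with the $\mathtt{MAGMA}$ Thue solver. That solver is unconditional (it relies on Baker bounds followed by LLL reduction), so there are only $25\times 3=75$ equations of small degree to handle, and this is routine. The only solutions found should be the listed ones for $(k,p)=(7,3)$ and $(31,5)$, together with $(1,1)$. As a sanity check I will confirm that these correspond to the polygonal solutions in Theorem~\ref{thm: s odd}(ii). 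For example, with $k=7$ and $(x,y)=(-1,-2)$ we get $9\cdot(-1)-2\cdot(-8)=7$.

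For part~(c), I will handle each prime $11\le p\le 29$ and each $k$ with \verb+thueinit()+ and \verb+thue()+ in $\mathtt{PARI/GP}$. The number field here is $\bbQ\bigl(((k+2)/2)^{1/p}\bigr)$, equivalently generated by a root of $(k+2)X^p-2$ after scaling. The right-hand side is $k$, not $\pm1$, so the computation needs the class group and units. These are certified only under GRH unless \verb+bnfcertify()+ succeeds, and the remark after Lemma~\ref{lem: even s Case I} explains that this is infeasible. Hence the result is conditional on GRH. Before calling the solver I will reduce the load by applying a Sophie Germain-type local check modulo primes $q=2pr+1$, as in step~(ii) of the proof of Lemma~\ref{lem: even s Case II}. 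However, $(x,y)=(1,1)$ is always a solution, so local obstructions cannot eliminate any pair. The cheap check is therefore of little help, and the full Thue solve is needed for every pair.

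The main obstacle is the cost of \verb+thueinit()+ at degree $p=29$: computing the unit group in degree $29$ with discriminant involving $(k+2)$ and $2$ is heavy. I would first try the default flag under GRH, parallelised over the $25$ values of $k$ on the supercomputer, as was done for $p=31$ in Lemma~\ref{lem: even s Case I}. If a case stalls, I would try the equivalent form $\bigl(2(k+2)^{p-1}\bigr)X^p - Y^p$-type rescalings, or a monic model, to obtain a nicer field defining polynomial. Stopping at $p=29$ rather than $31$ presumably reflects exactly where these resources ran out.
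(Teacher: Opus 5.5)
Your proposal matches the paper's proof, which simply follows Lemma~\ref{lem: even s Case I}: direct substitution for (a), the unconditional $\mathtt{MAGMA}$ Thue solver for $3\le p\le 7$, and the GRH-conditional \texttt{thueinit()} in $\mathtt{PARI/GP}$ (run on a supercomputer) for $11\le p\le 29$; your observation that a Sophie Germain-type local check cannot help, because $(1,1)$ is a global solution, is also correct. One trivial slip: there are $24$ primes with $3\le k\le 97$, not $25$, so $72$ equations rather than $75$.
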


\begin{proof}
The proof follows the steps of the proof of Lemma~\ref{lem: even s Case I} using a supercomputer.
\end{proof}

\begin{lem} \label{lem: odd s Case Ib} 
Let $3 \leq k \leq 97$ be a prime.
 
\noindent
{\rm (a)} If $(k,p)\not\in\{(5,13),(23,13),(29,11),(79,19), (83,19),(89,19)\}$, then the equation~\eqref{eq:polygonal odd s Case Ib} has non-zero solutions only for $(k,p)\in\{(23,3),(41,3)\}$, namely $(x,y)\in\{(1,3),(-2,-9)\}$.

\noindent
{\rm (b)} If we assume GRH, then equation~\eqref{eq:polygonal odd s Case Ib} has no non-zero solutions with $(k,p)\in\{(5,13),(23,13),(29,11),(79,19), (83,19),(89,19) \}$.
\end{lem}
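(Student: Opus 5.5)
We treat equation~\eqref{eq:polygonal odd s Case Ib}, $2(k+2)x^{p} - y^{p} = k$, in the same way as equation~\eqref{eq:polygonal even s Case II} in Lemma~\ref{lem: even s Case II}. First we find an upper bound on $p$ for every prime $3 \leq k \leq 97$. Then we eliminate most pairs $(k,p)$ by local arguments. Finally, the few surviving pairs are solved with Thue solvers.

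\emph{Step 1: bounding $p$.} We apply Lemma~\ref{lem:mignotte-binomial-bnd} with $a=2(k+2)$, $b=1$ and $c=k$. Then $A=a/b$ and $\lambda=\log 2$, so
\[
p \leq \max\{3\log(1.5k),\, 7400\log(2(k+2))/\log 2\},
\]
which is explicit (roughly $p<55{,}000$). The hypothesis $y>|x|>0$ is checked as follows. From the equation, $(y/x)^{p}=2(k+2)-k/x^{p}$, so $|y|>|x|$ once $|x|\geq 1$, and $x=0$ forces $y^{p}=-k$, which is impossible. The sign of $y$ can be normalised by $(x,y)\mapsto(-x,-y)$, replacing $c$ by $-c$; the bound only involves $|c|$. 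Alternatively, for large $p$ we could instead use the linear-forms-in-logarithms estimates of Section~\ref{sect:p-power-odd}, with $\alpha_{1}=2(k+2)$.

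\emph{Step 2: the Sophie Germain type sieve.} For every pair $(k,p)$ below the bound, we test whether $2(k+2)x^{p}-y^{p}\equiv k \pmod q$ is solvable for primes $q=2pr+1$ with $1\leq r\leq 150$. As in step~(ii) of Lemma~\ref{lem: even s Case II}, we only need to run over $p$-th powers modulo $q$ (zero together with the $2r$ residues $u$ with $u^{(q-1)/p}\equiv 1$). We expect this to remove all pairs except small $p$ and a handful of sporadic ones. For the sporadic survivors we next run \verb!HasPointsEverywhereLocally()! in $\mathtt{MAGMA}$, remembering that it tests the homogenised equation, so for some pairs a direct check of local solvability of the affine equation is needed.

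\emph{Step 3: solving the remaining equations.} For $p\leq 7$ (in particular $p=3$), we solve the equations with the Thue solver in $\mathtt{MAGMA}$. This recovers the solutions $(1,3)$ for $k=23$ and $(-2,-9)$ for $k=41$, and shows there are no others. For the remaining pairs with $p\geq 11$, we use \verb!thueinit()!/\verb!thue()! in $\mathtt{PARI/GP}$, on the supercomputer where necessary. Here the right-hand side $k$ is not $\pm1$, so the result is unconditional only if the class group computation is certified (via \verb!bnfcertify()!, or by using the flag giving unconditional output).

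\emph{Main obstacle.} I expect the main difficulty to lie in Step~3 for the six pairs $(5,13),(23,13),(29,11),(79,19),(83,19),(89,19)$, which survive the sieve and have degree $\geq 11$. For these, certification of the class group is infeasible, so we can only solve them under GRH. This is exactly the split between parts~(a) and~(b): part~(a) covers every other pair unconditionally, and part~(b) consists of the GRH-conditional \verb!thueinit()! computations for these six pairs. What to try first is to push the sieve further, with larger $r$ and more auxiliary primes $q$, and the $\mathbb{Q}_{p}$ local check, hoping to remove some of them unconditionally before falling back on GRH.
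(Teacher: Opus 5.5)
Your proposal is correct and follows essentially the paper's route. The paper's proof just says to repeat the proof of Lemma~\ref{lem: even s Case II}: Mignotte's bound (with $\lambda=\log 2$), the Sophie Germain sieve over primes $q=2pr+1$, then \verb!HasPointsEverywhereLocally()!, then the Thue solvers, with the six listed pairs being exactly those left to the GRH-conditional \verb!thueinit()! computation — though note the unconditional claim in part~(a) rests entirely on the sieve and local tests removing every other pair with $p\geq 11$, which is an unverified computational expectation in your proposal rather than something you establish, and a minor slip: for $k=97$ the bound $7400\log(198)/\log 2$ is about $56{,}500$, not $55{,}000$ (immaterial).
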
 
\begin{proof}
Following the proof of Lemma~\ref{lem: even s Case II}, we complete the proof. 
\end{proof}
\begin{lem} \label{lem: odd s Case IIa} 
Let $3 \leq k \leq 97$ be a prime.

\noindent
{\rm (a)} If $p=3$, then the equation~\eqref{eq:polygonal odd s Case IIa} has a solution only for 
$k=3$ or $k=83$, namely $(x,y) = (-1,-1)$ or $(x,y) = (5,4)$, respectively.

\noindent
{\rm (b)} The equation~\eqref{eq:polygonal odd s Case IIa} has no solution for $p \geq 5$.
\end{lem}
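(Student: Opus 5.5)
We follow the four-step scheme of the proof of Lemma~\ref{lem: even s Case II}. For the first three steps we work with the equivalent equation~\eqref{eq:polygonal odd s Case IIA}, namely $k^{2}(k+2)X^{p}-2Y^{p}=k^{2}$. For the last step we return to \eqref{eq:polygonal odd s Case IIa}, whose right-hand side is $1$.

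\emph{Step (i): bound on $p$.} We apply Lemma~\ref{lem:mignotte-binomial-bnd} with $a=k^{2}(k+2)$, $b=2$, $c=k^{2}$. Then $A=k^{2}(k+2)$ and $\log(a/b)=\log A-\log 2$, so
\[
\lambda=\log\Bigl(1+\tfrac{\log A}{\log A-\log 2}\Bigr)\geq \log 2 .
\]
Since $3\log(1.5k^{2}/2)$ is tiny, this gives, for each prime $3\leq k\leq 97$, an explicit bound of the form $p<10{,}700\log\bigl(k^{2}(k+2)\bigr)$ or so. Some care is needed because Mignotte's lemma assumes $y>|x|>0$. We treat $X=0$ separately, which forces $2Y^{p}=-k^{2}$ and is impossible. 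We also swap the roles of the variables, or pass to $-X,-Y$, when $|X|>|Y|$.

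\emph{Step (ii): Sophie Germain sieve.} For every pair $(k,p)$ with $p$ below this bound, we test local solvability of \eqref{eq:polygonal odd s Case IIA} modulo primes $q=2pr+1$ with $1\leq r\leq 150$. The $p$-th powers modulo $q$ form a set of at most $2r+1$ residues, namely $0$ and the $u$ with $u^{(q-1)/p}\equiv 1 \pmod q$. So each test needs only $O(r^{2})$ checks. We expect this to discard all but a small number of pairs, mostly with $p\leq 7$.

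\emph{Step (iii): local check.} We then apply \verb!HasPointsEverywhereLocally()! in $\mathtt{MAGMA}$ to the survivors. We keep in mind that this checks only the homogenised curve, so a pair may survive this step without having genuine local solutions.

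\emph{Step (iv): Thue solving.} The remaining pairs are solved directly with \verb!thueinit()!/\verb!thue()! in $\mathtt{PARI/GP}$, using the form \eqref{eq:polygonal odd s Case IIa}. Because its right-hand side is $1$, the output is unconditional and does not depend on GRH. For $p=3$ this recovers exactly the solutions $(x,y)=(-1,-1)$ for $k=3$ and $(5,4)$ for $k=83$, and shows there are none for the other $k$. This gives part~(a). For $p\geq 5$ it should return no solutions, giving part~(b).

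\emph{Main obstacle.} We expect the hardest step to be any pair $(k,p)$ with moderately large $p$ (say $p\geq 11$) that survives both sieves. For such pairs the Thue computation in degree $p$ is very expensive. If one appears, we would first enlarge the range of $r$ in step~(ii) and add a modulo $p^{2}$ check. Failing that, we would fall back on a supercomputer run of \verb!thueinit()!, as was done for $(67,19)$ in Lemma~\ref{lem: even s Case II}.
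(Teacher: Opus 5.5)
Your proposal is correct and is essentially the paper's argument: the four-step scheme of Lemma~\ref{lem: even s Case II} (Mignotte's bound on the $k^{2}$-multiplied form \eqref{eq:polygonal odd s Case IIA}, then the Sophie Germain sieve, then the local-solvability check, and finally \texttt{thueinit()} on the right-hand-side-$1$ form \eqref{eq:polygonal odd s Case IIa}), with the hardest surviving pairs, namely $(k,p)=(71,13)$, $(73,17)$ and $(59,19)$ in the paper, finished by supercomputer runs, just as you anticipate. The paper's one-line proof cites Lemma~\ref{lem: even s Case I}, but the explicit bound on $p$ needed for part~(b) is exactly what your Case~II-style Step~(i) supplies.
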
 
\begin{proof}
Using the proof steps of Lemma~\ref{lem: even s Case I}, we resolve (a) and (b).
Here, the supercomputer runs for $(k,p)\in \{ (71,13),(73,17) \}$ took 1h35m31s and 1h06m08s, respectively.
For $(k,p)=(59,19)$ we used $\mathtt{PARI/GP}$ version 2.16, git branch \textit{bill-parbnf-gokhan}: \url{http://pari.math.u-bordeaux.fr/git.html}.
It took 26.5 hours (95408424 ms) with 224 cores and 300 GB of memory.
\end{proof}
\begin{lem} \label{lem: odd s Case IIb} 
Let $3 \leq k \leq 97$ be a prime. The equation~\eqref{eq:polygonal odd s Case IIb} has a non-zero solution only for $(k,p)=(3,3)$, namely $(x,y)=(-2,-3)$.
\end{lem}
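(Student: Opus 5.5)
I will follow the four-step template of the proof of Lemma~\ref{lem: even s Case II}. For the first three steps I work with the equivalent form \eqref{eq:polygonal odd s Case IIB}, $2k^{2}(k+2)X^{p}-Y^{p}=k^{2}$. For the final Thue-solving step I return to \eqref{eq:polygonal odd s Case IIb}, whose right-hand side is $1$, so that the results from \verb+thueinit()+ are unconditional.

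\textbf{Step (i): bound $p$.} For each prime $3\le k\le 97$, apply Lemma~\ref{lem:mignotte-binomial-bnd} with $a=2k^{2}(k+2)$, $b=1$, $c=k^{2}$.
\begin{itemize}
\item Since $A=a/b$, we get $\lambda=\log 2$.
\item The first term $3\log(1.5k^{2})$ is negligible.
\item This gives $p<7400\log\bigl(2k^{2}(k+2)\bigr)/\log 2$, roughly $1.5\cdot 10^{5}$ at most.
\end{itemize}
Before applying the lemma I must check its hypothesis $Y>|X|>0$. For $p\ge 3$, a solution with $X=0$ would force $-Y^{p}=k^{2}$, which is impossible. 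A solution with $|Y|\le |X|$ is impossible for size reasons, except for tiny values, which I handle by direct search.

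\textbf{Step (ii): local sieve.} For every pair $(k,p)$ below this bound, run the Sophie Germain type check. Take primes $q=2pr+1$ with $1\le r\le 150$. Restrict $X^{p}$ and $Y^{p}$ modulo $q$ to $0$ and the at most $2r$ elements $u$ with $u^{(q-1)/p}\equiv 1\pmod q$. Test whether $2k^{2}(k+2)u-v\equiv k^{2}\pmod q$ has a solution; if it fails for some $q$, the pair is eliminated. I expect this to remove all pairs except a few with small $p$, mostly $p\le 7$, and possibly a handful of moderate ones.

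\textbf{Step (iii): local solubility.} For the survivors, call \verb!HasPointsEverywhereLocally()! in $\mathtt{MAGMA}$. Because that function tests the homogenised equation, a pair it fails to eliminate may still be insoluble locally; I will recheck such pairs modulo small primes directly.

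\textbf{Step (iv): solve the remaining Thue equations.} Solve \eqref{eq:polygonal odd s Case IIb} for each remaining pair with the $\mathtt{PARI/GP}$ Thue solver, using flag $0$. For $p=3$ (and $p\le 7$ generally) the $\mathtt{MAGMA}$ solver also suffices. This should produce only the solution $(x,y)=(-2,-3)$ for $(k,p)=(3,3)$: indeed $2\cdot5\cdot(-8)-3\cdot(-27)=-80+81=1$.

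The main obstacle is step (iv) if any pair with $p\ge 11$ survives steps (ii)--(iii). The Thue solver's cost grows rapidly with the degree, as with $(67,19)$ in Lemma~\ref{lem: even s Case II}. If that happens, I would first enlarge the range of $r$ in the sieve, and then add a $p$-adic test at $q=p$ or $q=k$, before resorting to supercomputer runs.
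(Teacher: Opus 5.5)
Your proposal is correct and follows essentially the same route as the paper, whose proof simply runs the four steps of Lemma~\ref{lem: even s Case II}: Mignotte's bound for \eqref{eq:polygonal odd s Case IIB}, the Sophie Germain sieve with $q=2pr+1$, $\mathtt{MAGMA}$'s local-solubility check, and then unconditional $\mathtt{PARI/GP}$ Thue solving of the right-hand-side-$1$ form \eqref{eq:polygonal odd s Case IIb}. Your value $\lambda=\log 2$ is the correct one (the paper's ``$\lambda=2$'' in Lemma~\ref{lem: even s Case II} is a slip, as its own bound $10{,}700\log(\cdot)\approx(7400/\log 2)\log(\cdot)$ shows), and, as you anticipated, the only hard survivors were the degree-$19$ pairs $(k,p)=(97,19)$ and $(71,19)$, which the authors settled by long supercomputer runs rather than by further sieving.
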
 

\begin{proof}
The proof follows the steps in Lemma~\ref{lem: even s Case II}. The case $(k,p)=(97,19)$
was computed on a supercomputer taking 6 days, 11 hours, 36 minutes, and 23 seconds,
while $(k,p)=(71,19)$ was handled using $\mathtt{PARI/GP}$ version 2.16, git branch
\textit{bill-parbnf-gokhan}: \url{http://pari.math.u-bordeaux.fr/git.html} on the
supercomputer. It took 48.9 hours (176056466 ms) with 224 cores and 300 GB memory.
\end{proof}

\begin{lem} \label{lem: odd s Case IIIa} 
Let $3 \leq k \leq 97$ be a prime.

\noindent
{\rm (a)} If $p=3$, then the equation~\eqref{eq:polygonal odd s Case IIIa} has a solution only
for $k = 3$ $($one solution $(x,y) = (-1,-2))$, for $k = 23$ $($one solution $(x,y) = (5,33))$, for $k = 31$ $($one solution $(x,y) = (-1,-8))$. 

\noindent
{\rm (b)} The equation~\eqref{eq:polygonal odd s Case IIIa} has no solution for $p\geq 5$.
\end{lem}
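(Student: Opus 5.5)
The plan is to follow the four-step scheme of the proof of Lemma~\ref{lem: even s Case II}, working first with the equivalent form \eqref{eq:polygonal odd s Case IIIA}, namely $(k+2)X^{p}-2k^{2}Y^{p}=k^{2}$. For part~(b) we first need an explicit upper bound on $p$. We apply Lemma~\ref{lem:mignotte-binomial-bnd} with $a=k+2$, $b=2k^{2}$ and $c=k^{2}$. Then $a<b$, so $A=2k^{2}$ and $|\log(a/b)|=\log(2k^{2}/(k+2))$. Since $\log(2k^2)/\log(2k^2/(k+2))$ is bounded (at most about $2.3$ for $k\ge 3$), $\lambda$ is bounded below by roughly $\log 2$. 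This gives $p\le 7400\log(2k^{2})/\lambda$, of order $10^{5}$ at worst. The other branch, $3\log(1.5|c/b|)=3\log(0.75)$, is negative and irrelevant. One should check the condition $y>|x|$ in Lemma~\ref{lem:mignotte-binomial-bnd}. If it fails, swap the roles of the variables, or argue directly from the size of the coefficients that $|X|$ and $|Y|$ are comparable. The case $X=\pm Y$ forces $(k+2)\mp 2k^2$ to divide $k^2$ up to a $p$-th power, which is checked by hand.

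For each prime $3\le k\le 97$ and each prime $5\le p$ below this bound, I would run the Sophie Germain type sieve. That is, for primes $q=2pr+1$ with $1\le r\le 150$, test whether \eqref{eq:polygonal odd s Case IIIA} (or \eqref{eq:polygonal odd s Case IIIa}) is solvable modulo $q$. Only the $p$-th power residues $u$ (those with $u^{(q-1)/p}\equiv 1$) together with $0$ need to be checked for $X^p$ and $Y^p$. This costs at most $(2r+1)^2$ tests per $q$. I expect this to eliminate all but a few dozen pairs $(k,p)$, mostly with $p\le 7$ and perhaps a handful with $p\in\{11,13,17,19\}$. Next I would apply \verb!HasPointsEverywhereLocally()! in $\mathtt{MAGMA}$ to kill further pairs through local obstructions, e.g. at $q=p$ or $q=k$. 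For this I would remember that it tests the homogenisation, so a survivor there is not automatically locally solvable.

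For the surviving pairs, and for all $k$ when $p=3$ (part~(a)), I would solve the original equation \eqref{eq:polygonal odd s Case IIIa} directly with \verb!thueinit()!/\verb!thue()! in $\mathtt{PARI/GP}$. Its right-hand side is $1$, so the result is unconditional. For small $p$ I would alternatively use $\mathtt{MAGMA}$'s Thue solver. For $p=3$ this should produce exactly the solutions $(-1,-2)$ for $k=3$, $(5,33)$ for $k=23$ and $(-1,-8)$ for $k=31$. I would sanity-check these, for example $5\cdot 3\cdot(-1)-2(-8)=1$ for $k=31$ with $k+2=33$, since $33\cdot 31\cdot(-1)+2\cdot 512=-1023+1024=1$.

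The main obstacle I anticipate is the last step for the few survivors with $p\ge 11$. There the degree-$p$ number fields have large discriminants (involving $k^{p-2}$), and \verb!thueinit()! becomes very expensive, as in the supercomputer runs for Lemmas~\ref{lem: odd s Case IIa} and \ref{lem: odd s Case IIb}. If this happens, I would first try to enlarge the sieve, taking $r$ up to several hundred or adding the modulo $p^2$ test, so that no pair with $p\ge 11$ survives. Only as a last resort would I use the development version of $\mathtt{PARI/GP}$ on a supercomputer.
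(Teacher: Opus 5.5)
Your plan is correct and is essentially the paper's proof: the $\mathtt{PARI/GP}$ Thue solver for $p=3$; a Sophie Germain type sieve with primes $q=2pr+1$ (done in $\mathtt{MAGMA}$ in the paper, leaving 35 pairs); then \texttt{thueinit()} on the right-hand-side-$1$ form \eqref{eq:polygonal odd s Case IIIa}, with your Mignotte bound supplying the cutoff on $p$ that the paper leaves implicit, as in Lemma~\ref{lem: even s Case II}. Two small tidy-ups: since $2k^2-k-2>k^2$, every solution of \eqref{eq:polygonal odd s Case IIIA} has $|X|>|Y|>0$, so you must apply Lemma~\ref{lem:mignotte-binomial-bnd} to the swapped form $2k^2Y^p-(k+2)X^p=-k^2$ (its first branch is then $3\log(1.5k^2/(k+2))$, still negligible), and your check ``$5\cdot 3\cdot(-1)-2(-8)=1$'' verifies the $k=3$ solution, not the $k=31$ one.
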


\begin{proof}
For (a), we use the Thue solver function in $\mathtt{PARI/GP}$. Next, using the
Sophie Germain type argument with $\mathtt{MAGMA}$ we continue (b). This left 35
pairs. Finally, treating them with Thue solver in $\mathtt{PARI/GP}$, we complete
the proof.
\end{proof}

\begin{lem} \label{lem: odd s Case IIIb} 
Let $3 \leq k \leq 97$ be a prime.

\noindent
{\rm (a)} The equation~\eqref{eq:polygonal odd s Case IIIb} has a solution $(x,y) = (0,-1)$ for any prime $p$.

\noindent
{\rm (b)} The equation~\eqref{eq:polygonal odd s Case IIIb} has no non-zero solutions for all odd primes $p$, 
with possible exceptions $(k,p) = (k,k)$ (with $29 \leq p \leq 97$) or $(k,p) \in
\{(59,29), (83,41) \}$.

\noindent
{\rm (c)} The weak effective $abc$ conjecture ($abc(r)$) implies that the
equation~\eqref{eq:polygonal odd s Case IIIb} has no solutions for $r= 1.63$
when $(k,p) = (k,k)$ (with $29 \leq p \leq 97$) or
$(k,p) \in \{(59,29), (83,41) \}$.
\end{lem}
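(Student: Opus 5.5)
Part~(a) is a direct check: with $x=0$ the left-hand side of \eqref{eq:polygonal odd s Case IIIb} is $-y^{p}$, and $y=-1$ gives $1$ for odd $p$. For parts~(b) and~(c) we would follow the proof of Lemma~\ref{lem: even s Case III} step by step. Write $A=2(k+2)k^{p-2}$, so that \eqref{eq:polygonal odd s Case IIIb} is the binomial equation $Ax^{p}-y^{p}=1$ with $x\neq 0$.

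\textbf{Part~(b).} Since this is an equation $Ax^{p}-y^{p}=1$ with right-hand side $1$, the first step is to apply Theorem~1.3 of Bartolom\'{e} and Mih\u{a}ilescu \cite{BM}. For each prime $3\le k\le 97$, this result should rule out all but finitely many primes $p$. We expect the exceptional set to consist of small $p$, the diagonal cases $p=k$ (where the coefficient has extra $k$-power structure, so the criterion of \cite{BM} fails), and a few sporadic pairs. For the surviving pairs we would proceed in increasing order of difficulty:
\begin{enumerate}
\item[(1)] For $p\le 7$, use the Thue solver in $\mathtt{MAGMA}$.
\item[(2)] For larger $p$, first try the Sophie Germain type local argument from Lemma~\ref{lem: even s Case II}, i.e.\ test solvability modulo primes $q=2pr+1$. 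Then apply \verb!HasPointsEverywhereLocally()!.
\item[(3)] For whatever remains, run \verb+thueinit()+ in $\mathtt{PARI/GP}$. Because the right-hand side is $1$, this computation is unconditional.
\end{enumerate}
Any pair that survives all of these and cannot be completed by computation goes into the exceptional list $(k,k)$ with $29\le p\le 97$, and $\{(59,29),(83,41)\}$.

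\textbf{Main obstacle.} The hardest step is finishing the large-degree Thue equations. For $p\ge 29$ the number fields $\bbQ\bigl((2(k+2)k^{p-2})^{1/p}\bigr)$ have degree at least $29$, and the class group and unit computations become infeasible even on a supercomputer. This is exactly why these pairs are left as exceptions in the statement rather than being resolved.

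\textbf{Part~(c).} We argue as in Lemma~\ref{lem:effective abc} and Lemma~\ref{lem: even s Case III}(d). Put $a=2(k+2)k^{p-2}x^{p}$, $b=-1$ and $c=y^{p}$ (up to sign and ordering, so that all terms are positive). Then
\[
L(a,b,c)\ge \frac{p\log|y|}{\log\bigl(2k(k+2)|xy|\bigr)} .
\]
A nonzero solution has $|x|\ge 1$, and then $|y|^{p}\ge 2(k+2)k^{p-2}-1$, so $|y|$ is roughly $k$ or larger. Hence $p\log|y|$ is at least about $(p-2)\log k$, while the denominator is $O(\log k+\log|x|+\log|y|)$. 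Comparing the two sides for $p\ge 29$ and $k\le 97$ should give $L>1.63$. This contradicts $abc(1.63)$, so there are no solutions in the exceptional cases.
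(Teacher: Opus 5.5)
Your proposal is correct and follows the paper's proof. The paper also uses Theorem~1.3 of Bartolom\'{e} and Mih\u{a}ilescu \cite{BM} to reduce to finitely many pairs $(k,p)$, then the $\mathtt{MAGMA}$ Thue solver for $p\le 7$ and \texttt{thueinit()} in $\mathtt{PARI/GP}$ for the rest, with the infeasible pairs left as exceptions, and it handles part~(c) by the same $abc$ estimate with $a=2(k+2)k^{p-2}x^{p}$, $b=-1$, $c=y^{p}$. Your denominator $\log(2k(k+2)|xy|)$ correctly keeps the factor $k$ that the paper's radical drops.

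Your step~(2) should be dropped. Because $(x,y)=(0,-1)$ is a global solution, every congruence test modulo $q=2pr+1$ and every call to \texttt{HasPointsEverywhereLocally()} is automatically passed, so that step eliminates nothing and all the reduction has to come from \cite{BM}.
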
 

\begin{proof}
The proof proceeds as in Lemma~\ref{lem: even s Case III}. Cases $(k,p)\in\{(19,19),\\(23,23),(47,23)\}$
were computed on a supercomputer with 220 cores and 300 of GB memory for one day
using $\mathtt{PARI/GP}$. However, for $(k,p)\in\{(59,29),(83,41)\}$, and for
$p\ge 29$, the available resources (up to 128 cores, 1 TB of memory, and 15 days)
were insufficient.

For the proof of part~(c), we apply a similar argument as in the proof of Lemma~\ref{lem:effective abc}.
Here $a=2(k+2)k^{p-2}x^{p}$, $b=-1$ and $c=y^{p}$. So $L(a,b,c) = \log(c)/ \log \rad(abc)
=p\log(y)/\log \rad \left( 2(k+2)xy \right)>p\log(y)/\log \left( 2(k+2)xy \right)$.
\end{proof} 

\subsection{Generalisations}
\label{subsect:calculations:s=13}

The general setting presented in Subsection~\ref{subsect:q+4 q odd prime} can be
easily generalised to the case $s = q^{r} + 4$, with $q$ an odd prime. Let us
consider in some detail a special case $s = 13$. In this case, the equation~\eqref{eq:polygonal}
takes the form 
\begin{equation}
\label{eq:polygonal7} 
n(11n-9) = 2 t^{p}, \quad n,t \in \bbZ, p>2. 
\end{equation}

Using the Sophie Germain type argument, applying \verb!HasPointsEverywhereLocally()!
(if necessary) and the Thue equation solver in $\mathtt{MAGMA}$ \cite{Magma}, we
completely solve the Diophantine equations from (iii) below, the second equation
from item~(i) below and the first equation from item~(iv) below. For the second
Diophantine equation in (iv) below, we apply the Bartolom\'{e}-Mih\u{a}ilescu theorem
and the Thue equation solver in $\mathtt{MAGMA}$.

(i) Assume that $3 \nmid n$. In this case there are coprime integers $x$ and $y$
such that $11x^{p} - 2y^{p} = 9$ or $22x^{p} - y^{p} = 9$. For $3 \leq p < 11$,
$\mathtt{MAGMA}$ confirms that $(x,y)=(1,1)$ is the only solution. For $11 \leq p \leq 37$,
computations are carried out with $\mathtt{PARI/GP}$ on a laptop, while the cases
$p\in\{41,43\}$ require a supercomputer (these computations are under GRH). For
$p>43$, the available memory is insufficient, and the computations were terminated.

For a prime $p > 43$, the weak effective $abc$ conjecture ($abc(r)$) with $r=1.63$ implies
that $(x, y) = (1, 1)$ is the only solution of the equations $11x^p - 2y^p = 9$
(use Lemma~\ref{lem:effective abc 2}) and $22x^p - y^p = 9$ (use a similar argument).

(ii) Assume that $3 || n$. In this case there are coprime integers $x$ and $y$ such that 
$11x^{p} - 2\cdot 3^{p-2}y^{p} = 3$ or $22x^{p} - 3^{p-2}y^{p} = 3$.
It is plain to see that both equations have no non-trivial solutions for $p\geq 3$. 

(iii) Assume that $3^{2} || n$. In this case there are coprime integers $x$ and $y$ such that 
$11x^{p} - 2\cdot 3^{p-4}y^{p} = 1$ or $22x^{p} - 3^{p-4}y^{p} = 1$. These equations have no solutions.

(iv) Assume that $3^{3} | n$. In this case there are coprime integers $x$ and $y$ such that
$11\cdot 3^{p-4}x^{p} - 2y^{p} = 1$ or $22\cdot 3^{p-4}x^{p} - y^{p} = 1$. The first equation has no solutions, 
and the second one has the only solution $(x,y) = (0,-1)$.

\section{The cases $s = 16, 19$}
\label{sect:cases s = 16, 19} 

Neither of the values $s = 16, 19$ is of the form $2q^{r}+4$ or $q^{r}+4$ considered above.
Below we consider the equation~\eqref{eq:polygonal} for these two values of $s$ in some
detail, give a summary of the results for the equation~\eqref{eq:polygonal} for all
$5 \leq s \leq 20$, and propose some questions.

\subsection{The case $s = 16$} \label{subsect:calculations:s=16}

In this case, the equation~\eqref{eq:polygonal} takes the form 
\begin{equation} \label{eq:polygonal16} 
n(7n-6) = t^{p}, \quad n,t \in \bbZ, p>2.
\end{equation}

Using the methods of Subsection~\ref{subsect:calculations:s=13} (Sophie Germain type argument
and the Thue solvers in $\mathtt{MAGMA}$ and $\mathtt{PARI/GP}$ with \verb+thueinit+, flag=0)
for (ii)--(viii), and Theorem~1.3 of Bartolom\'{e} and Mih\u{a}ilescu for (ix), we completely solve the corresponding Diophantine equations.

(i) Assume that $6 \nmid n$. In this case, there exist coprime integers $x$ and $y$ such that $7x^{p} - y^{p} = 6$. 
For $3 \leq p \leq 11$, $\mathtt{MAGMA}$ verifies that $(x,y)=(1,1)$ is the unique solution to this equation, while for $13 \leq p \leq 41$ we employ $\mathtt{PARI/GP}$ ($\mathtt{PARI/GP}$ computations are
under GRH). The case $p\geq 43$ is discussed in Section~\ref{sect:p-power-even}.

(ii) Assume that $2 \nmid n$ and $3||n$. In this case, there exist coprime integers $x$ and $y$ such that 
$7x^{p} - 3^{p-2}y^{p} = 2$. This equation has no solution. The only case  $p=11$ is resolved by $\mathtt{PARI/GP}$ (this computation is under GRH).

(iii) Assume that $2 \nmid n$ and $3^{2}|n$. In this case, there exist coprime integers $x$ and $y$ such that 
$7 \cdot 3^{p-2}x^{p} - y^{p} = 2$. This equation has no solution. $\mathtt{PARI/GP}$ resolves the case $p=19$ (this computation is under GRH).

(iv) Assume that $2 || n$ and $3 \nmid n$. In this case, there exist coprime integers $x$ and $y$ such that 
$7x^{p} - 2^{p-2}y^{p} = 3$. This equation has no solution.

(v) Assume that $2|| n$ and $3||n$. In this case, there exist coprime integers $x$ and $y$ such that 
$7x^{p} - 6^{p-2}y^{p} = 1$. This equation has the only solution $(x,y,p) = (1,1,3)$, which corresponds to the solution 
$(n,t,p)=(6,6,3)$ of equation~\eqref{eq:polygonal16}. Note that the above equation is a particular case of 
a Diophantine equation which has been already solved using the multi-Frey approach by Bugeaud, Mignotte and Siksek 
(see \cite[Theorem~1.3]{BMS}).

(vi) Assume that $2|| n$ and $3^{2}|n$. In this case, there exist coprime integers $x$ and $y$ such that 
$7 \cdot 3^{p-2}x^{p} - 2^{p-2}y^{p} = 1$. This equation has no solution.

(vii) Assume that $2^{2}| n$ and $3\nmid n$. In this case, there exist coprime integers $x$ and $y$ such that 
$7 \cdot 2^{p-2}x^{p} - y^{p} = 3$. This equation has no solution. We only use $\mathtt{PARI/GP}$ for the case $p=11$ (this computation is under GRH).

(viii) Assume that $2^{2}| n$ and $3||n$. In this case, there exist coprime integers $x$ and $y$ such that 
$7 \cdot 2^{p-2}x^{p} - 3^{p-2}y^{p} = 1$. This equation has no solution. 

(ix) Assume that $2^{2}| n$ and $3^{2}|n$. In this case, there exist coprime integers $x$ and $y$ such that 
$7 \cdot 6^{p-2}x^{p} - y^{p} = 1$. This equation has the only solution $(x,y) = (0,-1)$.

\subsection{The case $s = 19$} \label{subsect:calculations:s=19}

In this case, the equation~\eqref{eq:polygonal} takes the form 
\begin{equation}
\label{eq:polygonal19} 
n(17n-15) = 2 t^{p}, \quad n,t \in \bbZ, p>2. 
\end{equation}

Applying the methods of Subsection~\ref{subsect:calculations:s=13} for (ii)-(viii)
and the first equation of (ix), and Theorem~1.3 of Bartolom\'{e} and Mih\u{a}ilescu for the second equation of (ix), we completely solve the following Diophantine equations.

(i) Assume that $15 \nmid n$. In this case there are coprime integers $x$ and $y$ such that 
$17x^{p} - 2y^{p} = 15$ or $34x^{p} - y^{p} = 15$.
For $3 \leq p < 11$ we use $\mathtt{MAGMA}$, and for $11 \leq p \leq 31$ we use $\mathtt{PARI/GP}$. The case $p=37$ was solved on a supercomputer, whereas for $p \geq 41$ the available memory proved insufficient ($\mathtt{PARI/GP}$ computations are under GRH). The case $p \geq 41$ is discussed in Section~\ref{sect:p-power-odd}. The second equation has no solutions. 

For a prime $p > 37$, the weak effective $abc$ conjecture ($abc(r)$) with $r=1.63$
implies that $(x, y) = (1, 1)$ is the only solution of the equations $17x^p - 2y^p = 15$
(use Lemma~\ref{lem:effective abc 2}) and $34x^p - y^p = 15$ (use a similar argument).

(ii) Assume that $3 \nmid n$ and $5 || n$. In this case there are coprime integers
$x$ and $y$ such that $17x^{p} - 2\cdot 5^{p-2}y^{p} = 3$ $(p\neq 37,41)$ or
$34x^{p} - 5^{p-2}y^{p} = 3$. These equations have no solutions. Here, the cases
$p=37,41$ were considered with 220 cores, 300 GB of memory for 3 days. But these cases
were unsuccessful due to insufficient memory.

The weak effective $abc$ conjecture ($abc(r)$) with $r=1.63$ implies that the first
equation has no solution for $p = 37$ or $41$.

(iii) Assume that $3 \nmid n$ and $5^{2} | n$. In this case there are coprime
integers $x$ and $y$ such that $17 \cdot 5^{p-2}x^{p} - 2y^{p} = 3$ or
$34 \cdot 5^{p-2}x^{p} - y^{p} = 3$. These equations have no solutions. 

(iv) Assume that $3 || n$ and $5 \nmid n$. In this case there are coprime integers
$x$ and $y$ such that $17x^{p} - 2\cdot 3^{p-2}y^{p} = 5$ or $34x^{p} - 3^{p-3}y^{p} = 5$.
These equations have no solutions. 

(v) Assume that $3 || n$ and $5 || n$. In this case there are coprime integers $x$ and $y$ such that
$17x^{p} - 2\cdot 15^{p-2}y^{p} = 1$ or $34x^{p} - 15^{p-2}y^{p} = 1$. These equations have no solutions. 

(vi) Assume that $3 || n$ and $5^{2} | n$. In this case there are coprime integers $x$ and $y$ such that
$17\cdot 5^{p-2}x^{p} - 2\cdot 3^{p-2}y^{p} = 1$ or $34\cdot 5^{p-2}x^{p} - 3^{p-2}y^{p} = 1$. 
These equations have no solutions.

(vii) Assume that $3^{2} | n$ and $5 \nmid n$. In this case there are coprime integers $x$ and $y$ such that
$17\cdot 3^{p-2}x^{p} - 2y^{p} = 5$ or $34\cdot 3^{p-2}x^{p} - y^{p} = 5$. These equations have no solutions. We only use $\mathtt{PARI/GP}$ for the case $p\in\{11,19\}$ of the first equation (these computations are under GRH).

(viii) Assume that $3^{2} | n$ and $5 || n$. In this case there are coprime integers $x$ and $y$ such that 
$17\cdot 3^{p-2}x^{p} - 2\cdot 5^{p-2}y^{p} = 1$ or $34\cdot 3^{p-2}x^{p} - 5^{p-2}y^{p} = 1$. 
These equations have no solutions.

(ix) Assume that $3^{2} | n$ and $5^{2} | n$. In this case there are coprime integers $x$ and $y$ such that 
$17\cdot 15^{p-2}x^{p} - 2y^{p} = 1$ or $34\cdot 15^{p-2}x^{p} - y^{p} = 1$. The first equation has no solutions, 
and the second one has the only solution $(x,y) = (0,-1)$.

For $\mathtt{MAGMA}$ computations, we used a MacBook Pro computer with the
following characteristics: Processor M2 Pro, i12, 3.2 GHz, 16~GB RAM, 1 TB SSD.
All these computations were done without using GRH with MAGMA V2.28-14.

\appendix 

\section{Linear forms in logs revisited} 
\label{sect:A-1} 

We can improve the upper bounds on $p$ obtained in Sections~\ref{sect:p-power-even} and \ref{sect:p-power-odd}
further by using Laurent's Theorem~1 instead of his Theorem~2. 


\begin{lem}[Laurent]
\label{lem:Laurent-Thm1}
Let $K$, $L$, $R_{1}$, $R_{2}$, $S_{1}$ and $S_{2}$ be positive integers with
$K \geq 2$. Let $\varrho$ and $\mu$ be real numbers with $\varrho>1$ and
$1/3 \leq \mu \leq 1$. Put
\begin{align*}
R=R_{1}+R_{2}-1, \hspace*{1.0mm} S=S_{1}+S_{2}-1, \hspace*{1.0mm} N=KL, \hspace*{1.0mm}
g = \frac{1}{4}-\frac{N}{12RS}, \\
\sigma = \frac{1+2\mu-\mu^{2}}{2}, \hspace*{1.0mm}
b= \frac{(R-1)b_{2}+(S-1)b_{1}}{2} \left( \prod_{k=1}^{K-1} k! \right)^{-2/(K^{2}-K)}.
\end{align*}

Let $a_{1}$ and $a_{2}$ be positive real numbers such that
\[
a_{i} \geq \varrho \left| \log \alpha_{i} \right| - \log \left| \alpha_{i} \right| + 2D h \left( \alpha_{i} \right)
\]
for $i=1,2$. Suppose that
\begin{align}
\label{eq:L-1}
\card \left\{ \alpha_{1}^{r}\alpha_{2}^{s}: 0 \leq r< R_{1}, 0 \leq s<S_{1} \right\} & \geq L, \\
\card \left\{ rb_{2}+sb_{1}: 0 \leq r< R_{2}, 0 \leq s<S_{2} \right\} & > (K-1)L \nonumber
\end{align}
and
\begin{align}
\label{eq:L-2}
& K(\sigma L - 1) \log \varrho - (D+1)\log N - D(K-1)\log b \\
& -gL \left( Ra_{1}+Sa_{2} \right) > \varepsilon(N), \nonumber
\end{align}
where
\[
\varepsilon(N) = 2 \log \left( N! N^{-N+1} \left( e^{N}+(e-1)^{N} \right) \right) / N.
\]

Then
\[
\left| \Lambda' \right| > \varrho^{-\mu KL}
\hspace*{1.0mm} \text{ with } \hspace*{1.0mm} \Lambda' = \Lambda \max \left\{ \frac{LSe^{LS|\Lambda|/(2b_{2})}}{2b_{2}}, \frac{LRe^{LR|\Lambda|/(2b_{1})}}{2b_{1}} \right\}.
\]
\end{lem}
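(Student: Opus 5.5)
This lemma is Theorem~1 of Laurent \cite{Laurent}, so the plan is to quote it directly rather than reprove it. The real work is to check that every quantity in the statement agrees with Laurent's normalisation. Concretely, I will go through Laurent's hypotheses one at a time and compare:
\begin{itemize}
\item $\alpha_1,\alpha_2$ real with $|\alpha_i|\geq 1$, $b_1,b_2$ positive integers, and $\Lambda=b_2\log\alpha_2-b_1\log\alpha_1$;
\item the degree $D=[\bbQ(\alpha_1,\alpha_2):\bbQ]/[\bbR(\alpha_1,\alpha_2):\bbR]$;
\item the lower bound $a_i\geq \varrho|\log\alpha_i|-\log|\alpha_i|+2Dh(\alpha_i)$;
\item the auxiliary quantities $R,S,N,g,\sigma$ and $b$;
\item the two cardinality conditions \eqref{eq:L-1}, the main inequality \eqref{eq:L-2} with its error term $\varepsilon(N)$, and the definition of $\Lambda'$ in the conclusion.
\end{itemize}
Since we only ever use it with $D=1$ and rational $\alpha_i$, no subtleties about determinations of logarithms arise.

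If a self-contained argument were wanted, I would follow Laurent's interpolation-determinant method in four steps.

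\emph{Set-up.} Form the $N\times N$ determinant $\Delta$ with entries
\[
\binom{rb_2+sb_1}{k}\,\alpha_1^{\ell r}\alpha_2^{\ell s},
\]
where $0\leq k<K$ and $0\leq\ell<L$ index the rows, and $(r,s)$ runs over the box $R\times S$, indexing the columns.

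\emph{Non-vanishing.} The two cardinality hypotheses \eqref{eq:L-1} act as a zero estimate: split the box as $R=R_1+R_2-1$, $S=S_1+S_2-1$ and apply a Vandermonde/zero-lemma argument to extract a nonzero $N\times N$ minor.

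\emph{Lower bound.} A Liouville-type bound via the product formula gives
\[
\log|\Delta|\geq -(D-1)\log|\Delta|_{\text{other places}}-\ldots,
\]
which brings in the term $gL(Ra_1+Sa_2)$. The constant $g=\tfrac14-\tfrac{N}{12RS}$ comes from averaging $r,s$ over the box, and $D(K-1)\log b$ comes from bounding the binomial coefficients, with the factorial product absorbed into $b$.

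\emph{Upper bound.} Replace $b_2\log\alpha_2$ by $b_1\log\alpha_1+\Lambda$ and view $\Delta$ as the value of an analytic function of one variable. Schwarz's lemma on a disc of radius $\varrho$ then gives
\[
\log|\Delta|\leq -K(\sigma L-1)\log\varrho+\ldots+\varepsilon(N),
\]
where $\sigma=(1+2\mu-\mu^2)/2$ arises from optimising the radius against $|\Lambda'|\leq\varrho^{-\mu KL}$. Assuming $|\Lambda'|\leq\varrho^{-\mu KL}$, the two bounds on $\log|\Delta|$ contradict \eqref{eq:L-2}.

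The main obstacle would be the analytic upper bound, in particular obtaining exactly the factor $\Lambda'$, that is, $\Lambda$ multiplied by $\frac{LS}{2b_2}e^{LS|\Lambda|/(2b_2)}$ and its analogue with $R,b_1$, together with the precise error term $\varepsilon(N)$. This requires Laurent's careful estimate of the perturbation caused by $\Lambda$ on each matrix entry. For that reason I would not redo it, and would simply cite \cite[Theorem~1]{Laurent} after the notational check above.
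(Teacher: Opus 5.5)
Your proposal is correct and matches the paper: the paper gives no argument of its own for this lemma and simply quotes it as Theorem~1 of \cite{Laurent}, so checking that the quantities in the statement agree with Laurent's normalisation is all that is required. The interpolation-determinant outline you sketch is not needed, and your account of where $g$ and $\sigma$ come from is only heuristic; citing \cite[Theorem~1]{Laurent} is sufficient.
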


So here with our linear form in \eqref{eq:lfl}, we will apply this result with
$b_{2}=p$, $\alpha_{1}=k+1$, $b_{1}=1$ and $\alpha_{2}=y/x$.

\subsubsection{Choice of parameters}

Now we need to choose the parameters to use. Instead of following Laurent's choice
in the proof of his Theorem~2 in \cite{Laurent}, we proceed as in the kit of Mignotte
and Voutier \cite{MV}.

We let $L$ be an integer and put
\begin{equation}
\label{eq:k-value}
K = \lfloor m L a_{1} a_{2} \rfloor,
\end{equation}
for a positive real number $m$. This is like the expression for $K$ in Section~3.1
of \cite{Laurent}, except we have kept the notation of \cite{MV}.

We define
\begin{align}
\label{eq:RST-values}
R_{1}    &= 1+ \lfloor c_{1} a_{2} \rfloor,
& S_{1}  &= 1+ \lfloor c_{1} a_{1} \rfloor, \\
R_{2}    &= 1+ \lfloor c_{2} a_{2} \rfloor,
& S_{2}  &= 1+ \lfloor c_{2} a_{1} \rfloor, \nonumber
\end{align}
where the parameters $c_{1}$ and $c_{2}$ will be chosen so that the conditions in
\eqref{eq:L-1} are satisfied.
The motivation for this choice of these quantities is so that both terms in
$a_{1}R+a_{2}S$ on the left-hand side of equation~\eqref{eq:L-2} are roughly
the same size, $O \left( a_{1}a_{2} \right)$, and so that the $gL \left( a_{1}R+a_{2}S \right)$
term on the left-hand side of \eqref{eq:L-2} is roughly the same size as the other
main terms on the right-hand side of \eqref{eq:L-2}, $D(K-1)\log b$.

\vspace*{3.0mm}

\noindent
$\bullet$ $c_{1}$.
We consider the first condition in \eqref{eq:L-1}. If $\alpha_{1}$ and $\alpha_{2}$
are multiplicatively independent, then the left-hand side is
$R_{1}S_{1}>c_{1}^{2}a_{1}a_{2}$. So the first condition in \eqref{eq:L-1} holds
if $c_{1} \geq c_{1}'=\sqrt{L/ \left( a_{1}a_{2} \right)}$. Put our initial values
of $R_{1}$ and $S_{1}$ to be those obtained from \eqref{eq:RST-values} with $c_{1}$
replaced by $c_{1}'$.

It will turn out that $S_{1}$ is very small in our work. In fact, we will typically
have $S_{1}=1$, using $c_{1}'$. So we can take $c_{1}$ to be smaller than $c_{1}'$.
From the first condition in \eqref{eq:L-1},
put $c_{1}=\max \left\{ \left( S_{1}-1 \right)/a_{1}, \left( \lceil L/S_{1} \rceil -1 \right)/ a_{2} \right\}$.
The first term in the max arises because we do not want to change the value of $S_{1}$.

If $\alpha_{1}$ and $\alpha_{2}$ are not multiplicatively independent, then we
use Theorem~1.3 in \cite{Ya} instead. The lower bounds obtained from this result
are better than Lemma~\ref{lem:Laurent-Thm1}. So we can assume multiplicative
independence throughout.

\vspace*{3.0mm}

\noindent
$\bullet$ $c_{2}$.
Similarly, for the second condition in \eqref{eq:L-1}, if there are no linear
relations (we will consider the possibility of linear relations below), then the
left-hand side is $R_{2}S_{2}>c_{2}^{2}a_{1}a_{2}$. So the second condition in
\eqref{eq:L-1} holds if $c_{2}>c_{2}'\sqrt{mL^{2}}$.

For our work, the $S_{i}$'s will be constants for a fixed value of $k$ (i.e.,
not dependent on $y/x$). So we can sometimes take $c_{2}$ to be smaller. This
is useful to us as $c_{2}$ (and the second condition in \eqref{eq:L-1}) is the
more important of the $c_{i}$'s for obtaining good bounds.

First, let $c_{2}''$ be the smallest positive real number such that
$\lfloor c_{2}''a_{1} \rfloor=\lfloor c_{2}'a_{1} \rfloor$.
If the fractional part of $c_{2}'a_{1}$ is near $1$, while its integer part is
small (which can occur for the later iterations), then this can make a
reasonable improvement in the final result obtained.

However, setting $R_{2}=1+\lfloor c_{2}''a_{2} \rfloor$ may violate
the second condition in \eqref{eq:L-1}, so we set
$c_{2}'''= \max \left( 1, KL/ \left( R_{2}S_{2} \right) \right) c_{2}''$. We will set
our final value of $c_{2}$ to be $c_{2}'''$.

\subsubsection{The second condition in \eqref{eq:L-1}}

Recall the second condition in \eqref{eq:L-1}:
\[
\card \left\{ rb_{2}+sb_{1}: 0 \leq r< R_{2}, 0 \leq s<S_{2} \right\} > (K-1)L.
\]

Recall that here we have $b_{1}=1$ and $b_{2}=p$.
Suppose that $\left( r_{1}, s_{1} \right)$ and $\left( r_{2}, s_{2} \right)$
are distinct pairs of integers with $r_{1}b_{2}+s_{1}b_{1}=r_{2}b_{2}+s_{2}b_{1}$,
i.e., $\left( r_{1}-r_{2} \right)p=\left( s_{2}-s_{1} \right)$.
Then $p | \left( s_{2}-s_{1} \right)$. If $s_{1}=s_{2}$, then $r_{1}=r_{2}$, so
the pairs are not distinct. Hence if $p>S_{2}$, then the elements in the set on
the left-hand side of the second condition in \eqref{eq:L-1} are distinct, as we
assumed above.

\subsubsection{Calculations}

We proceed as in \cite{MV}, writing programs in $\mathtt{PARI/GP}$ based on those in \cite{MV},
but using Lemma~\ref{lem:Laurent-Thm1}. Whereas three iterations typically
sufficed with linear forms in three logs, we found here that five iterations were
often required. For each value of $k$, we search for values of $L$, $m$, $\varrho$
and $\mu$ that lead to values of $K$, $R_{1}$, $R_{2}$, $S_{1}$ and $S_{2}$
satisfying the conditions in Lemma~\ref{lem:Laurent-Thm1} and giving an upper
bound for $p$ that is as small as possible. In Appendix~\ref{sect:A-2}, we provide
tables with the values of $L$, $m$, $\varrho$ and $\mu$ used, and the upper bound
for $p$ that they provide.

In this way, we were able to improve the upper bounds on $p$ as follows:\\
-- for $k=4$, from $p \leq 251$ to $p \leq 191$;\\
-- for $k=5$, from $p \leq 283$ to $p \leq 211$;\\
-- for $k=6$, from $p \leq 307$ to $p \leq 229$;\\
-- for $k=7$, from $p \leq 317$ to $p \leq 241$.

The functions \verb!search_it1()! and
\verb!search_it2()! that we wrote in \verb!eg-gokhan.gp!, as well as functions in
related $\mathtt{PARI/GP}$ files, were used for these calculations. 

\section{Data for using Theorem~1 of \cite{Laurent} for \eqref{eq:Thue1}}
\label{sect:A-2}

Data for using Theorem~1 of \cite{Laurent} for \eqref{eq:Thue1} for $s = 12, 14, 16, 18$
(Tables~\ref{table:thm1-k4}, \ref{table:thm1-k5}, \ref{table:thm1-k6} and \ref{table:thm1-k7}).

\begin{table}[ht!]
\centering
\begin{tabular}{|c|c|c|c|c|c|c|} \hline
iteration & \text{initial upper bound for $p$} & $L$ & $m$     & $\varrho$ &  $\mu$  & \text{new upper bound for $p$} \\ \hline
$1$       & $256$                              & $5$ & $0.075$  &    $21.0$   & $0.475$ &  $209$ \\ \hline
$2$       & $209$                              & $5$ & $0.070$  &   $18.000$  & $0.575$ &  $196$ \\ \hline
$3$       & $196$                              & $5$ & $0.085$  &   $17.000$  & $0.500$ &  $193$ \\ \hline
$4$       & $193$                              & $5$ & $0.0695$ &   $17.975$  & $0.566$ &  $191$ \\ \hline
\end{tabular}
\caption{Data for $k=4$}
\label{table:thm1-k4}
\end{table}

\begin{table}[ht!]
\centering
\begin{tabular}{|c|c|c|c|c|c|c|} \hline
iteration & \text{initial upper bound for $p$} & $L$ & $m$     & $\varrho$ &  $\mu$  & \text{new upper bound for $p$} \\ \hline
$1$       & $284$                              & $5$ & $0.075$ &   $18.500$  & $0.55$  &  $231$ \\ \hline
$2$       & $231$                              & $5$ & $0.070$ &   $18.000$  & $0.575$ &  $218$ \\ \hline
$3$       & $218$                              & $5$ & $0.060$ &   $21.000$  & $0.550$ &  $216$ \\ \hline
$4$       & $216$                              & $5$ & $0.075$ &   $18.000$  & $0.525$ &  $214$ \\ \hline
$5$       & $214$                              & $5$ & $0.075$ &   $18.000$  & $0.524$ &  $213$ \\ \hline
\end{tabular}
\caption{Data for $k=5$}
\label{table:thm1-k5}
\end{table}

\begin{table}[ht!]
\centering
\begin{tabular}{|c|c|c|c|c|c|c|} \hline
iteration & \text{initial upper bound for $p$} & $L$ & $m$     & $\varrho$ &  $\mu$  & \text{new upper bound for $p$} \\ \hline
$1$       & $307$                              & $5$ & $0.080$ &   $19.000$  &  $0.5$  &  $252$ \\ \hline
$2$       & $252$                              & $5$ & $0.085$ &   $18.000$  & $0.475$ &  $238$ \\ \hline
$3$       & $238$                              & $5$ & $0.080$ &   $17.500$  & $0.550$ &  $234$ \\ \hline
$4$       & $234$                              & $5$ & $0.070$ &   $18.850$  & $0.568$ &  $232$ \\ \hline
$5$       & $232$                              & $5$ & $0.071$ &   $18.025$  & $0.552$ &  $231$ \\ \hline
\end{tabular}
\caption{Data for $k=6$}
\label{table:thm1-k6}
\end{table}

\begin{table}[ht!]
\centering
\begin{tabular}{|c|c|c|c|c|c|c|} \hline
iteration & \text{initial upper bound for $p$} & $L$ & $m$     & $\varrho$ &  $\mu$  & \text{new upper bound for $p$} \\ \hline
$1$       & $327$                              & $5$ & $0.075$ &   $20.0$  &  $0.5$  &  $268$ \\ \hline
$2$       & $268$                              & $5$ & $0.070$ &   $18.0$  & $0.575$ &  $254$ \\ \hline
$3$       & $254$                              & $5$ & $0.085$ &   $17.0$  &  $0.5$  &  $250$ \\ \hline
$4$       & $250$                              & $5$ & $0.075$ &   $18.0$  & $0.525$ &  $248$ \\ \hline
$5$       & $248$                              & $5$ & $0.070$ &   $19.0$  & $0.526$ &  $247$ \\ \hline
\end{tabular}
\caption{Data for $k=7$}
\label{table:thm1-k7}
\end{table}

Andrzej D\k{a}browski, Institute of Mathematics, University of Szczecin, Wielkopolska 15, 
70-451 Szczecin, Poland,  

 \begin{tabular}{rl}
E-mail: & dabrowskiandrzej7@gmail.com\\ 
& andrzej.dabrowski@usz.edu.pl 
\end{tabular}

\bigskip 

Salah Eddine Rihane, National Higher School of Mathematics, Scientific and Technology Hub of Sidi Abdellah, P.O. Box 75, Algiers 16093, Algeria,

E-mail:  salahrihane@hotmail.fr 

\bigskip 

G\"okhan Soydan, Department of Mathematics, Bursa Uluda\u{g} University, 16059 Bursa, T\"{u}rkiye,  

E-mail:  gsoydan@uludag.edu.tr 

\bigskip 

Paul Voutier, London, UK, 

E-mail: Paul.Voutier@gmail.com 

\begin{thebibliography}{999}
\bibitem{BM}
B. Bartolom\'{e}, P. Mih\u{a}ilescu,
{\it On equation $X^{n} - 1 = BZ^{n}$},
Int. J. Number Theory \textbf{13}(3) (2017), 549--570.  

\bibitem{Ben}
M.A. Bennett,
{\it Rational approximation to algebraic numbers of small height: the Diophantine equation $|ax^{n} + by^{n}| = 1$},
J. reine angew. Math. \textbf{535} (2001), 1--49.

\bibitem{BGMP}
M.A. Bennett, K. Gy\H{o}ry, M. Mignotte and \'{A}. Pint\'{e}r,
{\it Binomial Thue equations and polynomial powers},
Compos. Math. \textbf{142} (2006), 1103--1121.

\bibitem{BS}
M.A. Bennett and C.M. Skinner,
{\it Ternary diophantine equations via Galois representations and modular forms},
Canad. J. Math. \textbf{56}(1) (2004), 23--54.

\bibitem{BVY}
M.A. Bennett, V. Vatsal and S. Yazdani,
{\it Ternary Diophantine equations of signature $(p,p,3)$},
Compos. Math. \textbf{140} (2004), 1399--1416.

\bibitem{Magma}
W. Bosma, J. Cannon and C. Playoust,
{\it The Magma Algebra System I. The user language},
J. Symbolic Comput. \textbf{24} (1997), no. 3-4, 235--265.

\bibitem{Browkin}
J. Browkin,
{\it A weak effective $abc$-conjecture},
Funct. Approx. \textbf{39} (2008), 103--111.

\bibitem{BMS}
Y. Bugeaud, M. Mignotte and S. Siksek,
{\it A multi-Frey approach to some multi-parameter families of Diophantine equations},
Canad. J. Math. \textbf{60}(3) (2008), 491--519.

\bibitem{Cohen}
H. Cohen,
{\it Number Theory Volume I: Tools and Diophantine Equations},
Springer, Berlin, 2007.



\bibitem{DD}
E. Deza and M. M. Deza,
{\it Figurate Numbers},
World Scientific Publishing Co. Pte. Ltd., Hackensack, NJ, 2012.

\bibitem{D}
L.-E. Dickson,
{\it History of the Theory of Numbers. Vol. II: Diophantine Analysis},
Chelsea Publishing Co., New York, 1966.

\bibitem{Evertse}
J.-H. Evertse,
{\it On the equation $ax^{n} - by^{n} = c$},
Compos. Math. \textbf{47}(3) (1982), 289--315.

\bibitem{GDP}
K. Gy\H{o}ry, A. Dujella and \'{A}. Pint\'{e}r,
{\it On the power values of pyramidal numbers,I},
Acta Arith. \textbf{155(2)} (2012), 217--226.

\bibitem{GP}
K. Gy\H{o}ry and \'{A}. Pint\'{e}r,
{\it On the resolution of equations $Ax^{n} - By^{n} = C$ in integers $x, y$ and $n\geq 3$, I},
Publ. Math. Debrecen \textbf{70} (2007), 483--501.

\bibitem{HPTV}
L. Hajdu, \'{A}. Pint\'{e}r, S. Tengely and N. Varga,
{\it Equal values of figurate numbers},
Journal of Number Theory \textbf{137} (2014), 130--141.

\bibitem{HPTV-2}
B. He, \'{A}. Pint\'{e}r, A. Togb\'{e} and N. Varga,
{\it A Generalization of a problem of Mordell},
Glasnik Matemati\v{c}ki \textbf{50(70)} (2015), 35--41.

\bibitem{IR}
K. Ireland and M. Rosen,
{\it A Classical Introduction to Modern Number Theory},
2nd edition, Springer, 1990.

\bibitem{IK}
W. Ivorra and A. Kraus,
{\it Quelques r\'{e}sultats sur les \'{e}quations $ax^{p}+by^{p}=cz^{2}$},
Canad. J. Math. \textbf{58} (2006), 115--153.

\bibitem{KT}
M. Kaneko and K. Tachibana,
{\it When is a polygonal pyramid number again polygonal?},
Rocky Mountain J. Math. \textbf{32} (2002), 149--165.

\bibitem{KPP}
D. Kim, Y. K. Park and \'{A}. Pint\'{e}r,
{\it A Diophantine problem concerning polygonal numbers},
Bull. Aust. Math. Soc. \textbf{88} (2013), 345--350.

\bibitem{KR}
T. Kov\'{a}cs and Z. R\'{a}bai,
{\it Equal values of pyramidal numbers},
Indagationes Mathematicae \textbf{29(5)} (2018), 1157--1166.

\bibitem{Laurent}
M. Laurent,
{\it Linear forms in two logarithms and interpolation determinants II},
Acta Arith. \textbf{133}(4) (2008), 325--348.

\bibitem{M-R}
P. Michaud-Rodgers,
{\it A unique perfect power decagonal number},
Bull. Aust. Math. Soc. \textbf{105} (2022), 212--216.

\bibitem{Mi1}
M. Mignotte,
{\it A note on the equation $ax^{n} - by^{n} = c$},
Acta Arith. \textbf{75} (1996), 287--295.

\bibitem{MV}
M. Mignotte and P. Voutier (with an appendix by Michel Laurent),
{\it A kit for linear forms in three logarithms},
Math. Comp. \textbf{93} (2024), 1903--1951.

\bibitem{Pari}
The PARI Group,
$\mathtt{PARI/GP}$ version 2.14.0, Univ. Bordeaux, 2021,
\url{http://pari.math.u-bordeaux.fr/}.


\bibitem{TdW}
N. Tzanakis and B. M. M. de Weger,
{\it On the practical solution of the Thue equation},
J. Number Theory \textbf{31} (1989), 99--132.

\bibitem{Wald}
M. Waldschmidt,
{\it Diophantine Approximation on Linear Algebraic Groups},
Springer, Berlin, 2000.

\bibitem{Ya}
T. Yamada,
{\it A note on Laurent's paper on linear forms in two logarithms: the argument of an algebraic power},
Acta Arith. \textbf{221} (2025), 153--163.
\end{thebibliography}
\end{document}